\documentclass[reqno]{amsart}
\setlength{\textheight}{23cm}
\setlength{\textwidth}{16cm}
\setlength{\topmargin}{-0.8cm}
\setlength{\parskip}{0.3\baselineskip}
\hoffset=-1.3cm

\usepackage{amsfonts,amssymb,verbatim,amsmath,amsthm,latexsym,textcomp,amscd}
\usepackage{latexsym,amsfonts,amssymb,epsfig,verbatim}
\usepackage{amsmath,amsthm,amssymb,latexsym,graphics,textcomp}
\usepackage{graphicx}
\usepackage{color}
\usepackage{url}
\usepackage{psfrag}


\begin{document}

\newtheorem{theorem}{Theorem}[section]
\newtheorem{prop}[theorem]{Proposition}
\newtheorem{lemma}[theorem]{Lemma}
\newtheorem{cor}[theorem]{Corollary}
\newtheorem{definition}[theorem]{Definition}
\newtheorem{conj}[theorem]{Conjecture}
\newtheorem{claim}[theorem]{Claim}
\newtheorem{qn}[theorem]{Question}
\newtheorem{defn}[theorem]{Definition}
\newtheorem{defth}[theorem]{Definition-Theorem}
\newtheorem{obs}[theorem]{Observation}
\newtheorem{rmk}[theorem]{Remark}
\newtheorem{ans}[theorem]{Answers}
\newtheorem{slogan}[theorem]{Slogan}

\newcommand{\boundary}{\partial}
\newcommand{\hhat}{\widehat}
\newcommand{\C}{{\mathbb C}}
\newcommand{\Ga}{{\Gamma}}
\newcommand{\G}{{\Gamma}}
\newcommand{\s}{{\Sigma}}
\newcommand{\PSL}{{PSL_2 (\mathbb{C})}}
\newcommand{\pslc}{{PSL_2 (\mathbb{C})}}
\newcommand{\pslr}{{PSL_2 (\mathbb{R})}}
\newcommand{\Gr}{{\mathcal G}}
\newcommand{\integers}{{\mathbb Z}}
\newcommand{\natls}{{\mathbb N}}
\newcommand{\ratls}{{\mathbb Q}}
\newcommand{\reals}{{\mathbb R}}
\newcommand{\proj}{{\mathbb P}}
\newcommand{\lhp}{{\mathbb L}}
\newcommand{\tube}{{\mathbb T}}
\newcommand{\cusp}{{\mathbb P}}
\newcommand\AAA{{\mathcal A}}
\newcommand\HHH{{\mathbb H}}
\newcommand\BB{{\mathcal B}}
\newcommand\CC{{\mathcal C}}
\newcommand\DD{{\mathcal D}}
\newcommand\EE{{\mathcal E}}
\newcommand\FF{{\mathcal F}}
\newcommand\GG{{\mathcal G}}
\newcommand\HH{{\mathcal H}}
\newcommand\II{{\mathcal I}}
\newcommand\JJ{{\mathcal J}}
\newcommand\KK{{\mathcal K}}
\newcommand\LL{{\mathcal L}}
\newcommand\MM{{\mathcal M}}
\newcommand\NN{{\mathcal N}}
\newcommand\OO{{\mathcal O}}
\newcommand\PP{{\mathcal P}}
\newcommand\QQ{{\mathcal Q}}
\newcommand\RR{{\mathcal R}}
\newcommand\SSS{{\mathcal S}}
\newcommand\TT{{\mathcal T}}
\newcommand\UU{{\mathcal U}}
\newcommand\VV{{\mathcal V}}
\newcommand\WW{{\mathcal W}}
\newcommand\XX{{\mathcal X}}
\newcommand\YY{{\mathcal Y}}
\newcommand\ZZ{{\mathcal Z}}
\newcommand\CH{{\CC\Hyp}}
\newcommand{\Chat}{{\hat {\mathbb C}}}
\newcommand\MF{{\MM\FF}}
\newcommand\PMF{{\PP\kern-2pt\MM\FF}}
\newcommand\ML{{\MM\LL}}
\newcommand\PML{{\PP\kern-2pt\MM\LL}}
\newcommand\GL{{\GG\LL}}
\newcommand\Pol{{\mathcal P}}
\newcommand\half{{\textstyle{\frac12}}}
\newcommand\Half{{\frac12}}
\newcommand\Mod{\operatorname{Mod}}
\newcommand\Area{\operatorname{Area}}
\newcommand\ep{\epsilon}
\newcommand\Hypat{\widehat}
\newcommand\Proj{{\mathbf P}}
\newcommand\U{{\mathbf U}}
 \newcommand\Hyp{{\mathbf H}}
\newcommand\D{{\mathbf D}}
\newcommand\Z{{\mathbb Z}}
\newcommand\R{{\mathbb R}}
\newcommand\Q{{\mathbb Q}}
\newcommand\E{{\mathbb E}}
\newcommand\EXH{{ \EE (X, \HH_X )}}
\newcommand\EYH{{ \EE (Y, \HH_Y )}}
\newcommand\GXH{{ \GG (X, \HH_X )}}
\newcommand\GYH{{ \GG (Y, \HH_Y )}}
\newcommand\ATF{{ \AAA \TT \FF }}
\newcommand\PEX{{\PP\EE  (X, \HH , \GG , \LL )}}
\newcommand{\lct}{\Lambda_{CT}}
\newcommand{\lel}{\Lambda_{EL}}
\newcommand{\lgel}{\Lambda_{GEL}}
\newcommand{\lre}{\Lambda_{\mathbb{R}}}

\newcommand\til{\widetilde}
\newcommand\length{\operatorname{length}}
\newcommand\tr{\operatorname{tr}}
\newcommand\gesim{\succ}
\newcommand\lesim{\prec}
\newcommand\simle{\lesim}
\newcommand\simge{\gesim}
\newcommand{\simmult}{\asymp}
\newcommand{\simadd}{\mathrel{\overset{\text{\tiny $+$}}{\sim}}}
\newcommand{\ssm}{\setminus}
\newcommand{\diam}{\operatorname{diam}}
\newcommand{\pair}[1]{\langle #1\rangle}
\newcommand{\T}{{\mathbf T}}
\newcommand{\I}{{\mathbf I}}

\newcommand{\tw}{\operatorname{tw}}
\newcommand{\base}{\operatorname{base}}
\newcommand{\trans}{\operatorname{trans}}
\newcommand{\rest}{|_}
\newcommand{\bbar}{\overline}
\newcommand{\UML}{\operatorname{\UU\MM\LL}}
\newcommand{\EL}{\mathcal{EL}}
\newcommand{\qle}{\lesssim}

\newcommand\Gomega{\Omega_\Gamma}
\newcommand\nomega{\omega_\nu}

\title{Cannon-Thurston Maps}

\author{Mahan Mj}
\address{School
of Mathematics, Tata Institute of Fundamental Research. 1, Homi Bhabha Road, Mumbai-400005, India}

\email{mahan@math.tifr.res.in}

\thanks{Research partly supported by a DST JC Bose Fellowship.}
\subjclass[2010]{57M50, 30F40 (Primary), 20F65, 20F67 (Secondary)}

\date{\today}

 \begin{abstract}
	We give an overview of the theory of Cannon-Thurston maps which  forms one of the  links between the complex analytic and  hyperbolic geometric study of Kleinian groups. We also briefly sketch connections to hyperbolic subgroups of hyperbolic groups and end with some open questions.
\end{abstract}

\maketitle

\section{Kleinian groups and limit sets}\label{kg} The Lie group $\PSL$ can be viewed from three different points of view:
\begin{enumerate}
	\item As a Lie group or a matrix group ({\bf group-theoretic}).
	\item As the isometry group of hyperbolic 3-space $\Hyp^3$--the upper half space $\{(x,y,z): z>0\}$ equipped with the metric   $ds^2 = \frac{dx^2 + dy^2 + dz^2}{z^2}$, or equivalently the open ball
	$\{(x,y,z): (x^2 + y^2+z^2) = r^2 < 1\}$ equipped with the metric   $ds^2 = \frac{4(dx^2 + dy^2 + dz^2)}{(1-r^2)^2}$ ({\bf geometric}).
	\item As the group of M\"obius transformations   of the Riemann sphere $\hat{\mathbb{C}}$ ({\bf complex dynamic/analytic}).
\end{enumerate}

A finitely generated discrete subgroup $\Ga \subset \PSL$ is called a {\bf Kleinian group}. Depending on how we decide to look at $\PSL$, the group $\Gamma$ can  accordingly be thought of as a discrete subgroup of a Lie group, as the fundamental  group of the complete hyperbolic 3-manifold $\Hyp^3/\Gamma$, or in terms of its action by  holomorphic automorphisms  of  $\hat{\mathbb{C}}$. If $\Ga$ is not virtually abelian, it is called {\bf non-elementary}. Henceforth, unless explicitly stated otherwise, we shall assume that {\it all Kleinian groups in this article are non-elementary}. If $\Gamma$ can be conjugated by an element of $\pslc$ to be contained in $\pslr$ it is referred to as a {\bf Fuchsian group}\footnote{Both Fuchsian and Kleinian groups were discovered by Poincar\'e.}. If $\Gamma$ is abstractly isomorphic to $\pi_1(S)$, the fundamental group of a closed surface $S$, we shall refer to it as a {\bf surface Kleinian group}.

In the 1960's Ahlfors and Bers studied deformations of Fuchsian surface groups in  $\PSL$, giving rise to the theory of  quasi-Fuchsian groups. Their techniques were largely complex analytic in nature. 
In the 1970's and 80's, the field was revolutionized by Thurston, who brought in a rich and varied set of techniques from three-dimensional hyperbolic geometry.   A conjectural picture of the deep relationships  between the analytic and geometric points of view was outlined by Thurston in his visionary paper \cite{thurston-bams}.
Perhaps the most well-known problem (predating Thurston) in this line of study was the Ahlfors' measure zero conjecture, resolved in the last decade by Brock-Canary-Minsky \cite{minsky-elc2}. Another (more topological) well-known problem that also predates Thurston asks if limit sets of Kleinian groups are locally connected. This is the specific problem that will concern us here. We need to fix some  terminology and notation first. Identify
the Riemann sphere $\hat{\mathbb{C}}$  with the sphere at infinity $S^2$ of $\Hyp^3$.  Thus, $S^2$ encodes the `ideal' boundary of ${\mathbb{H}}^3$, consisting of asymptote classes of geodesics. 
By adjoining $S^2$ to $\Hyp^3$, we obtain the closed 3-ball $\D^3$. The topology on $S^2$ is the usual one induced by the round
metric given by the   angle subtended at the origin $0 \in \D^3$.  The
geodesics turn out to be semicircles meeting the boundary $S^2$ at right angles.

\begin{definition}
The {\bf limit set} $\Lambda_\Ga$ of the Kleinian group $\Gamma$ is the collection of accumulation points of a $\Gamma$-orbit $\Gamma\cdot z$ for some (any) $z \in \hat{\mathbb{C}}$.
\end{definition}
 The limit set  $\Lambda_\Gamma$ is independent of $z$ and may be regarded as the locus of chaotic dynamics of $\Ga$ on $\hat{\mathbb{C}}$. For non-elementary $\Ga$ and any $z \in \Lambda_\Gamma$, $\Gamma \cdot z$ is dense  in $\Lambda_\Gamma$.  Hence $\Lambda_\Gamma$ is the smallest closed non-empty $\Ga-$invariant subset of $ \hat{\mathbb{C}}$.
 If we take $z \in \Hyp^3$ instead, then the collection of accumulation points of any $\Gamma$-orbit $\Gamma\cdot z \subset \D^3$ is also $\Lambda_\Ga$.

 \begin{definition}
 The complement of the limit set $\hat{\mathbb{C}} \setminus \Lambda_\Ga$ is called the domain of discontinuity $\Omega_\Ga$ of $\Ga$. 
\end{definition}
The Kleinian group $\Ga$ acts freely and properly discontinuously on $\Omega_\Ga$ and  $\Omega_\Ga/\Ga$ is a (possibly disconnected) Riemann surface. 

\subsection{Fuchsian and Quasi-Fuchsian Groups} We first give an explicit example of a Fuchsian group.

\noindent {\bf An example of a Fuchsian group:} Consider the standard identification space description of the genus two orientable surface $\Sigma_2$ as
 an octagon with  edge labels $a_1, b_1, a_1^{-1}, b_1^{-1}, a_2, b_2,
a_2^{-1}, b_2^{-1}$. A hyperbolic metric 
 on $\Sigma_2$ is one where   each point has a small neighborhood isometric to a small disk in 
$\Hyp^2$. By the Poincar\'e  polygon theorem, it suffices to find  a regular hyperbolic octagon (with all sides equal and all
angles equal) with each interior angle equal to $\frac{2\pi}{8} $. Now, the infinitesimal regular
octagon at the tangent space to the origin has interior angles equal to $\frac{3\pi}{4} $. Also the ideal 
regular
octagon in ${\mathbb{H}}^2$ has all interior angles zero. See figure below.

\begin{center}
	
	\includegraphics[height=5cm]{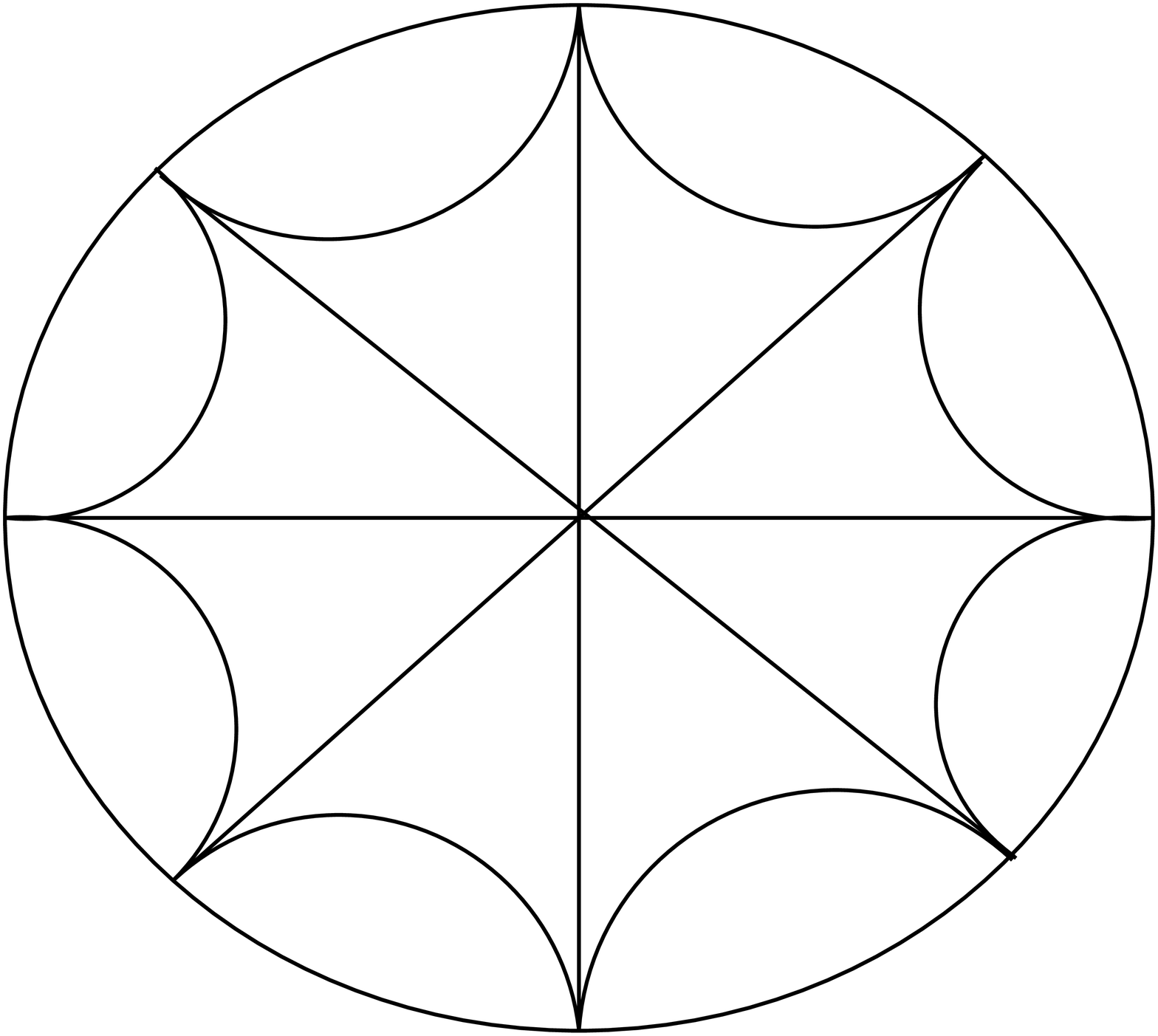}

\end{center}

By the Intermediate Value Theorem, there exists an intermediate regular octagon
with all interior angles  equal to $\frac{\pi}{4} $. The group $G$ that results from side-pairing transformations
corresponds to a Fuchsian group, or equivalently, a discrete faithful representation $\rho$ of $\pi_1(\Sigma_2)$ into $\pslr$. Equivalently we may 
regard  $\rho$ as a representation of $\pi_1(\Sigma_2)$ into $\pslc$ with image that can be conjugated to lie in $\pslr$. Alternately, $\rho(\pi_1(\Sigma_2))$ preserves a totally geodesic plane in $\Hyp^3$. The limit set of $G=\rho(\pi_1(\Sigma_2))$ is then a round circle.\\

\noindent {\bf Quasi-Fuchsian groups:} If we require the limit set to be only topologically a circle, i.e.\  a Jordan curve, then we obtain a more general class of Kleinian groups:

\begin{defn}
Let $\rho: \pi_1(S) \to \pslc$ be a discrete faithful representation such that the limit set of $G= \rho(\pi_1(S))$ is a Jordan curve in $S^2$. Then $G$ is said to be {\bf quasi-Fuchsian}. The collection of conjugacy classes of quasi-Fuchsian with the complex analytic structure inherited from $\pslc$ is denoted as $QF(S)$. 
\end{defn}

The domain of discontinuity $\Omega$ of a  quasi-Fuchsian $G$ consists of two open invariant topological disks $\Omega_1, \Omega_2$ in $\hat{\mathbb C}$. Hence the quotient $\Omega/G$ is the disjoint union $\Omega_1/G \sqcup \Omega_2/G$ and we have a map $\tau: QF(S) \to Teich(S) \times Teich(S)$, where $Teich(S)$ denotes the Teichm\"uller space of $S$--the space of marked hyperbolic (or complex) structures on $S$.
The { \bf Bers simultaneous Uniformization Theorem} asserts:

\begin{theorem}
$\tau: QF(S) \to Teich(S) \times Teich(S)$ is bi-holomorphic.
\end{theorem}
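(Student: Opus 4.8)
The plan is to prove both injectivity and surjectivity of $\tau$, then upgrade to a biholomorphism using a holomorphic-dependence argument. For surjectivity, given a pair $(X_1, X_2) \in Teich(S) \times Teich(S)$, I would first realize each $X_i$ as a quotient of a disk by a Fuchsian group; the key construction is to build a quasiconformal map of $\hat{\mathbb C}$ that conjugates a fixed Fuchsian uniformization $G_0$ of $S$ (say the one built from the octagon above, for $S = \Sigma_2$, and its analogues in general) to a new Kleinian group whose two quotient surfaces are exactly $X_1$ and $X_2$. Concretely, I would take the Beltrami differentials $\mu_i$ on the upper and lower half-planes corresponding to the Teichm\"uller points $X_1, X_2$ (each $\mu_i$ being $G_0$-invariant on its half-plane), glue them to a single $G_0$-invariant Beltrami differential $\mu$ on $\hat{\mathbb C}$ with $\|\mu\|_\infty < 1$, and invoke the Measurable Riemann Mapping Theorem to obtain a quasiconformal $w^\mu \colon \hat{\mathbb C} \to \hat{\mathbb C}$. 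Then $G := w^\mu G_0 (w^\mu)^{-1}$ is discrete and faithful, its limit set is $w^\mu(\partial \HHH^2)$, which is a Jordan curve since $w^\mu$ is a homeomorphism of $S^2$, so $G$ is quasi-Fuchsian, and by construction $\tau(G) = (X_1, X_2)$.

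For injectivity, suppose $\tau(G) = \tau(G')$ with $G, G'$ quasi-Fuchsian. Then there are conformal maps between the corresponding components of $\Omega_G/G$ and $\Omega_{G'}/G'$ respecting markings; lifting these to the universal covers $\Omega_1, \Omega_2$ (each a topological disk, by the remark preceding the theorem) gives conformal identifications that intertwine the actions of $G$ and $G'$ on the domain of discontinuity. The task is then to show that a conformal conjugacy on $\Omega$ extends to a M\"obius conjugacy on all of $\hat{\mathbb C}$. One argues that the two equivariant Riemann maps glue along the limit set: a quasiconformal conjugacy of $G$ to $G'$ that is conformal on $\Omega_G$ has Beltrami coefficient supported on $\Lambda_G$, which has measure zero here (the limit set being a quasicircle — this is where one uses that the groups are quasi-Fuchsian rather than arbitrary), so the conjugating map is in fact conformal on all of $\hat{\mathbb C}$, hence M\"obius, and $G$ and $G'$ are conjugate in $\pslc$. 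Thus $\tau$ is injective.

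It remains to check that $\tau$ and $\tau^{-1}$ are holomorphic. That $\tau$ is holomorphic follows from the holomorphic dependence of the solution $w^\mu$ of the Beltrami equation on the parameter $\mu$ (Ahlfors--Bers), together with the fact that the period/uniformization maps defining $Teich(S)$ are holomorphic; one checks that moving in $QF(S)$ in its natural complex structure (inherited from the representation variety into $\pslc$) moves the pair of conformal structures holomorphically. For the inverse, the surjectivity construction above already exhibits a holomorphic local section through each point: the map $(\mu_1, \mu_2) \mapsto w^\mu G_0 (w^\mu)^{-1}$ depends holomorphically on the Beltrami data, and Teichm\"uller space is itself cut out as a quotient of the unit ball of Beltrami differentials by this construction, so descending gives a holomorphic right inverse to $\tau$; combined with bijectivity this forces $\tau$ to be biholomorphic.

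The main obstacle I anticipate is the injectivity step — specifically, promoting a conformal conjugacy on the domain of discontinuity to a global M\"obius conjugacy. This genuinely uses that $\Lambda_G$ is a quasicircle of measure zero (so that a $G$-to-$G'$ quasiconformal conjugacy which is conformal off $\Lambda_G$ is globally conformal), a fact that is special to the quasi-Fuchsian setting and fails for general Kleinian groups; making this rigorous requires the removability/measure-zero input and a careful verification that the equivariant maps on the two disk components are restrictions of a single well-defined homeomorphism of $S^2$.
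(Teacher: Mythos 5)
The paper offers no proof of this statement to compare against: it is quoted as the classical Bers Simultaneous Uniformization Theorem. Your sketch is the standard Ahlfors--Bers argument -- glue $G_0$-invariant Beltrami differentials on the two half-planes, solve the Beltrami equation, and conjugate the Fuchsian group for surjectivity; use measure zero/removability of the limit set for injectivity; use holomorphic dependence of $w^\mu$ on $\mu$ for the complex-analytic statement -- and in outline it is correct, with the genuinely delicate step (matching the two equivariant conformal maps across the limit set) correctly identified by you as the main obstacle.

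Two points should be made explicit before this counts as a complete argument. First, the paper defines quasi-Fuchsian by the condition that $\Lambda_G$ is a Jordan curve, whereas your injectivity step needs $\Lambda_G$ to be a quasicircle (hence of zero area and conformally removable). That a discrete faithful surface Kleinian group with Jordan-curve limit set is a quasiconformal deformation of a Fuchsian group is itself a theorem (Bers--Maskit), not part of this definition; without it your argument only proves injectivity on the subclass of quasiconformal deformations, and the hypothesis cannot be waved away, since general Jordan curves can have positive area and need not be removable. Second, producing a global quasiconformal conjugacy that is conformal on $\Omega_G$ requires more than lifting the conformal maps of the quotient surfaces: one must show the lifted maps on $\Omega_1,\Omega_2$ and the ambient quasiconformal conjugacy share boundary values on $\Lambda_G$. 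The standard inputs here are either the lemma that Teichm\"uller-equivalent Beltrami coefficients give normalized solutions of the Beltrami equation with identical boundary values, or equivariant Carath\'eodory extension together with density of fixed points of loxodromic elements in $\Lambda_G$, which pins down the boundary map. (A minor convention: the second factor is naturally $Teich(\bar{S})$, the lower half-plane quotient carrying the reversed orientation.) With these ingredients supplied, your proposal is the standard complete proof, and the final biholomorphy step via holomorphic dependence plus invariance-of-domain-type reasoning for equidimensional holomorphic bijections is fine.
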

Thus, given any two conformal structures $T_1, T_2$ on a surface, there is a unique discrete quasi-Fuchsian  $G$ (up to conjugacy) whose limit set $\Lambda_G$ is  topologically a circle, and the quotient of whose domain
of discontinuity is $T_1 \sqcup T_2$. See figure below \cite{kabaya}, where the inside and the outside of the Jordan curve correspond to $\Omega_1, \Omega_2$.

\begin{center}
	
	\includegraphics[height=5cm]{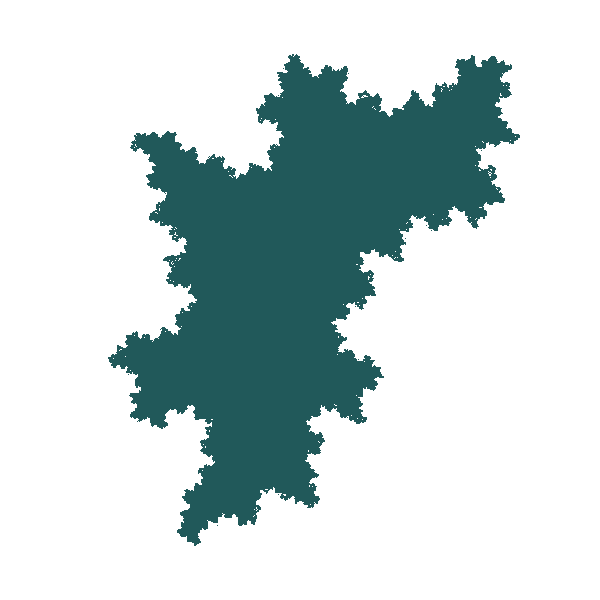}

\end{center}

\subsection{Degenerate groups and the Ending Lamination Theorem} \label{elt}
Quasi-Fuchsian groups were studied by Ahlfors and Bers analytically as deformations of Fuchsian groups. Thurston  \cite{thurstonnotes} introduced  a new set of geometric techniques in the study.

\begin{defn}
The {\bf convex hull} $CH_G$ of  $\Lambda_G$ is the smallest non-empty closed convex subset of ${\mathbb{H}}^3$ invariant under $G$. 

Let $M = \Hyp^3/G$.
The quotient of $CH_G$ by $G$ is
called the {\it convex core $CC(M)$} of $M$.
\end{defn}

The  convex hull $CH_G$ can be constructed by joining all distinct pairs of points on $\Lambda_G$ by bi-infinite geodesics, iterating this
construction and finally taking the closure. It can also be described as the closure of the union of all ideal tetrahedra, whose vertices lie in $\Lambda_G$. The convex core $CC(M)$ is homeomorphic to $S \times [0,1]$.

The distance between the boundary components $S \times \{ 0 \}$
and $S \times \{ 1 \}$ in the convex core $CC(M)$, measured with respect to the hyperbolic metric, is a crude geometric measure of the complexity of the quasi Fuchsian group $G$. We shall call it the {\bf thickness} $t_G$ of $CC(M)$, or of the quasi Fuchsian group $G$.  (We note here parenthetically that the notions of convex hull $CH_G$ and convex core $CC(M)$ go through for any Kleinian group $G$ and the associated complete hyperbolic manifold $\Hyp^3/G$.) For quasi-Fuchsian groups, we ask:

\begin{qn}\label{thickness}
	What happens as thickness tends to infinity?
\end{qn}

To address this question more precisely we need to introduce a topology on the space of representations.
\begin{definition}\label{def-altop}
A sequence of representations $\rho_n : \pi_1(S) \to \pslc$ is said to converge {\bf algebraically} to $\rho_\infty : \pi_1(S) \to \pslc$ if for all $g \in \pi_1(S)$, $\rho_n (g) \to \rho_\infty(g)$ in $\pslc$. The collection of conjugacy classes of discrete faithful representations of  $\pi_1(S)$ into $\pslc$ equipped with the algebraic topology is denoted as $AH(S)$.
\end{definition}

It is not even clear a priori that, as $t_G$ tends to infinity, limits exist in $AH(S)$. However, Thurston's 
{\bf Double Limit Theorem} \cite{thurston-hypstr2, kapovich-book, otal-book} guarantees that if we have a sequence $G_n$ with thickness $t_{G_n}$ tending to infinity, subsequential limits (in the space of discrete faithful representations with a suitable topology, see Definition \ref{topofconv})  do in fact exist.\\

\noindent {\bf Geodesic Laminations:}

\begin{defn} A geodesic lamination on a hyperbolic surface is a foliation of a closed subset with geodesics, i.e.\ it is a closed set given as a  disjoint union of geodesics (closed or bi-infinite). \end{defn} A geodesic lamination on a surface may further be equipped with a transverse (non-negative) measure to obtain a {\bf measured lamination}. The space of measured (geodesic) laminations on $S$ is then a positive cone in a vector space and is denoted as $\ML (S)$. It can be projectivized to obtain the space of projectivized measured  laminations $\PML(S)$. It was shown by Thurston \cite{FLP} that $\PML(S)$ is homeomorphic to a sphere and can be adjoined to $Teich(S)$ compactifying it to a closed ball.

\begin{defn} \cite[Definition 8.8.1]{thurstonnotes} A {\bf pleated surface} in a hyperbolic three-manifold $N$ is
	a complete hyperbolic surface $S$ of finite area, together with an isometric map $f :
	S \to N$ such that every $x \in S$ is in the interior of some geodesic segment (in $S$) which
	is mapped by $f$ to a geodesic segment (in $N$). Also, $f$ must take every cusp (corresponding to a maximal parabolic subgroup) of $S$ to a
	cusp of $N$
	
	The {\bf pleating locus} of the pleated surface $f: S \to M$  is the set $\gamma \subset S$ consisting of those points in the pleated surface which are in the
	interior of unique geodesic segments mapped to geodesic segments.
\end{defn}

\begin{prop} \cite[Proposition 8.8.2]{thurstonnotes} The pleating locus $\gamma$
	is a geodesic lamination on $S$. The map $f$ is totally geodesic in
	the complement of 
	$\gamma$. \end{prop}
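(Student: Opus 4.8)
The plan is to reduce the whole statement to one local assertion. For $x\in S$ let $D_x$ denote the set of unit tangent vectors $v$ at $x$ such that the geodesic of $S$ issuing from $x$ with velocity $v$ is mapped by $f$, on a neighbourhood of $x$, onto a geodesic of $N$. Since $f$ is an isometric map of hyperbolic surfaces it is $1$-Lipschitz and sends every geodesic segment of $S$ to a path in $N$ of the same length, such a path being a geodesic exactly when its endpoints realise that distance in $N$; from this $D_x$ is readily seen to be a closed, symmetric subset of the unit circle in $T_xS$, and it is non-empty by the definition of a pleated surface. Unwinding the definition of the pleating locus, $x\in\gamma$ exactly when $D_x=\{v,-v\}$ for a single $v$, and $x\notin\gamma$ exactly when $D_x$ contains two non-opposite vectors. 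Hence the proposition follows from the single assertion that \emph{if $D_x$ contains two non-opposite vectors then $f$ is totally geodesic on a neighbourhood of $x$} (so that in fact $D_x$ is the whole circle): this makes $S\setminus\gamma$ open, hence $\gamma$ closed, and shows $f$ is totally geodesic on $S\setminus\gamma$.

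The engine is a rigidity lemma I would establish first: \emph{if $p,q,r\in S$ are close enough that the filled geodesic triangle $T=T(p,q,r)$ is embedded and convex, and each of the sides $[pq],[qr],[rp]$ is mapped by $f$ onto a geodesic of $N$, then $f$ carries $T$ isometrically onto the totally geodesic triangle $\Delta\subseteq N$ with vertices $f(p),f(q),f(r)$.} The proof is short: the three image sides are geodesics of lengths $|[pq]|,|[qr]|,|[rp]|$, so $\Delta$ has the same side lengths as $T$, giving a congruence $\phi\colon T\to\Delta$ matching vertices; for $s$ in the interior of $T$, run the geodesic from $p$ through $s$ to the point $t$ where it meets the opposite side $[qr]$; since $f|_{[qr]}$ is the arclength parametrisation of the side $[f(q)f(r)]$ of $\Delta$ one has $f(t)=\phi(t)$, whence $d_N(f(p),f(t))=d_\Delta(\phi(p),\phi(t))=|[pt]|$, so $f([pt])$ is a geodesic and $f$ agrees with $\phi$ along it; letting $t$ range over $[qr]$ gives $f|_T=\phi$.

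Now suppose $v_1,v_2\in D_x$ with $v_2\neq\pm v_1$, and let $\sigma_1,\sigma_2$ be short geodesics through $x$ in these directions (arclength-parametrised from $x$), with $\tau_i=f(\sigma_i)$ geodesics through $f(x)$. Comparing the hyperbolic law of cosines in $S$ and in $N$ for the pair of points $\sigma_1(r),\sigma_2(r)$, and then for $\sigma_1(r),\sigma_2(-r)$, with $r>0$ small, and using only that $f$ is $1$-Lipschitz, one finds that the angle between $\tau_1$ and $\tau_2$ at $f(x)$ is simultaneously $\le$ and $\ge$ the angle between $\sigma_1$ and $\sigma_2$ at $x$; thus this angle is \emph{preserved}, in particular $\tau_1\neq\tau_2$, and $\tau_1,\tau_2$ lie on a common small totally geodesic disk $\Pi$ through $f(x)$. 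Feeding the equality of angles back into the law of cosines gives $d_N(f(a),f(b))=|[ab]|$ whenever $a$ lies on $\sigma_1$ and $b$ on $\sigma_2$ near $x$, so every such segment $[ab]$ is carried to a geodesic. Writing $a,a'$ for the endpoints of $\sigma_1$ and $b,b'$ for those of $\sigma_2$, the two triangles $T(a,b,a')$ and $T(a',b',a)$ tile a genuine neighbourhood $U$ of $x$, and each has all three sides mapped to geodesics; by the rigidity lemma each is carried isometrically onto a totally geodesic triangle with two vertices on $\tau_1$ and one on $\tau_2$, hence lying in $\Pi$. Therefore $f(U)\subseteq\Pi$, and $f|_U$ is a path-isometric map of a piece of a hyperbolic surface into the totally geodesic disk $\Pi$; being an isometry on each of the two triangles, whose images meet along a segment of $\tau_1$ and lie on opposite sides of $\tau_1$, the map $f|_U$ is $C^1$ across the common edge and hence a local isometry, in particular totally geodesic. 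This proves the assertion; the step demanding the most care is the rigidity lemma together with organising the comparison estimates and taking all the neighbourhoods above small enough at once, while conceptually the heart of the matter is that two transverse geodesic directions at $x$ force the image to be flat near $f(x)$, with the law of cosines supplying the rigidity.

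Finally, that $\gamma$ is a geodesic lamination follows from the assertion together with two remarks. If $x\in\gamma$ with $D_x=\{v_x,-v_x\}$, then every point $y$ on the geodesic through $x$ in direction $v_x$ sufficiently close to $x$ again lies in $\gamma$: otherwise $D_y$ would be the whole circle in $T_yS$ and $f$ would be totally geodesic on a ball about $y$ containing $x$, contradicting $x\in\gamma$. So $\gamma$ contains a geodesic subarc through each of its points. Moreover, two such subarcs cannot cross transversally, since a transverse intersection point $z$ would lie on two geodesics mapped to geodesics in non-opposite directions, forcing $f$ to be totally geodesic near $z$, which is impossible for $z\in\gamma$. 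Hence near each of its points $\gamma$ is a disjoint union of geodesic arcs; being closed and containing an arc through each point, $\gamma$ is a disjoint union of complete geodesics (closed or bi-infinite), i.e.\ a geodesic lamination, and, as already shown, $f$ is totally geodesic on $S\setminus\gamma$.
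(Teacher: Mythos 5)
The paper itself gives no proof of this statement --- it is quoted verbatim from Thurston's notes (Proposition 8.8.2) --- so your argument has to be judged on its own terms. Its first part is essentially right, and is the standard argument: the triangle rigidity lemma is correct (equal side lengths give a congruence, and coning from a vertex over the opposite side propagates it to the interior), and the law-of-cosines comparison applied to the two pairs $\sigma_1(r),\sigma_2(r)$ and $\sigma_1(r),\sigma_2(-r)$ does force preservation of the angle at $x$, hence the distance equalities, hence (via the two triangles glued along $\sigma_1$, whose images lie on opposite sides of $\tau_1$, so there is no folding) that $f$ is totally geodesic near $x$ whenever $D_x$ contains two non-opposite directions. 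This correctly yields that $S\setminus\gamma$ is open, that $\gamma$ is closed, and that $f$ is totally geodesic on $S\setminus\gamma$.

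The genuine gap is in the final paragraph, where you pass from this local criterion to ``$\gamma$ is a disjoint union of complete geodesics.'' First, your argument that points $y$ of $g_x$ near $x\in\gamma$ again lie in $\gamma$ does not work as written: if $y\notin\gamma$, the main assertion gives total geodesicity of $f$ on \emph{some} ball about $y$, but the radius of that ball is in no way controlled and need not reach $x$, no matter how close $y$ is to $x$; so no contradiction with $x\in\gamma$ is obtained. Second, even granting that local claim, the closing sentence ``being closed and containing an arc through each point, $\gamma$ is a disjoint union of complete geodesics'' conceals the heart of the proposition: one must rule out a maximal arc terminating at a point $m$ of $S$. At such a putative endpoint the pleated-surface condition only supplies a geodesic-mapped segment through $m$ which may be \emph{transverse} to the arriving arc, and your no-crossing lemma does not apply there: $m$ is an endpoint, not an interior point, of the arriving arc, and in any case arcs merely known to be contained in $\gamma$ are not known to be mapped to geodesics, which is what the lemma (via $D_m$) requires. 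What is missing is the argument that bending cannot die out or turn at an interior point of $S$ --- equivalently, that the maximal geodesic segment through a point of $\gamma$ mapped to a geodesic is a complete geodesic lying in $\gamma$. This needs, for instance, a one-sided version of your angle rigidity at $m$ (a segment \emph{ending} at $m$ plus a transverse segment \emph{through} $m$, using $\alpha_1+\alpha_2=\pi$ to force both angle equalities) together with a propagation statement along geodesic-mapped segments; none of this is in your write-up, and it is precisely the nontrivial content of the completeness/lamination assertion beyond the local flatness criterion you established.
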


Thurston further shows \cite[Ch. 8]{thurstonnotes} that the  boundary components  of the convex core $CC(M)$  are pleated surfaces.

As thickness (Question \ref{thickness}) tends to infinity, the convex core may converge (up to extracting  subsequences) to one of two kinds of convex manifolds $CC(M_\infty)$ (see schematic diagram below):

\begin{enumerate}
	\item $CC(M_\infty)$ is homeomorphic to $S \times [0, \infty)$. Here $S \times \{ 0\}$ is the single boundary component of the convex core  $CC(M_\infty)$.
	Such a manifold is called  {\bf simply degenerate} and the corresponding Kleinian group a  simply degenerate surface Kleinian group. In this case the limit set $\Lambda_G$ is a dendrite (topologically a tree) and the domain of discontinuity $\Gomega$ is homeomorphic to an open disk with $\Gomega/G$ a Riemann surface homeomorphic to $S \times \{ 0\}$. The end of $M_\infty$ facing $\Gomega/G$ is called a {\bf geometrically finite end}.
	\item $CC(M_\infty)$ is homeomorphic to $S \times (-\infty, \infty)$. Such a manifold is called  {\bf doubly degenerate} and the corresponding Kleinian group a  doubly degenerate surface Kleinian group. In this case the limit set $\Lambda_G$ is all of $S^2$ and the domain of discontinuity $\Gomega$ is empty.\\
\end{enumerate}

\begin{center}
	
	\includegraphics[height=6cm]{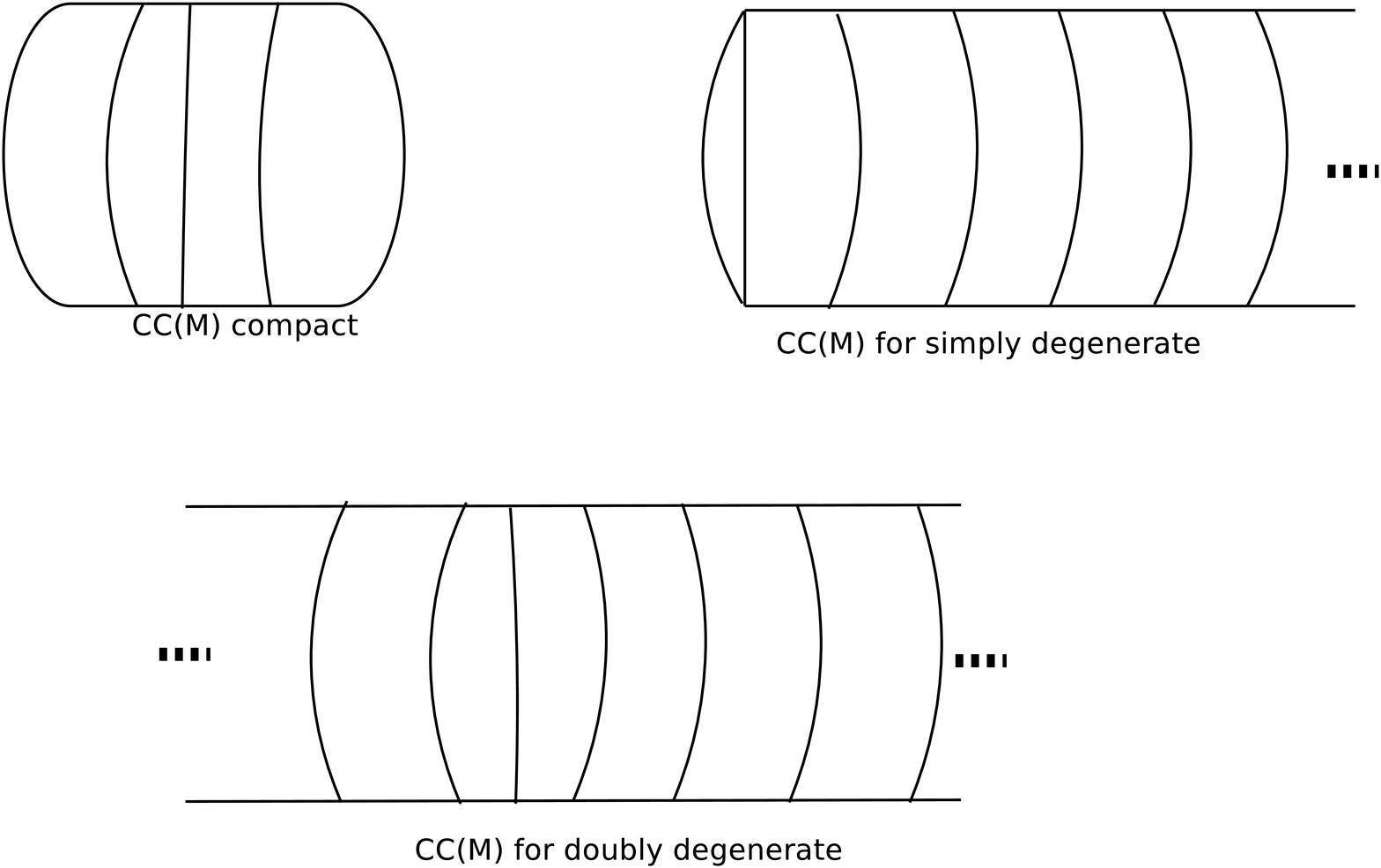}

\end{center}

The ends (one for simply degenerate and two for doubly degenerate) of $CC(M_\infty)$ are called the {\bf degenerate end(s)} of $M$. Thurston \cite{thurstonnotes} and Bonahon \cite{bonahon-bouts} show that any such degenerate end $E$ is {\bf geometrically tame}, i.e.\ there is a sequence of simple closed curves $\{ \sigma_n \}$ on $S$ such that their geodesic realizations in $CC(M_\infty)$ exit $E$ as $n$ tends to $\infty$.
Further  \cite[Ch. 9]{thurstonnotes}, the limit of any such exiting  sequence (in $\PML(S)$; 
the topology on $\PML(S)$ is quite close to the Hausdorff topology on $S$) is a unique lamination $\lambda$ called the {\bf ending lamination} of $E$.
Thus a doubly degenerate manifold has two ending laminations, one for each degenerate end, while a simply degenerate manifold has a geometrically finite end corresponding to a Riemann surface  $\Gomega/G (\in Teich(S))$ at infinity and a degenerate end corresponding to an ending lamination. 

These two pieces of information--Riemann surfaces at infinity and ending laminations--give the {\bf end-invariants} of $M$.
The ending lamination for a  geometrically infinite end does not depend on the reference hyperbolic structure on $S$. It may be regarded as a purely topological piece of data associated to such an end. 

{\it  We may thus think of the ending lamination as the analog, in the case of geometrically infinite ends, of the conformal structure at infinity for a  geometrically infinite end.}

The  Ending Lamination Theorem of Brock-Canary-Minsky then takes the place of the Bers' simultaneous uniformization theorem and asserts:
\begin{theorem} \cite{minsky-elc1, minsky-elc2} \label{elthm} Let 
$M$ be a simply or doubly degenerate manifold. Then $M$ is determined up to isometry by its end-invariants.
\end{theorem}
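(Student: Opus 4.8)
\section{Proof strategy}

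The plan is to follow the scheme of Minsky and Brock--Canary--Minsky, which breaks the proof into three conceptually distinct parts: (i) a purely combinatorial construction of a \emph{model manifold} $M_\nu$ out of the end invariants $\nu$; (ii) a bi-Lipschitz comparison between $M_\nu$ and the actual hyperbolic manifold $M$; and (iii) a rigidity step upgrading a bi-Lipschitz conjugacy to a conformal one. The point of (i) is that $M_\nu$ depends only on $\nu$, so once (ii) is in hand, two degenerate manifolds with the same end invariants are each bi-Lipschitz to the \emph{same} model, hence bi-Lipschitz to one another in a marking-preserving way; step (iii) then forces them to be isometric.

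\textbf{The model manifold.} Given $\nu$ (a point of $Teich(S)$ or an ending lamination at each degenerate end), I would first record, for every essential subsurface $W\subseteq S$, the subsurface projections of the two end invariants to the curve complex $\CC(W)$, and join them by a geodesic there. The hierarchy machinery of Masur--Minsky organizes all of these geodesics, as $W$ ranges over nested subsurfaces, into a single combinatorial object. From the hierarchy one builds $M_\nu$ by gluing together standard ``blocks'' indexed by the edges of the hierarchy geodesics, together with solid-torus Margulis-type ``tubes'' inserted around those curves that the combinatorics forces to be short; the length and twisting parameters of each tube are read off from the relevant projection coefficients. The output $M_\nu$, and its coarse bi-Lipschitz type, are determined by $\nu$ alone.

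\textbf{The bi-Lipschitz model theorem.} The analytic heart is the assertion that the natural marking-preserving map $F\colon M_\nu \to M$ is bi-Lipschitz, carrying the tubes of $M_\nu$ onto the short Margulis tubes of $M$. One direction -- existence of a uniformly Lipschitz such map -- is Minsky's theorem, proved via pleated-surface sweepouts, the a priori bounds (every curve appearing in the hierarchy has uniformly bounded geodesic length in $M$, and short curves in $M$ appear in the hierarchy with controlled coefficients), and an induction on the complexity of $S$ that pins down the geometry of $M$ outside the short tubes. The reverse inequality, due to Brock--Canary--Minsky, shows that $F$ is proper of degree one and then promotes it to a genuine bi-Lipschitz homeomorphism. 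This is where essentially all of the difficulty resides: one must control infinitely many nested subsurface projections at once and convert the coarse geometry of $\CC(S)$ and its subsurface complexes into honest metric control on $M$, both in its thick part and along the cores of the Margulis tubes.

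\textbf{Rigidity.} Finally, let $M_1=\Hyp^3/G_1$ and $M_2=\Hyp^3/G_2$ have the same end invariants. By the previous step each is bi-Lipschitz to the common model $M_\nu$, so there is a marking-preserving bi-Lipschitz homeomorphism $M_1\to M_2$. Lifting to $\Hyp^3$ and passing to the sphere at infinity $S^2$ produces a quasiconformal homeomorphism of $S^2$ conjugating $G_1$ to $G_2$. On the domain of discontinuity the equality of the Riemann surfaces at infinity forces this map to be conformal; on the limit set Sullivan's rigidity theorem -- a finitely generated Kleinian group supports no measurable invariant line field on its limit set -- forces its Beltrami coefficient to vanish almost everywhere. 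Hence the conjugating map is M\"obius, $G_1$ and $G_2$ are conjugate in $\PSL$, and $M_1$ and $M_2$ are isometric. I expect the decisive obstacle to be the \emph{lower} bi-Lipschitz bound in the model theorem: ruling out the possibility that $M$ is metrically much larger than its combinatorial model predicts, which is precisely the part of the argument not supplied by Minsky's original Lipschitz estimate.
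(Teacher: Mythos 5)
The paper itself gives no proof of this theorem---it is quoted as a citation to Minsky and Brock--Canary--Minsky---and your outline (hierarchy-based model manifold built from the end invariants, the Lipschitz/bi-Lipschitz model theorem, then Sullivan rigidity upgrading a quasiconformal conjugacy to a M\"obius one) is a faithful summary of precisely the strategy of those cited works. So your proposal is correct and takes essentially the same approach as the proof the paper refers to.
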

Thus, 
the Ending Lamination Theorem justifies the following and  may be considered an analog of Mostow Rigidity
for infinite covolume Kleinian groups.
\begin{slogan}
Topology implies Geometry.
\end{slogan}

In order to complete the picture of the Ahlfors-Bers theory to study degenerate surface Kleinian groups $G$ in terms of the dynamics of $G$ on the Riemann sphere $\hat{\mathbb{C}}$, the following issue remains to be addressed:

\begin{qn}\label{ctmq}
Can the data of the ending lamination(s) be extracted from the dynamics of $G$ on  $\hat{\mathbb{C}}$?
\end{qn}

In more informal terms, 
\begin{qn}
	Is the geometric object "at infinity" of the quotient manifold $\Hyp^3/\Gamma$ (i.e.\ the ending lamination) determined 	by the dynamics of $\G$ at infinity (i.e.\, the action of $\G$ on $S^2$)? 
\end{qn}

We will make these questions precise below.
The attempt to make Question \ref{ctmq} precise brings us to the following.

\section{Cannon-Thurston maps}\label{ctint}

\subsection{The main theorem for closed surface Kleinian groups}
 In \cite[Problem 14]{thurston-bams}, Thurston raised the following  question, which is at the heart of the work we discuss here:

\begin{qn} \label{thurston-qn}        Suppose $\G$ has the property that $(\Hyp^3 \cup \Gomega)/{\Gamma}$ is compact. Then is it
	true that the limit set of any other Kleinian group $\Gamma^{\prime}$ isomorphic to $\Gamma$ is the
	continuous image of the limit set of $\Gamma$, by a continuous map taking the
	fixed points of an(y) element $\gamma$ to the fixed points of the corresponding element $\gamma^\prime$? \end{qn}

A special case of Question \ref{thurston-qn} was answered affirmatively in seminal work of Cannon and Thurston \cite{CT, CTpub}:

\begin{theorem}\cite{CTpub}
Let $M$ be a closed hyperbolic 3-manifold fibering over the circle with 
fiber $\Sigma$. Let $\til \Sigma$ and $\til M$ denote the universal
covers of $F$ and $M$ respectively. After identifying  $\til \Sigma$ (resp. $\til M$) with $\Hyp^2$ (resp. $\Hyp^3$), we obtain the compactification  ${{\mathbb{D}}^2}=\Hyp^2\cup{{S}}^1$ (resp.
${{\mathbb{D}}^3}=\Hyp^3\cup{{S}}^2$) by attaching the circle $S^1$ (resp. the sphere $S^2$) at infinity.
Let $i: \Sigma \to M$ denote the  inclusion map of the fiber and $\til{i}: \til{\Sigma} \to \til{M}$ the lift to the universal cover. Then $\til i$
extends to a continuous map  $\hat{i}: \mathbb{D}^2 \to \mathbb{D}^3$.
\label{ctthm}
\end{theorem}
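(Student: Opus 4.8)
The plan is to establish continuity through a coarse-geometric properness criterion, rather than by straightening individual geodesics. Write $X=\til\Sigma=\mathbb H^2$ and $Y=\til M=\mathbb H^3$, fix a basepoint $\tilde p\in X$, and put $q=\til i(\tilde p)\in Y$. Since $\Sigma$ and $M$ are closed, $\pi_1(\Sigma)\hookrightarrow\pi_1(M)$ and both groups act geometrically on $X$ and $Y$; in particular $\til i$ is $\pi_1(\Sigma)$-equivariant and \emph{proper} (preimages of bounded sets are bounded). For such a proper map between proper geodesic Gromov-hyperbolic spaces there is a standard criterion, due to Mitra, for the existence of a continuous boundary extension: $\hat i\colon\mathbb D^2\to\mathbb D^3$ exists if and only if there is a function $\Psi\colon[0,\infty)\to[0,\infty)$ with $\Psi(N)\to\infty$ as $N\to\infty$ such that, for every geodesic segment $\lambda=[x,y]\subset X$ with $d_X(\tilde p,\lambda)\ge N$, the $Y$-geodesic joining $\til i(x)$ to $\til i(y)$ avoids the ball $B_Y(q,\Psi(N))$. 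Recording this reduction is the first step; everything else is the construction of such a $\Psi$.

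\textbf{The fibered structure and the ladder.} Since $M$ is closed, hyperbolic and fibers over the circle, the fiber $\Sigma$ is a closed surface of genus $\ge 2$ and, by Thurston's hyperbolization theorem for fibered manifolds \cite{thurstonnotes}, the monodromy $\phi\colon\Sigma\to\Sigma$ is pseudo-Anosov, with dilatation $\kappa>1$ and transverse stable and unstable geodesic laminations $\Lambda^{\pm}$ on $\Sigma$. Lifting, $Y$ acquires the structure of a bundle over $\mathbb R$: topologically $Y\cong X\times\mathbb R$, foliated by lifted fibers $X_t$; the integer-level fibers $X_n$ are Gromov-hyperbolic, quasi-isometric to $\mathbb H^2$, and are permuted by the $\mathbb Z$-action $\langle\tau\rangle=\pi_1(M)/\pi_1(\Sigma)$, where $\tau$ realizes the monodromy and shifts $X_n$ to $X_{n+1}$; moreover the height map $Y\to\mathbb R$ is Lipschitz, so $d_Y(X_n,X_m)$ is comparable to $|n-m|$. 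Given a geodesic segment $\lambda\subset X_0$, I would form a \emph{hyperbolic ladder} $B_\lambda=\bigcup_{n}\lambda_n\subset Y$, where $\lambda_n\subset X_n$ is a geodesic whose endpoints are the vertical transports of those of $\lambda$; then $\til i(\lambda)$ lies uniformly close to $\lambda_0\subset B_\lambda$, and (since adjacent fibers are uniformly bi-Lipschitz) consecutive rungs $\lambda_n,\lambda_{n+1}$ stay at uniformly bounded Hausdorff distance in $Y$, so $B_\lambda$ is a connected, strip-like subset of $Y$.

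\textbf{Quasiconvexity and conclusion.} The technical core is the claim that \emph{$B_\lambda$ is $K$-quasiconvex in $Y$, with $K$ independent of $\lambda$}. I would attack this by building a fiberwise coarse retraction $\Pi_\lambda\colon Y\to B_\lambda$, sending a point of $X_n$ to a nearest-point projection onto $\lambda_n$ inside $X_n$, and using it to show that the $Y$-geodesic joining any two rungs of $B_\lambda$ stays uniformly close to $B_\lambda$. Although $\Pi_\lambda$ is easily seen to be coarsely Lipschitz, the delicate point is to control it along \emph{vertical} directions, i.e.\ to show that a vertical geodesic segment of $Y$ projects to $B_\lambda$ without appreciable backtracking; this is exactly where the pseudo-Anosov dynamics is indispensable, because as one rises from level $0$ to level $n$ the hyperbolic metric of $Y$ contracts arclength along leaves of the lifted stable lamination by a factor comparable to $\kappa^{-n}$ and expands along the unstable one by $\kappa^{n}$, which prevents the rungs $\lambda_n$ from drifting and forces the projection to make monotone progress along $B_\lambda$. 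Granting this uniform quasiconvexity, one concludes as follows. First, $d_Y(q,B_\lambda)\to\infty$ as $d_X(\tilde p,\lambda)\to\infty$: given $T$, only finitely many fibers $X_n$ meet $B_Y(q,T)$, and for each of those the rung $\lambda_n$ becomes deep inside $X_n$ (relative to the vertical lift of $q$) once $d_X(\tilde p,\lambda)$ is large, because the vertical identification $X_0\to X_n$ is a quasi-isometry for each fixed $n$; hence, by properness of the fiber inclusion, $\lambda_n$ lies far from $q$ in $Y$, so $d_Y(q,B_\lambda)>T$ whenever $d_X(\tilde p,\lambda)$ is large. Since $\til i(\lambda)$ lies close to $\lambda_0\subset B_\lambda$ and $B_\lambda$ is uniformly quasiconvex, the $Y$-geodesic joining $\til i(x)$ and $\til i(y)$ stays within $K$ of $B_\lambda$ and therefore avoids $B_Y(q,\Psi(N))$ for a suitable $\Psi$ with $\Psi(N)\to\infty$; Mitra's criterion then produces the continuous extension $\hat i$. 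Its $\pi_1(\Sigma)$-equivariance is inherited from $\til i$, so $\hat i$ automatically carries the fixed points of any $\gamma$ to those of the corresponding element; moreover, since $\hat i(S^1)$ is a nonempty closed $\pi_1(\Sigma)$-invariant subset of $S^2$ and the normal subgroup $\pi_1(\Sigma)\triangleleft\pi_1(M)$ has the same limit set $S^2$ as the lattice $\pi_1(M)$, one even obtains $\hat i(S^1)=S^2$, the sphere-filling curve.

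\textbf{Main obstacle.} The single genuinely hard ingredient is the uniform quasiconvexity of the ladders $B_\lambda$, equivalently the vertical control of the retraction $\Pi_\lambda$, and underneath it the fine comparison between the hyperbolic metric of $\til M$ and the singular ``solv-type'' structure determined by $\phi$. The subtlety is that $\pi_1(\Sigma)$ is exponentially distorted in $\pi_1(M)$, so one cannot simply transplant quasigeodesics from the fiber, and one must instead understand how a genuine $\mathbb H^3$-geodesic meets the fiber foliation. This is precisely the delicate estimate that Cannon and Thurston establish \cite{CTpub}, recast in the group-theoretic ``ladder'' language above; once it is in hand, everything else reduces to routine bookkeeping with hyperbolicity and the $\mathbb Z$-action.
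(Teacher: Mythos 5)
Your skeleton---Mitra's criterion (Lemma \ref{crit_hyp}), the ladder $B_\lambda$, a fiberwise nearest-point retraction $\Pi_\lambda$, quasiconvexity of the ladder, then a properness estimate---is exactly the route the paper takes in Sections \ref{fiber}--\ref{hyplad}. But at the step you yourself identify as the technical core there is a genuine gap: you never prove the uniform quasiconvexity of $B_\lambda$. Instead you assert that controlling $\Pi_\lambda$ in the vertical direction requires the pseudo-Anosov expansion/contraction by $\kappa^{\pm n}$ along the invariant laminations and a comparison of the hyperbolic metric of $\til M$ with the singular solv structure, and you then defer this ``delicate estimate'' to \cite{CTpub}. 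That is circular (the deferred estimate is essentially the theorem being proved) and, more importantly, a misdiagnosis of where the difficulty sits in the framework you set up. In the ladder argument no pseudo-Anosov dynamics enters at all: one verifies that $\Pi_\lambda$ is coarsely Lipschitz only on pairs at distance $1$, and the vertical case (where, up to bounded error, $y=\phi_j(x)$ for the fixed monodromy quasi-isometry between adjacent fibers) is handled by the soft Lemma \ref{nppqi}, that nearest-point projections in a hyperbolic space coarsely commute with a $(K,\epsilon)$-quasi-isometry, with constant depending only on $(K,\epsilon)$ and $\delta$. Since each single step uses the same quasi-isometry---only long compositions are distorted, and they are never needed---the constants are uniform in the level, giving Theorem \ref{lipret}. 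Quasiconvexity is then immediate from the general fact that a coarse Lipschitz retract of a hyperbolic space is quasiconvex (Corollary \ref{qcladder}); it is the hyperbolicity of the ambient $\til M=\Hyp^3$, not the monodromy dynamics, that controls how an $\Hyp^3$-geodesic meets the fiber foliation. Note also the internal tension in your write-up: if, as you claim, $\Pi_\lambda$ ``is easily seen to be coarsely Lipschitz'' (a statement that already includes displacement across a single vertical edge), then there is no residual ``vertical control'' left to establish, and your appeal to the solv-metric analysis is superfluous by your own premises.

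The remainder of your argument is essentially sound and parallels the paper: your deduction that $d_Y(q,B_\lambda)\to\infty$ as $d_X(\tilde p,\lambda)\to\infty$ plays the role of the estimate in the proof of Theorem \ref{ctthm2}, where the paper argues more cleanly by descending one level at a time at bounded cost $C_0$ to get $d_X(y_0,p)\ge \max\bigl(g(N)-C_0 j,\ j\bigr)$, hence a proper function $M(N)$; your version (finitely many fibers meet a given ball, each fiber is properly embedded, and all fibers are isometric translates of $X_0$) can be made to work but should be written with that uniformity made explicit. So the proposal stands or falls with the quasiconvexity of the ladder, and as written that step is not supplied.
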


An amazing implication of Theorem \ref{ctthm} is that $\til \Sigma$ is an embedded disk in (the ball model) of $\Hyp^3$ such that its boundary on the sphere $S^2$ is space-filling! See the following diagram by Thurston \cite[Figure 10]{thurston-bams} that illustrates `a pattern of identification of a circle, here represented as the equator, whose quotient is topologically a sphere. This defines, topologically a sphere-filling curve.'\\

\begin{center}
	
	\includegraphics[height=4cm]{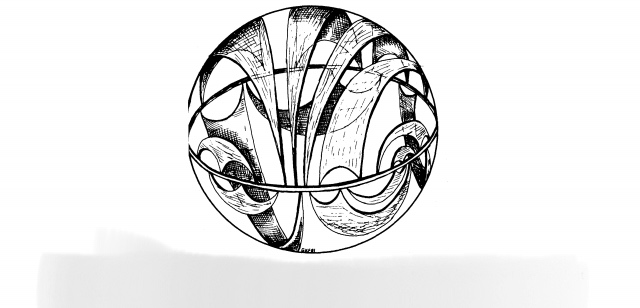}

\end{center}

A version of Question \ref{thurston-qn} was raised by Cannon and Thurston in the  context of closed surface Kleinian groups:

\begin{qn}\cite[Section 6]{CTpub} 
	Suppose that a closed surface group $\pi_1 (S)$ acts freely and properly
	discontinuously on ${\mathbb{H}}^3$ by isometries such that the quotient manifold has no accidental parabolics (here this just means that the image of $\pi_1 (S)$ in $\PSL$ has no parabolics). Does the inclusion
	$\tilde{i} : \widetilde{S} \rightarrow {\mathbb{H}}^3$ extend
	continuously to the boundary?
	\label{ctqn}
\end{qn}

Continuous boundary extensions as in Question \ref{ctqn}, if they exist, are called {\bf Cannon-Thurston maps}.
Question \ref{ctqn} is intimately related to a much older question c.f.\  \cite{abikoff-ct} asking if limit sets are locally connected:

\begin{qn} \label{lcqn} Let $\Gamma$ be a finitely generated Kleinian group such that the limit set $\Lambda_\Gamma$ is connected. Is $\Lambda_\Gamma$ locally connected?
\end{qn}

It is shown in \cite{CTpub}  that for simply degenerate surface Kleinian groups, Questions \ref{ctqn} and \ref{lcqn}
are equivalent, via the Caratheodory extension Theorem. 

The following Theorem of \cite{mahan-split} \cite{mahan-elct} answers questions \ref{ctqn} and \ref{lcqn} affirmatively:

\begin{theorem}\label{ctsurf}
	Let $\rho (\pi_1(S)) =G \subset   \pslc$ be a simply or doubly degenerate (closed) surface Kleinian group. Let $M=\Hyp^3/G$ and $i: S \to M$ be an embedding inducing a homotopy equivalence. Let $\til{i} : \til{S} \to \Hyp^3$ denote a lift of $i$ between universal covers. Let $\mathbb{D}^2, \mathbb{D}^3$ denote the compactifications. Then a Cannon-Thurston map  $\hat{i}: \mathbb{D}^2 \to \mathbb{D}^3$ exists.
	
	Let $\partial i: S^1 \to S^2$ denote the restriction of $\hat{i}$ to the ideal boundaries. Then for $p\neq q$, $\partial i(p) = \partial i(q)$ if and only if $p, q$ are the ideal end-points of a leaf of an ending lamination or ideal end-points of a complementary ideal polygon of an ending lamination.
\end{theorem}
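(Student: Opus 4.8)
The plan is to prove the statement in three stages: first establish the \emph{existence} of the Cannon-Thurston map $\hat i$, then identify the point pre-images of $\partial i$, treating the two ending laminations symmetrically.

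\textbf{Existence of $\hat i$.} The natural framework is to view $\til S$ with its hyperbolic metric and $\til M = \Hyp^3$ with its hyperbolic metric, and to show that $\til i$ is a \emph{proper} embedding which is moreover a quasi-isometric embedding in a coarse sense away from which one has good geometric control. The key tool I would use is a \emph{Lipschitz model manifold} for the degenerate end(s), furnished by the work underlying the Ending Lamination Theorem (Theorem \ref{elthm}): Minsky's model gives a bi-Lipschitz model for $M$, and in particular a model for $CC(M)$ built out of blocks indexed by the resolution of the ending lamination(s) into simple closed curves $\{\sigma_n\}$. To prove continuity of the boundary extension, the standard criterion is the following: for a basepoint $o \in \til S$, a map $\til i$ extends continuously to the boundary precisely when for every sequence $x_n \to \partial \til S$ the geodesic rays $[o, \til i(x_n))$ in $\Hyp^3$ converge, equivalently when there is a function $f$ with $f(R) \to \infty$ such that any geodesic segment in $\til S$ lying outside $B_R(o)$ is mapped by $\til i$ to a path lying outside $B_{f(R)}(\til i(o))$ in $\Hyp^3$. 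So the first main step is to construct such an $f$; this is where one exploits the geometry of the model: a path in $\til S$ far from $o$ projects to a path in $S$ that must ``wrap around'' many of the curves $\sigma_n$, and in the model manifold such a path is forced deep into the degenerate end, hence far from $\til i(o)$ in $\Hyp^3$. I would organize this via the notion of \emph{electric geometry}: electrocute the thin parts / the $\sigma_n$-annuli, show $\til i$ is a (uniform) quasi-isometric embedding for the electric metrics, and then ``de-electrocute'' using the bounded geometry of the model blocks together with the fact that the $\sigma_n$ have bounded geometric length in the convex core.

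\textbf{Surjectivity onto the limit set and the lamination-to-diagonal direction.} Once $\hat i$ exists, $\partial i(S^1)$ is a compact $G$-invariant subset of $S^2$ containing fixed points of all $\gamma \in G$, hence equals $\Lambda_G$ by minimality (Section \ref{kg}); in the doubly degenerate case $\Lambda_G = S^2$, so $\partial i$ is a sphere-filling curve. Now I would prove the ``if'' direction: if $p, q \in S^1$ are the two ideal endpoints of a leaf $\ell$ of an ending lamination $\lambda$ (or two ideal vertices of a complementary polygon), then $\partial i(p) = \partial i(q)$. The idea: realize $\ell$ (or a diagonal of the polygon) as a bi-infinite geodesic $\til\ell$ in $\til S$; push it into the degenerate end by the exiting sequence $\{\sigma_n\}$. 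Since $\lambda$ is the ending lamination, the geodesic realizations of curves approximating $\ell$ exit the end, so the two ends of $\til i(\til\ell)$ in $\Hyp^3$ are being pushed toward the \emph{same} point of $S^2$ — because in the model manifold the geodesic $\til\ell$, read along the end, has its two sides separated only by material that itself escapes to infinity. Concretely one shows the two rays $\til i(\til\ell|_{[0,\infty)})$ and $\til i(\til\ell|_{(-\infty,0]})$ are \emph{asymptotic} in $\Hyp^3$, i.e.\ stay a bounded Hausdorff distance apart after electrocution and that bound translates (via bounded geometry) to their limiting to one ideal point; for a complementary polygon all its diagonals collapse to one point by the same argument applied to each boundary leaf.

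\textbf{The hard part: the converse (injectivity off the lamination).} The substantial obstacle is showing that these are the \emph{only} identifications: if $p, q \in S^1$ are \emph{not} endpoints of a leaf of, nor vertices of a complementary polygon of, an ending lamination, then $\partial i(p) \neq \partial i(q)$. Equivalently, one must show that a Cannon-Thurston map cannot identify more than the ending lamination forces. The strategy I would pursue: suppose $\partial i(p) = \partial i(q) = \xi$. Then the geodesic $[p,q]$ in $\til S$ maps under $\til i$ to a bi-infinite path in $\Hyp^3$ both of whose ends converge to $\xi$; pulling back, one produces a sequence of curves/arcs crossing $[p,q]$ transversally whose geodesic realizations exit the end(s). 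Taking a Hausdorff limit of these in $\PML(S)$ and using the uniqueness of the ending lamination (Thurston--Bonahon, Section \ref{elt}), one concludes that the limiting lamination is (a sublamination of) an ending lamination and that $[p,q]$ is \emph{not} transverse to it — forcing $[p,q]$, and hence $p,q$, to lie in the closure of a single leaf or complementary region, i.e.\ $p,q$ are endpoints of a leaf or polygon vertices. Making this rigorous requires the precise ``no wrapping'' / bounded-coarse-geometry estimates from the model, together with a careful analysis of how two points at infinity of $\til S$ that are \emph{not} lamination-related are separated by a leaf of the lamination in the arc-and-curve sense, and the proof that such a separating leaf forces $\partial i(p) \neq \partial i(q)$ by exhibiting a ``barrier'' surface in $\Hyp^3$ (a pleated surface along the leaf) separating the two limiting rays. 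This last separation argument — producing and controlling the barrier in the variable geometry of a degenerate end — is where the real work lies, and it is here that one leans most heavily on the Lipschitz model and, ultimately, on the Ending Lamination Theorem.
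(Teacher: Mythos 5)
Your outline goes wrong at the very heart of the existence proof. First, you misstate the continuity criterion: what is needed (Lemma \ref{crit_hyp}) is that if a geodesic $\lambda=[a,b]$ of $\til S$ lies outside $B_N(o)$, then the \emph{geodesic in $\Hyp^3$ joining $\til i(a)$ and $\til i(b)$} lies outside a ball of radius $M(N)\to\infty$; it is not enough that the image path $\til i(\lambda)$ stays far from $\til i(o)$, which is essentially automatic from properness and is what your ``forced deep into the end'' argument controls. The whole difficulty is to rule out the ambient geodesic shortcutting back near the basepoint, and your proposed mechanism for this -- that $\til i$ is a uniform quasi-isometric embedding in the electrified metrics -- is false: the fiber is exponentially distorted in $\til M$, and electrocuting Margulis tubes or split components does not remove that distortion. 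The paper bridges exactly this gap with the hyperbolic ladder $\LL_\lambda$ of Section \ref{hyplad}: one flows $\lambda$ through the blocks to build $\LL_\lambda$, proves the coarse Lipschitz retraction $\Pi_\lambda$ (Theorem \ref{lipret}) and hence quasiconvexity of $\LL_\lambda$ (Corollary \ref{qcladder}) in the electric metric, so that the (electric) geodesic joining the endpoint images stays near $\LL_\lambda$, every point of which is far from the basepoint; one then recovers a genuine hyperbolic geodesic from the electric one. Your sketch contains no substitute for the ladder/quasiconvexity step, and this is not a detail one can wave at -- it is the main new idea. Moreover, your ``de-electrocution'' step leans on bounded geometry of the model blocks, which fails in general: the paper has to first establish that an arbitrary degenerate end has \emph{split geometry} (Definition \ref{sg}), with the quasiconvexity of split components proved via bounded graph-diameter of pleated surfaces and the Brock-Bromberg drilling theorem; without this (or some replacement) the recovery of hyperbolic geodesics from electric ones does not go through.

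On the second part of the theorem, the paper itself only cites \cite{mahan-elct} rather than proving the preimage structure, so a detailed comparison is not possible; your two-way outline (collapsing leaves by asymptoticity, and a converse via exiting realizations and uniqueness of the ending lamination) is in the right spirit but is stated at a level of generality where the key steps (why $\partial i(p)=\partial i(q)$ produces transversal curves exiting the end, and why the ``barrier'' pleated surface separates the rays in variable geometry) are asserted rather than argued. The decisive deficiency of the proposal, however, remains the missing ladder argument in the existence part.
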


The second part of Theorem 
\ref{ctsurf} shows that the data of the ending lamination can be recovered from the Cannon-Thurston map and so we have an affirmative answer to Question \ref{ctmq}. In conjunction with the Ending Lamination Theorem \ref{elthm}, this establishes the slogan: 

\begin{slogan}
 Dynamics on the limit set determines geometry in the interior. 
\end{slogan}

A number of authors have contributed to the resolution of the above questions.
Initially it was believed \cite{abikoff-ct} that Question \ref{lcqn} had a negative answer for simply degenerate Kleinian groups.
 Floyd \cite{Floyd} proved the corresponding theorem for  geometrically finite Kleinian groups. Then in the early 80's
 Cannon and Thurston
\cite{CT} proved Theorem \ref{ctthm}. This was extended by Minsky \cite{minsky-jams}, Klarreich \cite{klarreich},  Alperin-Dicks-Porti \cite{ADP}, Bowditch \cite{bowditch-stacks, bowditch-ct}, McMullen \cite{ctm-locconn}, Miyachi \cite{miyachi-ct}  and the author \cite{mitra-trees, mahan-bddgeo, mahan-ibdd, mahan-amalgeo}  for various special cases. The general surface group case was accomplished in \cite{mahan-split} and the general Kleinian group case in \cite{mahan-kl}.

\subsection{Geometric Group Theory} We now turn to a generalization of Question \ref{ctqn} to a far more general context. After the introduction of hyperbolic metric spaces by Gromov \cite{gromov-hypgps}, Question \ref{ctqn} was extended by the author \cite{mitra-thesis, bestvinahp, mitra-survey} to the context of a
hyperbolic group $H$ acting freely and properly discontinuously
by isometries on a hyperbolic metric space $X$. Any hyperbolic $X$ has a (Gromov) boundary $\partial X$ given by asymptote-classes of geodesics. Adjoining $\partial X$ to $X$ we get the Gromov
compactification $\hhat X$.

There is a natural map
$i : \Gamma_H \rightarrow X$, sending   vertices of $\Gamma_H$ to
the $H-$orbit of a point $x \in X$, and connecting images of adjacent vertices
by geodesic segments in $X$. Let $\hhat{\Gamma_H}$, $\hhat{X}$ denote the Gromov
compactification of $\Gamma_H$, $X$ respectively.
The analog of Question \ref{ctqn}
 is the following:

\begin{qn}\label{ctq2}
Does $ i : \Gamma_H
\to X$ extend continuously to a  map $\hat{i} : \hhat{\Gamma_H} \to \hhat{X}$?
\end{qn}

Continuous extensions as in Question \ref{ctq2} are also referred to as {\bf Cannon-Thurston maps} and make sense when $\Gamma_H$ is replaced by an arbitrary hyperbolic metric space $Y$. A simple and basic criterion for the existence of Cannon-Thurston maps was established in \cite{mitra-ct, mitra-trees}:
\begin{lemma} Let $i : (Y,y) \to (X,x)$ be a proper map between (based) Gromov-hyperbolic spaces.
	A continuous extension (also called a Cannon-Thurston map)
	 $\hat{i}: \hhat{Y} \to \hhat{X}$
	exists if  and only if the following holds: \\
There exists a non-negative proper function  $M: \natls \to \natls$, such that 
if $\lambda = [a,b]_Y$  is a geodesic lying outside an $N$-ball
	around $y$, then any geodesic segment $[i(a),i(b)]_X$    in $X$ joining $i(a),i(b)$
	 lies outside the $M(N)$-ball around 
	$x=i(y)$.
		\label{crit_hyp}
\end{lemma}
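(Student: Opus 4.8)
The plan is to recast the equivalence entirely in terms of the Gromov product, using the standard comparison that in a $\delta$-hyperbolic space $(u\cdot v)_w$ lies within an additive constant $C(\delta)$ of the distance $d(w,[u,v])$ from $w$ to a geodesic joining $u$ and $v$ (in $X$ this also makes the choice of geodesic $[i(a),i(b)]_X$ irrelevant up to a uniform error). Under this dictionary, ``$[a,b]_Y$ lies outside the $N$-ball about $y$'' reads ``$(a\cdot b)_y\ge N-C(\delta)$'', and the condition on $M$ becomes: largeness of $(a\cdot b)_y$ forces largeness of $(i(a)\cdot i(b))_x$, controlled by a single proper function independent of the points and of the basepoints. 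That is exactly the uniformity needed to convert sequences in $Y$ converging to $\partial Y$ into sequences in $X$ converging to $\partial X$, and conversely, which is what a Cannon-Thurston map amounts to.

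For the ``if'' direction I would assume such an $M$ and argue as follows. Given $\xi\in\partial Y$, pick $a_n\to\xi$, so $(a_m\cdot a_n)_y\to\infty$; the $\delta$-estimate puts any geodesic $[a_m,a_n]_Y$ outside the $N_{m,n}$-ball about $y$ with $N_{m,n}\to\infty$; the hypothesis puts $[i(a_m),i(a_n)]_X$ outside the $M(N_{m,n})$-ball about $x$; and the $\delta$-estimate in $X$ together with properness of $M$ gives $(i(a_m)\cdot i(a_n))_x\to\infty$. Hence $\{i(a_n)\}$ represents a point of $\partial X$, which I declare to be $\hat i(\xi)$; interleaving two sequences representing $\xi$ and rerunning the estimate shows this is independent of the choice. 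For continuity of $\hat i$ at $\xi$ I would repeat the estimate ``with one endpoint on the boundary'': if $p_n\to\xi$ in $\hhat{Y}$ then $(p_n\cdot\xi)_y\to\infty$, so fixing a sequence realising $\hat i(\xi)$ (and, when $p_n\in\partial Y$, a sequence realising $\hat i(p_n)$) and using the approximate triangle inequality for Gromov products, possibly iterated, one gets $(\hat i(p_n)\cdot\hat i(\xi))_x\to\infty$, i.e.\ $\hat i(p_n)\to\hat i(\xi)$. Continuity of $\hat i$ at interior points is immediate, since $Y$ is open in $\hhat{Y}$ and $i$ is continuous there. (When $Y$ is proper, as in all cases considered here, $\hhat{Y}$ is compact metrizable and sequential continuity suffices; in general one argues identically with the neighbourhood filters of the Gromov topology.)

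For the ``only if'' direction I would assume $\hat i$ exists and suppose no such $M$ works. Then there are a constant $K$, radii $N_n\to\infty$, and geodesics $[a_n,b_n]_Y$ lying outside the $N_n$-ball about $y$ with $d_X(x,[i(a_n),i(b_n)]_X)\le K$, hence $(i(a_n)\cdot i(b_n))_x\le K+O(\delta)$. On the other hand $(a_n\cdot b_n)_y\to\infty$ and $d_Y(y,a_n),d_Y(y,b_n)\to\infty$; passing to a subsequence, $a_n\to\alpha$ for some $\alpha\in\partial Y$, and the Gromov-product inequality forces $b_n\to\alpha$ as well. Continuity of $\hat i$ gives $i(a_n)\to\hat i(\alpha)$ and $i(b_n)\to\hat i(\alpha)$, while properness of $i$ forces $d_X(x,i(a_n))\to\infty$, so $\hat i(\alpha)\in\partial X$; therefore $(i(a_n)\cdot i(b_n))_x\to\infty$, contradicting the bound above.

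The two uses of the Gromov-product/distance-to-geodesic comparison are routine. The step I expect to need the most care is the continuity of $\hat i$ along $\partial Y$ in the ``if'' direction: well-definedness of $\hat i(\xi)$ is a single limiting argument, but continuity requires the estimate to be uniform over \emph{all} sequences $p_n\to\xi$ simultaneously, and it is precisely the existence of one proper function $M$ — rather than a bound depending on $\xi$ or on the chosen sequence — that supplies this. Subsidiary nuisances are the bookkeeping when the $p_n$ themselves lie on $\partial Y$, which forces one to approximate $\hat i(p_n)$ from inside $X$ and carry an extra layer of $\delta$'s through the Gromov-product inequality, and, for non-proper source spaces, the extraction of the subsequential boundary limit $\alpha$ in the converse direction.
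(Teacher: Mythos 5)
Your proof is correct and follows essentially the same route as the original references the paper cites for this lemma (where the survey itself gives no proof): translate ``geodesic outside a large ball'' into Gromov products, define $\hat{i}$ on $\partial Y$ by limits of image sequences, use the single proper function $M$ to get well-definedness and continuity at the boundary, and obtain the converse by contradiction. The one caveat, which you already flag yourself, is that extracting the subsequential boundary limit $\alpha$ in the ``only if'' direction uses properness of $Y$ (compactness of $\hhat{Y}$), which holds in all the situations the paper considers.
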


In the generality above Question \ref{ctq2} turns out to have a negative answer.  An explicit counterexample to Question \ref{ctq2} was recently found by Baker and Riley \cite{br-ct} in the context of small cancellation theory. The counterexample  uses Lemma \ref{crit_hyp} to rule out the existence of Cannon-Thurston maps. Further, Matsuda and Oguni \cite{mo} further developed Baker and Riley's counterexample  to show that given a(ny) non-elementary hyperbolic group $H$,  there exists  hyperbolic group $G$ such that $H \subset G$ and there is no Cannon-Thurston map for the inclusion. We shall furnish positive answers to Question \ref{ctq2} in a number of special cases in Section \ref{trees}.

\section{Closed 3-manifolds}
\subsection{3-manifolds fibering over the circle}\label{fiber} We start by giving a sketch of the proof of Theorem \ref{ctthm}, 

The proof is coarse-geometric in nature and follows \cite{mitra-ct, mitra-trees}.
 We recall a couple of basic Lemmata we shall be needing from \cite{mitra-trees}.
The following says that nearest point projection onto a geodesic in a hyperbolic space is coarsely Lipschitz.

\begin{lemma}
	Let $(X,d)$ be a $\delta$-hyperbolic metric space. Then there exists a constant $C \geq 1$ such that the following holds: \\
	Let $\lambda \subset X$ be
	a geodesic segment and let
	$\Pi: X \rightarrow\lambda$ be a nearest point projection. Then $d(\Pi(x),\Pi(y))\leq Cd(x,y)$ for
	all $x, y\in X$.
	\label{npplip}
\end{lemma}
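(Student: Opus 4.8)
The plan is to prove the stronger ``coarse perpendicularity'' statement --- that a geodesic from a point $x$ to its projection $\Pi(x)$ on $\lambda$, followed by the segment of $\lambda$ from $\Pi(x)$ to any other point of $\lambda$, is a \emph{coarse} geodesic --- and then to extract the Lipschitz estimate from a two-line application of the triangle inequality.

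\emph{Step 1 (coarse perpendicularity).} First I would show that there is a constant $C_1=C_1(\delta)$ so that for every $x\in X$, writing $p=\Pi(x)$, and for every $z\in\lambda$ one has $d(x,z)\ge d(x,p)+d(p,z)-C_1$. Consider the geodesic triangle with vertices $x,p,z$, whose sides are $[x,p]$, $[x,z]$, and the subsegment of $\lambda$ from $p$ to $z$. By $\delta$-thinness (the Rips condition), every point of $[x,z]$ lies within $\delta$ of $[x,p]\cup[p,z]$. Parametrising $[x,z]$ from $x$ and letting $w$ be the last point lying within $\delta$ of $[x,p]$ (this exists by a routine closedness/continuity argument, and then $w$ also lies within $\delta$ of $[p,z]$), pick $u\in[x,p]$ with $d(w,u)\le\delta$ and $v\in[p,z]\subset\lambda$ with $d(w,v)\le\delta$. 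Since $p=\Pi(x)$ and $v\in\lambda$, $d(x,p)\le d(x,v)\le d(x,u)+2\delta$, while $u\in[x,p]$ forces $d(x,u)+d(u,p)=d(x,p)$; hence $d(u,p)\le 2\delta$ and $d(w,p)\le 3\delta$. Splitting $d(x,z)=d(x,w)+d(w,z)$ and using $d(x,w)\ge d(x,p)-3\delta$ and $d(w,z)\ge d(p,z)-3\delta$ gives the claim with $C_1=6\delta$ (the precise multiple of $\delta$ depending on the chosen convention for hyperbolicity, but in any case $O(\delta)$).

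\emph{Step 2 (deducing the bound).} Let $x,y\in X$, put $p=\Pi(x)$, $q=\Pi(y)$. Applying Step 1 with $z=q\in\lambda$ gives $d(x,q)\ge d(x,p)+d(p,q)-C_1$. On the other hand, since $q=\Pi(y)$ and $p\in\lambda$ we get $d(x,q)\le d(x,y)+d(y,q)\le d(x,y)+d(y,p)\le 2\,d(x,y)+d(x,p)$. Combining these two estimates and cancelling $d(x,p)$ yields $d(\Pi(x),\Pi(y))=d(p,q)\le 2\,d(x,y)+C_1$. This is the coarsely Lipschitz form of the statement; in the metric-simplicial/graph setting in which the lemma is applied --- where $\Pi$ may be taken to send vertices to vertices and distinct points of interest satisfy $d(x,y)\ge 1$ --- the additive $C_1$ is absorbed, giving $d(\Pi(x),\Pi(y))\le C\,d(x,y)$ with $C=\max(1,\,2+C_1)$. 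Equivalently, one first proves the case $d(x,y)\le 1$ directly by the same triangle argument and then chains the estimate along a subdivision of $[x,y]$ into unit-length pieces.

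\emph{Expected main obstacle.} All of the content is in Step 1, and the one genuinely delicate point there is the extraction of the ``transition point'' $w$ along $[x,z]$ --- i.e.\ making rigorous that, going from $x$ to $z$, the path first $\delta$-fellow-travels $[x,p]$ and then $\delta$-fellow-travels $[p,z]$. This is handled by the standard compactness/continuity argument on the arc-length parametrisation together with the fact that $[x,z]$ is covered by the $\delta$-neighbourhoods of the other two sides; everything downstream is just the triangle inequality. (One should also record that ``nearest-point projection'' presupposes the infima defining $d(\cdot,\lambda)$ are attained, which is built into the hypothesis that $\Pi$ is a nearest-point projection.)
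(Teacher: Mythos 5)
Your proof is correct. The paper itself gives no proof of this lemma (it is imported from \cite{mitra-trees}), and your argument --- Step 1 showing that $[x,\Pi(x)]$ followed by the subsegment of $\lambda$ from $\Pi(x)$ to $z$ is a coarse geodesic, i.e.\ $d(x,z)\ge d(x,\Pi(x))+d(\Pi(x),z)-6\delta$, and Step 2 extracting the projection bound by the triangle inequality --- is exactly the standard argument used in that source. You are also right to flag the one genuine subtlety: the purely multiplicative estimate $d(\Pi(x),\Pi(y))\le C\,d(x,y)$ for \emph{all} $x,y$ fails in a general $\delta$-hyperbolic geodesic space (projections of arbitrarily close points can differ by an amount comparable to $\delta$), so one must either work in the graph/Cayley-graph setting where relevant points satisfy $d(x,y)\ge 1$ or be content with the coarse-Lipschitz form $d(\Pi(x),\Pi(y))\le 2\,d(x,y)+6\delta$, which you derive and which is all that is ever used in the paper.
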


The next Lemma says that  nearest point projections and quasi-isometries almost commute.
\begin{lemma}
	Let $(X,d)$ be a $\delta$-hyperbolic metric space.
	Given $(K,{\epsilon})$, there exists $C$ such that  the following holds: \\
	Let $\lambda = [a,b]$ be a geodesic segment in $X$. Let $p \in X$ be arbitrary and let 
	$q$ be a nearest point projection of $p$ onto $\lambda$.
	Let $\phi$ be a $(K,\epsilon)-$quasi-isometry from $X$ to itself and let $\Phi(\lambda) = [\phi(a),\phi(b)]$ be a geodesic segment in $X$  joining $\phi(a),\phi(b)$. Let
	$r$ be a nearest point projection of $\phi(p)$ onto $\Phi(\lambda)$.
	Then ${d}(r,\phi(q))\leq C$. 
	\label{nppqi}
\end{lemma}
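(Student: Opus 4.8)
The plan is to combine two standard ingredients: the Morse stability lemma for quasigeodesics in $\delta$-hyperbolic spaces, and a coarse characterization of nearest point projection in terms of thin triangles. The point of the latter is that it phrases ``being the projection'' purely as a collection of ``small distance'' conditions, and such conditions transport across the quasi-isometry $\phi$ with only additive loss.

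First I would record the following coarse description of projection, valid in any $\delta$-hyperbolic space. Let $\lambda = [a,b]$ be a geodesic, $p$ a point, and consider a geodesic triangle on the vertices $a, b, p$ with $\lambda$ as one side and with chosen geodesics $[a,p]$, $[b,p]$ as the other two. Then there is a constant $C_0 = C_0(\delta)$ such that (i) any nearest point projection of $p$ onto $\lambda$ lies within $C_0$ of both $[a,p]$ and $[b,p]$, and conversely (ii) any point of $\lambda$ lying within $C_0$ of both $[a,p]$ and $[b,p]$ lies within $C_0$ of every nearest point projection of $p$ onto $\lambda$. Statement (i) is the usual $\delta$-thinness estimate; statement (ii) follows because the set of points of $\lambda$ satisfying the proximity condition has diameter $O(\delta)$ (two such points, together with $p$ and the relevant pieces of $[a,p]$, $[b,p]$, bound a thin quadrilateral forcing them close) and it contains the projection by (i).

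Next I would apply $\phi$. Being a $(K,\epsilon)$-quasi-isometry, $\phi$ carries the geodesics $\lambda$, $[a,p]$, $[b,p]$ to $(K,\epsilon)$-quasigeodesics, so by the Morse lemma each lies within Hausdorff distance $D = D(\delta,K,\epsilon)$ of the corresponding geodesic $\Phi(\lambda)=[\phi(a),\phi(b)]$, $[\phi(a),\phi(p)]$, $[\phi(b),\phi(p)]$. Since $q \in \lambda$ with $d(q,[a,p]) \le C_0$ and $d(q,[b,p]) \le C_0$, we get that $\phi(q)$ lies within $K C_0 + \epsilon + D$ of both $[\phi(a),\phi(p)]$ and $[\phi(b),\phi(p)]$, and within $D$ of $\Phi(\lambda)$. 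Let $r'$ be a nearest point of $\Phi(\lambda)$ to $\phi(q)$, so $d(\phi(q),r') \le D$; then $r' \in \Phi(\lambda)$ lies within $K C_0 + \epsilon + 2D$ of each of $[\phi(a),\phi(p)]$ and $[\phi(b),\phi(p)]$. Applying part (ii) of the characterization \emph{inside $X$} to the triangle on $\phi(a),\phi(b),\phi(p)$ (after enlarging the constant $C_0$ to absorb $K C_0 + \epsilon + 2D$) shows $r'$ is within $C_1 = C_1(\delta,K,\epsilon)$ of the nearest point projection $r$ of $\phi(p)$ onto $\Phi(\lambda)$. Hence $d(r,\phi(q)) \le d(r,r') + d(r',\phi(q)) \le C_1 + D =: C$, depending only on $\delta, K, \epsilon$.

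I expect the main obstacle to be the clean statement and verification of the \emph{converse} half (ii) of the coarse characterization of projection — that any point of a geodesic close to the other two sides of a geodesic triangle is automatically close to the genuine nearest point projection — together with the bookkeeping that lets one reapply this characterization on the image side after the quasi-isometry has only controlled the relevant distances additively. The Morse lemma input and Lemma \ref{npplip} are entirely standard and cause no difficulty.
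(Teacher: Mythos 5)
Your argument is correct, but it is organized around a different key lemma than the paper's. The paper's (sketched) proof works with the tripod $T=[a,b]\cup[p,q]$: one shows $T$ is quasiconvex (equivalently, $[a,q]\cup[q,p]$ and $[b,q]\cup[q,p]$ are uniform quasigeodesics), that $\phi(T)$ lies at bounded Hausdorff distance from the tripod $T'$ built from $[\phi(a),\phi(b)]$ and $[\phi(p),r]$, and that the centroid $q$ of $T$ is therefore carried near the centroid $r$ of $T'$. You instead work with the geodesic triangle on $a,b,p$ and prove a two-sided coarse characterization of the nearest-point projection: up to bounded error it is the unique point of $[a,b]$ lying close to both $[a,p]$ and $[b,p]$; this condition transports through $\phi$ by the Morse lemma and is then re-read on the image triangle $\phi(a),\phi(b),\phi(p)$. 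The two routes are cousins --- both ultimately rest on stability of quasigeodesics under quasi-isometry --- but yours makes explicit the converse half (ii), which the centroid formulation leaves implicit, and avoids discussing quasiconvexity of the image tripod, at the cost of stabilizing all three sides of the triangle rather than the two legs of a tripod. One cosmetic point: as stated, part (ii) cannot hold with the same constant $C_0$ as input and output --- two points $x,x'\in\lambda$ each within $C_0$ of both other sides satisfy only $d(x,x')\le 2C_0$, as one sees from the estimate $(a|p)_{x'}+(b|p)_{x}\ge d(x,x')$ for $x,x'\in[a,b]$ --- but since you only need an output constant depending on $\delta,K,\epsilon$, and you already enlarge constants when reapplying (ii) on the image side, this bookkeeping issue does not affect the proof.
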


\noindent  {\bf Sketch of Proof:}
The proof of Lemma \ref{nppqi} follows from the fact that a geodesic tripod $T$ (built from $[a,b]$ and $[p,q]$) is quasiconvex in hyperbolic space and that a quasi-isometric image $\phi(T)$ of $T$ lies close to a geodesic tripod $T'$ built from $[\phi(a),\phi(b)]$ and $[\phi(p),r]$. Hence the image $\phi(q)$ of the centroid $q$ of $T$ lies close to the centroid $r$ of $T'$. $\Box$

\subsection{The key tool: hyperbolic ladder}\label{hyplad} The key idea behind the proof of Theorem \ref{ctthm} and its generalizations in \cite{mitra-ct, mitra-trees} is the construction of a {\bf hyperbolic ladder} $\LL_\lambda \subset \til{M}$ for {\it any} geodesic in $\til \s$. The universal cover $ \til{M}$ fibers over $\R$ with fibers $\til \s$. 
Since the context is geometric group theory, we discretize this as follows. Replace $\til \s$ and $\til M$ by quasi-isometric models in the form of Cayley graphs $\G_{\pi_1(\s)}$ and $\G_{\pi_1(M)}$ respectively. Let us denote $\G_{\pi_1(\s)}$ by $Y$ and $\G_{\pi_1(M)}$ by $X$. The projection of $ \til{M}$ to the base $\R$ is discretized accordingly giving a model that can be thought of as (and is quasi-isometric to)  a {\it tree $T$ of spaces}, where 
\begin{enumerate}
	\item $T$ is the simplicial tree underlying $\R$ with vertices at $\Z$.
	\item All the vertex and edge spaces are (intrinsically)  isometric to $Y$.
	\item The edge space to vertex space inclusions are qi-embeddings.
\end{enumerate}

We summarize this by saying that $X$ is a tree $T$ of spaces satisfying the {\it qi-embedded condition} \cite{BF}.

Given a geodesic segment $[a,b]=\lambda = \lambda_0 \subset Y$, we  now sketch the promised construction of 
the {\bf ladder} $\LL_\lambda \subset X$ containing $\lambda$. Index the vertices by $n \in \Z$.
Since the edge-to-vertex space inclusions are quasi-isometries, this induces a quasi-isometry $\phi_n$ from the vertex space $Y_n$ to the vertex space $Y_{n+1}$ for $n\geq 0$. A similar quasi-isometry $\phi_{-n}$ exists from $Y_{-n}$ to the vertex space $Y_{-(n+1)}$. These quasi-isometries are defined on the vertex sets of $Y_n$, $n \in \Z$. The map $\phi_n$ induces a map $\Phi_n$ from geodesic segments in $Y_n$ to geodesic segments in $Y_{n+1}$ for $n\geq 0$ by sending a geodesic in $Y_n$ joining $a,b$ to a geodesic in $Y_{n+1}$ joining $\phi_n(a),\phi_n(b)$. Similarly, for $n \leq 0$.
Inductively define:

\begin{itemize}
	\item	$\lambda_{j+1} = \Phi_j (\lambda_j )$ for $j \geq 0$,
	\item	$\lambda_{-j-1} = \Phi_{-j} (\lambda_{-j} )$ for $j \geq 0$,
	\item 	$\LL_\lambda = \bigcup_j \lambda_j$.
\end{itemize}

$\LL_\lambda$ turns out to be quasiconvex in $X$.
To prove this, we construct a coarsely Lipschitz retraction $\Pi_\lambda: \bigcup_j Y_j \to \LL_\lambda$ as follows.

On $Y_j$ define $\pi_j (y)$ to be a nearest-point projection of $y$ onto $\lambda_j$ and define	
$$\Pi_\lambda (y) = \pi_j (y),  \, for \, y \in Y_j.$$

The following theorem asserts that $\Pi_\lambda$ is coarsely Lipschitz.

\begin{theorem}\cite{mitra-ct, mitra-trees, mahan-bddgeo}
	There exists $C\geq 1$ such that for any geodesic $\lambda \subset Y$,
	$$d_X(\Pi_\lambda (x),{\Pi_\lambda}(y))\leq C d_X(x,y)$$ for $x, y
	\in \bigcup_i Y_i$.
	\label{lipret}
\end{theorem}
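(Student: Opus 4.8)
The plan is to prove the coarse Lipschitz estimate for $\Pi_\lambda$ by reducing the global statement to a statement about a single step between adjacent vertex spaces, and then iterating. First I would observe that it suffices to treat $x,y$ with $d_X(x,y) \le 1$, i.e.\ $x,y$ either in the same vertex space $Y_j$ or in adjacent ones $Y_j, Y_{j+1}$: a general geodesic in $X$ from $x$ to $y$ can be subdivided into unit-length (or bounded-length) sub-segments whose endpoints satisfy this, and the estimate then accumulates linearly along the subdivision. So the whole theorem comes down to two local claims.

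The first local claim, for $x,y$ in the same $Y_j$: here $\Pi_\lambda(x) = \pi_j(x)$ and $\Pi_\lambda(y) = \pi_j(y)$ are nearest-point projections onto $\lambda_j \subset Y_j$, and since $Y_j$ is intrinsically $\delta$-hyperbolic, Lemma \ref{npplip} gives $d_{Y_j}(\pi_j(x),\pi_j(y)) \le C\, d_{Y_j}(x,y)$; since the vertex-space inclusion $Y_j \hookrightarrow X$ is a qi-embedding, this transfers (up to adjusting constants) to a bound on $d_X$. The second local claim, for $x \in Y_j$ and $y \in Y_{j+1}$ with $d_X(x,y)$ bounded: here one compares $\pi_j(x) \in \lambda_j$ with $\pi_{j+1}(y) \in \lambda_{j+1} = \Phi_j(\lambda_j)$. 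Because $y$ is within bounded $X$-distance of $x$, and $\phi_j$ is the quasi-isometry realizing the edge-to-vertex identification, $y$ is within bounded distance (in $Y_{j+1}$) of $\phi_j(x)$; then $\pi_{j+1}(y)$ is within bounded distance of $\pi_{j+1}(\phi_j(x))$ by Lemma \ref{npplip} again; and $\pi_{j+1}(\phi_j(x))$ — a nearest-point projection of $\phi_j(x)$ onto $\Phi_j(\lambda_j)$ — is within bounded distance of $\phi_j(\pi_j(x))$ by Lemma \ref{nppqi} (nearest-point projections and quasi-isometries almost commute). Chaining these, $\Pi_\lambda(y) = \pi_{j+1}(y)$ is within a uniformly bounded $X$-distance of $\phi_j(\pi_j(x))$, which is within bounded $X$-distance of $\pi_j(x) = \Pi_\lambda(x)$. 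Hence the one-step displacement of $\Pi_\lambda$ is uniformly bounded, which is exactly what is needed.

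I expect the main obstacle to be uniformity of constants, in two senses. First, the quasi-isometry constants of $\phi_n$ and the hyperbolicity constants of the $Y_n$ must be independent of $n$ — this is guaranteed here because all vertex spaces are \emph{intrinsically isometric} to the fixed model $Y$ and the edge maps come from a fixed finite datum (the monodromy of the fibration), so Lemma \ref{nppqi} can be invoked with a single pair $(K,\epsilon)$ and yields a single $C$. Second, and more delicate, the constant $C$ in the conclusion must not depend on the geodesic $\lambda$ nor on how far $Y_j$ sits from the base vertex: the argument above only uses the single-step comparison and the fixed constants, so the bound on $d_X(\Pi_\lambda(x),\Pi_\lambda(y))$ for $d_X(x,y)\le 1$ is genuinely uniform, and summing over a subdivision of a general geodesic gives the linear bound $C\,d_X(x,y)$ (after absorbing an additive error into the multiplicative one, using that $d_X(x,y)\ge 1$ in the nontrivial case). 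Writing out the bookkeeping of how Lemma \ref{npplip} and Lemma \ref{nppqi} interact across an edge — in particular tracking that a nearest-point projection in $Y_{j+1}$ of a point near $\phi_j(x)$ stays near $\phi_j(\pi_j(x))$ — is the technical heart, but it is exactly the situation those two lemmata were set up to handle.
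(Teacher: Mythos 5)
Your proposal is correct and follows essentially the same route as the paper's own sketch: reduce to the case $d_X(x,y)=1$, handle two points in the same fiber $Y_j$ by Lemma \ref{npplip}, and handle the adjacent-fiber case by replacing $y$ with $\phi_j(x)$ up to bounded error and invoking Lemma \ref{nppqi} to see that projections and the quasi-isometry $\phi_j$ almost commute. One small slip: the inclusion $Y_j \hookrightarrow X$ is \emph{not} a qi-embedding in this setting (vertex spaces are heavily distorted in $X$ --- that is precisely why Cannon--Thurston maps are nontrivial here), but you only need the trivial inequality $d_X \le d_{Y_j}$ to transfer the intrinsic estimate, so the argument stands as written.
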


\noindent {\bf Sketch of Proof:}\\
The proof requires only the hyperbolicity of $Y$, but not that of $X$. It suffices to show that for $d_X(x,y)=1$, $d_X(\Pi_\lambda (x),{\Pi_\lambda}(y))\leq C$. Thus $x,y$ may be thought of as 
\begin{enumerate}
	\item either lying in the same $Y_j$. This case follows directly from Lemma \ref{npplip}.
	\item or lying vertically one just above the other. Then (up to a bounded amount of error), we can assume without loss of generality, that $y=\phi_j(x)$. This case now follows from Lemma \ref{nppqi}. \hfill $\Box$
\end{enumerate}

Since a coarse Lipschitz retract of a hyperbolic metric space is quasiconvex, we immediately have:

\begin{cor}\label{qcladder}
	If $(X, d_X)$ is hyperbolic, there exists $C\geq 1$ such that for any $\lambda$, $\LL_\lambda$ is $C-$quasiconvex. 
\end{cor}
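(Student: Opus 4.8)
The plan is to obtain Corollary~\ref{qcladder} as an immediate consequence of Theorem~\ref{lipret} together with the standard principle that a coarsely Lipschitz retract of a $\delta$-hyperbolic metric space is quasiconvex, with quasiconvexity constant depending only on $\delta$ and the coarse-Lipschitz constant. The first thing I would check is that $\Pi_\lambda$ is a genuine retraction onto $\LL_\lambda$: if $z\in\LL_\lambda$ then $z\in\lambda_j$ for some $j$, and the nearest point of $\lambda_j$ to $z$ is $z$ itself, so $\Pi_\lambda|_{\LL_\lambda}=\mathrm{id}$. In view of Theorem~\ref{lipret}, $\Pi_\lambda\colon\bigcup_i Y_i\to\LL_\lambda$ is therefore a coarsely Lipschitz retraction with a constant $C$ that does \emph{not} depend on $\lambda$; since $\bigcup_i Y_i$ is coarsely dense in $X$, a bounded adjustment turns this into a coarsely Lipschitz retraction of all of $X$ onto $\LL_\lambda$ with the same constant.

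It then remains to run the general argument. Given $x,y\in\LL_\lambda$ and a geodesic $[x,y]_X$ in the hyperbolic space $X$, I would subdivide $[x,y]_X$ into sub-segments of length at most $1$ and push the subdivision points forward by $\Pi_\lambda$; by the estimate of Theorem~\ref{lipret} consecutive images lie within $C$ of one another, and the two extreme points map to $x$ and $y$. Joining consecutive images by geodesics of length $\le C$ produces a path $\sigma\subset\LL_\lambda$ from $x$ to $y$. The now-standard step — carried out for the ladder in \cite{mitra-ct,mitra-trees} — is to show, using slimness of geodesic triangles in $X$, that $[x,y]_X$ lies in a $Q$-neighbourhood of $\sigma$, and hence of $\LL_\lambda$, where $Q=Q(\delta_X,C)$. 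Note that this step, unlike Theorem~\ref{lipret} (which used only hyperbolicity of $Y$), genuinely exploits the hyperbolicity of $X$ that is hypothesised in Corollary~\ref{qcladder}.

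The one point needing care is uniformity: one must observe that $Q$ depends only on $\delta_X$ and on the coarse-Lipschitz constant $C$, and in particular not on $\lambda$ nor on $d_X(x,y)$. This is automatic, because Theorem~\ref{lipret} delivers a single $C$ valid for every geodesic $\lambda\subset Y$ while $\delta_X$ is fixed once and for all. Accordingly, the step I expect to be the real obstacle — proving that push-forward under the maps $\Phi_i$ defines a coarsely Lipschitz retraction with a constant independent of $\lambda$ — is exactly what Theorem~\ref{lipret} (built from Lemmata~\ref{npplip} and \ref{nppqi}) already supplies; granting it, Corollary~\ref{qcladder} follows at once from the soft fact that coarse Lipschitz retracts of hyperbolic spaces are uniformly quasiconvex.
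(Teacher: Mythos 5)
Your proposal is correct and follows exactly the paper's route: Corollary~\ref{qcladder} is deduced immediately from Theorem~\ref{lipret} together with the standard fact that a coarsely Lipschitz retract of a hyperbolic space is quasiconvex, with constants depending only on $\delta_X$ and the retraction constant (the paper simply cites this fact, while you additionally sketch its proof and check that $\Pi_\lambda$ restricts to the identity on $\LL_\lambda$). The only cosmetic caveat is that the uniform quasiconvexity constant in that standard lemma is usually obtained via exponential divergence of geodesics (or an equivalent stability argument) rather than bare slimness of triangles, but this concerns the folklore lemma both you and the paper invoke, not the deduction itself.
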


Note here that we have not used any feature of $Y$ except its hyperbolicity. In particular, we do not need the specific condition that $Y = \til \s$. We are now in a position to prove a generalization of Theorem \ref{ctthm}.

\begin{theorem}\cite{mahan-bddgeo}
	Let $(X,d)$ be a hyperbolic tree ($T$) of hyperbolic metric spaces satisfying the
	qi-embedded condition, where $T$ is 
	$\R$ or $[0, \infty )$ with  vertex and edge sets $Y_j$ as above, 
	$j\in \Z$. Assume (as above) that the edge-to-vertex inclusions are quasi-isometries.
	For	$i : Y_0 \to X$ there is a Cannon-Thurston map ${\hat{{i}}} : \widehat{Y_0}
	\rightarrow \widehat{X}$.
	\label{ctthm2}
\end{theorem}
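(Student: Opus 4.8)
The plan is to verify the criterion of Lemma~\ref{crit_hyp} for the inclusion $i : Y_0 \to X$, concentrating the work in a single distance estimate; everything else is a formal consequence of the quasiconvexity of the ladder (Corollary~\ref{qcladder}). Fix the basepoint $y \in Y_0$, write $x = i(y) \in X$, and let $\lambda = [a,b]$ be a geodesic in $Y_0$ with $d_{Y_0}(y,\lambda) \ge N$. Form the ladder $\LL_\lambda = \bigcup_j \lambda_j \subset X$ as in Section~\ref{hyplad}. By Corollary~\ref{qcladder} it is $C$-quasiconvex in the hyperbolic space $X$, and since $i(a) = a$, $i(b) = b$ lie on $\lambda_0 \subset \LL_\lambda$, every geodesic $[i(a),i(b)]_X$ is contained in the $C_1$-neighbourhood of $\LL_\lambda$ for a uniform $C_1$. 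Thus it suffices to produce a proper function $M_1$ with $d_X(x, \LL_\lambda) \ge M_1(N)$: then $M(N) := \max\{M_1(N) - C_1,\, 0\}$ is the function required by Lemma~\ref{crit_hyp}, and we obtain the Cannon-Thurston map $\hat i : \widehat{Y_0} \to \widehat X$.

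To produce $M_1$, I would argue by descending the tree. Suppose $z \in \LL_\lambda$ with $d_X(x,z) \le D$. Since the projection $X \to T$ is coarsely Lipschitz, there is $m = m(D)$ for which every $X$-geodesic from $x$ to such a $z$ stays inside the part $X_{[-m,m]}$ of $X$ over $[-m,m] \subset T$, and in particular $z \in \LL_\lambda \cap Y_j = \lambda_j$ with $|j| \le m$. Using the qi-embedded condition and the hypothesis that the edge-to-vertex inclusions are quasi-isometries, one constructs a \emph{vertical projection} $\Theta_m : X_{[-m,m]} \to Y_0$ which is the identity on $Y_0$, which on $Y_j$ ($|j| \le m$) is the appropriate composition $\psi_j$ of coarse inverses of the edge quasi-isometries (a quasi-isometry $Y_j \to Y_0$), and which is coarsely Lipschitz with constants depending only on $m$. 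Iterating the Morse Lemma along the rungs $\lambda_0, \lambda_1, \dots, \lambda_j$ of the ladder --- the same mechanism as in Lemma~\ref{nppqi}, run $|j|$ times --- shows that the quasigeodesic $\psi_j(\lambda_j)$ lies within distance $D_3(m)$ of $\lambda$ in $Y_0$, where $D_3(m)$ grows with $m$ but is finite for each $m$. Hence, for $z$ as above,
\[
d_{Y_0}(y,\lambda) \;\le\; d_{Y_0}\big(\Theta_m(x), \Theta_m(z)\big) + d_{Y_0}\big(\psi_j(z), \lambda\big) \;\le\; L(m)\,(D+1) + D_3(m) \;=:\; N_0(D),
\]
since $\Theta_m(x) = x = y$ and $\Theta_m(z) = \psi_j(z) \in \psi_j(\lambda_j)$. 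Therefore $d_{Y_0}(y,\lambda) > N_0(D)$ forces $d_X(x, \LL_\lambda) > D$; taking $M_1(N) := \max\{D \in \natls : N_0(D) < N\}$ --- non-decreasing, and proper because $N_0$ is everywhere finite --- gives $d_X(x,\LL_\lambda) \ge M_1(N)$, as sought. The same vertical projection, applied to a point $z \in Y_0$, shows moreover that $i : Y_0 \to X$ is proper, so that Lemma~\ref{crit_hyp} genuinely applies.

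The crux --- and the only point where the fibred structure is used, rather than just hyperbolicity of $X$ and of the $Y_j$ --- is the construction of $\Theta_m$ with a finite Lipschitz constant $L(m)$. A globally coarsely Lipschitz retraction $X \to Y_0$ cannot exist, since $Y_0$ need not be quasiconvex in $X$: already for a hyperbolic surface bundle over the circle the fibre is exponentially distorted. Restricting to bounded depth $m$ is precisely what controls this distortion, at the cost of constants $L(m)$, $D_3(m)$ that blow up with $m$; this is harmless because $m$ is itself bounded in terms of $D$. This bounded-depth vertical projection is by now standard for trees of spaces satisfying the qi-embedded condition (cf.\ \cite{BF}), and once it is in hand the rest of the argument is the soft combination of quasiconvexity of $\LL_\lambda$ with the Morse Lemma carried out above. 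When $T = [0,\infty)$ the ladder and the vertical projection are simply one-sided, with no further change.
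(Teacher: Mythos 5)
Your argument is correct, and it sits inside the same framework as the paper's proof (reduce via Lemma~\ref{crit_hyp} and the quasiconvexity of the ladder, Corollary~\ref{qcladder}, to showing that $\LL_\lambda$ stays far from the basepoint), but the key distance estimate is carried out differently. The paper descends the ladder rung by rung \emph{inside $X$}: there is a single uniform constant $C_0$ such that every point of $\lambda_j$ lies within $X$-distance $C_0$ of $\lambda_{j-1}$, so a point of $\LL_\lambda$ at height $j$ is within $C_0 j$ of $\lambda$ in $X$; combining this with the (quoted) properness of $Y_0\hookrightarrow X$, which puts $\lambda$ outside the $g(N)$-ball in $X$, and with the vertical lower bound $d_X(y_0,p)\ge j$, one gets $d_X(y_0,p)\ge \max\{g(N)-C_0j,\ j\}\ge g(N)/(C_0+1)$, an explicit, essentially linear proper function. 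You instead bound the height $|j|\le m(D)$ of any ladder point within $D$ of the basepoint and push everything back to $Y_0$ by a bounded-depth vertical projection $\Theta_m$ (compositions of coarse inverses of the fibre quasi-isometries), with constants $L(m)$, $D_3(m)$ that are allowed to blow up with $m$; since $m$ is controlled by $D$, this still yields a finite bound $N_0(D)$ on $d_{Y_0}(y,\lambda)$ and hence a proper (though quantitatively weaker, typically logarithmic, since composing quasi-isometries compounds constants) modulus $M_1$. Your route has the merit of proving, rather than quoting, the properness of $i:Y_0\to X$, and it avoids the max trick; its cost is that it leans on the hypothesis that the edge-to-vertex maps are honest quasi-isometries (needed to invert them), and that the $m$-dependent Lipschitz constant of $\Theta_m$ is asserted as standard rather than checked — the verification (adjacent fibres are compared through the edge space, so $\psi_{j+1}\circ\phi_j$ agrees coarsely with $\psi_j$) is routine and at the level of detail of the paper's own ``it is not hard to see,'' so I do not count it as a gap. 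Note also that the paper's per-rung estimate in the ambient metric is the mechanism that later survives electrification in the model-geometry sections, which is one reason the paper phrases the descent that way.
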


\begin{proof} Fix a basepoint ${y_0} \in Y_0$. By Lemma \ref{crit_hyp} and quasiconvexity of $\LL_\lambda$
	(Corollary \ref{qcladder}),
	it suffices to show 
	that for all $M\geq{0}$  there exists $N\geq{0}$
	such that if a geodesic segment 
	$\lambda$ lies outside the $N$-ball about $y_0\in{Y_0}$, then
	$\LL_{\lambda}$ lies outside the $M$-ball around ${y_0} \in {X}$. Equivalently, we need a proper function $M(N): \natls \to \natls$.
	
	Since $Y_0$ is properly embedded in $X$, there exists a proper function $g: \natls \to \natls$ such that 
	$\lambda$ lies outside the $g(N)$-ball about $y_0\in X$.
	
	Let $p$ be any point on $\LL_{\lambda}$. Then $p=p_j\in Y_j$ for some $j$. Assume without loss of generality that $j \geq 0$. It is not hard to see that there exists $C_0$, depending only on $X$, such that for any such $p_j$, there exists $p_{j-1} \in Y_{j-1}$ with $d(p_j,p_{j-1}) \leq C_0$. It follows inductively that there exists $y\in \lambda =\lambda_0$
	such that ${d_X}(y,p)\leq C_0j$. Hence, by the triangle inequality, $d_X({y_0},p)  \geq  g(N)-C_0j$.
	
	Next, looking at the `vertical direction', $d_X({y_0},p)  \geq j$ and hence 	$${d_X}({y_0},p)  \geq  \, {\rm max}({g(N)-C_0j}, j) \geq  \frac{g(N)}{A+1}.$$
	
	Defining $M(N) =  \frac{g(N)}{A+1}$, we see that $M(N)$ is a proper function of $N$ and we are done.
\end{proof}

\subsection{Quasiconvexity}\label{qcflow} The structure of Cannon-Thurston maps in Section \ref{fiber} can be used to establish quasiconvexity of certain subgroups of $\pi_1(M)$. Let $H \subset \pi_1(\s)$ be a finitely generated infinite index subgroup of the fiber group. Then, due to the LERF property for surface subgroups, a Theorem of Scott \cite{scott-lerf}, there is a finite sheeted cover where $H$ is geometric, i.e.\ it is carried by a proper embedded subsurface of (a finite sheeted cover of) $\s$. But such a proper subsurface cannot carry a leaf of the stable or unstable foliations $\FF_s$ or $\FF_u$. This gives us the following Theorem of Scott and Swarup:

\begin{theorem}\cite{scottswar} Let $M$ be a closed hyperbolic 3-manifold fibering over the circle with fiber $\s$.
Let $H \subset \pi_1(\s)$ be a finitely generated infinite index subgroup of the fiber group in $\pi_1(M)$.  Then $H$ is quasiconvex in  $\pi_1(M)$.
\label{scottswar}
\end{theorem}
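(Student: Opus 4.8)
The plan is to exploit the structure of the Cannon-Thurston map for the fiber inclusion, together with a covering-space trick to make $H$ geometric, exactly as sketched in the paragraph preceding the statement. First I would invoke Scott's LERF theorem for surface groups: since $H \subset \pi_1(\s)$ is finitely generated, there is a finite-sheeted cover $\s' \to \s$ so that $H$ is carried by a compact embedded subsurface $\Sigma_H \subsetneq \s'$ with $\chi(\Sigma_H) > \chi(\s')$; since quasiconvexity is unaffected by passing to a finite-index overgroup, it suffices to work in $\pi_1(M')$ for the corresponding finite cover $M'$ of $M$, which still fibers over the circle with fiber $\s'$. So I would reduce at once to the case where $H = \pi_1(\Sigma_H)$ for a proper essential subsurface $\Sigma_H$ of the fiber.

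Next I would bring in the Cannon-Thurston machinery. By Theorem~\ref{ctthm2} (applied with $Y_0 = \til{\s'} \simeq \Hyp^2$), the inclusion $\til\s' \hookrightarrow \til{M'} \simeq \Hyp^3$ extends to a continuous $\boundary i : S^1 \to S^2$, and the point-identifications of $\boundary i$ are governed by the stable/unstable laminations $\FF_s, \FF_u$ of the monodromy pseudo-Anosov: $\boundary i(p) = \boundary i(q)$ iff $p,q$ are endpoints of a leaf, or of a complementary polygon, of $\FF_s$ or $\FF_u$ (this is the fibered case of Theorem~\ref{ctsurf}). The key geometric input is that a proper essential subsurface cannot carry a leaf of $\FF_s$ or $\FF_u$: the laminations $\FF_s,\FF_u$ are \emph{filling} (minimal and filling, as ending laminations of a fibered manifold), so every leaf intersects $\boundary\Sigma_H$ transversally, and hence no leaf is entirely contained in $\Sigma_H$. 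Consequently the limit set $\Lambda_H \subset S^1$ of $\til{\Sigma_H}$ (a Cantor set in $S^1$, since $H$ has infinite index in $\pi_1(\s')$) is mapped \emph{injectively} into $S^2$ by $\boundary i$: two distinct points of $\Lambda_H$ can only be identified if they are the endpoints of a leaf or complementary-polygon side of $\FF_s$ or $\FF_u$, and such a configuration would force a leaf, or a diagonal of a complementary polygon, to have both endpoints in the limit set of the subsurface group — which, because the laminations fill and $\Sigma_H$ is proper, cannot happen (any such geodesic would be carried by $\Sigma_H$ up to bounded Hausdorff distance, contradicting that it crosses $\boundary\Sigma_H$).

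From injectivity of $\boundary i$ on $\Lambda_H$ I would deduce quasiconvexity via a standard contrapositive argument using Lemma~\ref{crit_hyp}. If $H$ were \emph{not} quasiconvex in $\pi_1(M')$, then (the contrapositive of the Cannon-Thurston criterion, or more directly a limiting argument) there would be a sequence of geodesic segments $[a_n,b_n]$ in $\til{\Sigma_H}$ with endpoints converging to distinct points $p \ne q$ of $\Lambda_H$, yet whose $\Hyp^3$-geodesics $[i(a_n),i(b_n)]$ come within bounded distance of the basepoint; passing to the limit, $[i(a_n),i(b_n)]$ converges to a bi-infinite geodesic in $\Hyp^3$ with both endpoints equal to a single point of $S^2$, forcing $\boundary i(p) = \boundary i(q)$ and contradicting injectivity on $\Lambda_H$. (Equivalently: $\Lambda_H$ is $H$-invariant and compact; $\boundary i$ conjugates the $H$-action on $\Lambda_H \subset S^1$ to its action on $\boundary i(\Lambda_H) \subset S^2$, which by injectivity is the limit set of $H$ acting on $\Hyp^3$, and a Cantor limit set on which the group acts cocompactly-on-the-hull means $H$ is convex-cocompact, hence quasiconvex.) Finally, quasiconvexity of $H$ in $\pi_1(M')$ pulls back to quasiconvexity in the finite-index subgroup $\pi_1(M) \cap \pi_1(M')$, hence in $\pi_1(M)$.

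The main obstacle is the middle step: rigorously showing that the filling property of $\FF_s,\FF_u$ prevents any leaf (or complementary-polygon diagonal) from having both endpoints in $\Lambda_H$. This requires relating "a geodesic is carried by the subsurface $\Sigma_H$" to "both its endpoints lie in $\Lambda_H$" — i.e., a Caratheodory-type or nearest-point-projection estimate identifying $\Lambda_H$ with the boundary of the convex hull of $\til{\Sigma_H}$ — and then invoking that a filling lamination has no leaf disjoint from the (nonperipheral part of the) boundary of a proper subsurface. Everything else is either bookkeeping with finite covers or the now-standard Cannon-Thurston criterion.
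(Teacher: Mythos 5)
Your overall route is the same as the paper's: use LERF to make $H$ geometric in a finite cover, observe that a proper essential subsurface cannot carry a leaf (or a diagonal of a complementary ideal polygon) of the filling laminations $\FF_s,\FF_u$, and conclude that the Cannon--Thurston map $\partial i$ is injective on $\Lambda_H\subset S^1$; that part of your argument, including the finite-cover bookkeeping, is essentially right. The genuine gap is the final step, where you pass from injectivity of $\partial i$ on $\Lambda_H$ to quasiconvexity of $H$. Your contrapositive negates the wrong property: segments in $\til{\Sigma}_H$ exiting every ball while their $\Hyp^3$-geodesics stay near the basepoint is the negation of the Cannon--Thurston criterion of Lemma \ref{crit_hyp} for the inclusion of $H$, so at best it would contradict the \emph{existence} of a Cannon--Thurston map for $\Gamma_H\hookrightarrow \Gamma_{\pi_1(M')}$, not quasiconvexity --- the full fiber group itself satisfies that criterion while being exponentially distorted and non-quasiconvex. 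The argument is also internally inconsistent: a sequence of geodesics meeting a fixed ball cannot converge to a degenerate geodesic ``with both endpoints equal to a single point of $S^2$''; if $\partial i(p)\neq\partial i(q)$ the limit is just the geodesic joining these two points and no contradiction arises. The parenthetical alternative is circular: cocompactness of the $H$-action on the hull of $\partial i(\Lambda_H)$ \emph{is} convex cocompactness, the thing to be proved, and injectivity of $\partial i$ at a point does not imply that point is conical (the paper itself records $\Lambda_r\subsetneq\Lambda_m^c$), so no soft limit-set argument closes this.

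What is actually needed --- and what the paper is pointing to with ``the structure of Cannon-Thurston maps in Section \ref{fiber}'' --- is to rerun the ladder machinery: assuming $H$ is not quasiconvex, take geodesics in $\pi_1(M')$ joining points of $H$ containing points far from $H$, locate them along the ladders $\LL_\lambda$ over fiber geodesics, and, after translating by powers of the monodromy and passing to Hausdorff limits, extract a leaf of $\FF_s$ or $\FF_u$ with both endpoints in $\Lambda_H$; your filling/LERF argument then yields the contradiction. In other words, the step you flag as ``the main obstacle'' (no leaf carried by $\Sigma_H$) is the easy half; the hard half, which your sketch does not supply, is producing such a leaf from the failure of quasiconvexity.
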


Theorem \ref{scottswar} has been generalized considerably to the context of convex cocompact subgroups of the mapping class group and $Out(F_n)$ by a number of authors \cite{dkl,kdt,dt1,dt2,mahan-rafi}.

\section{Kleinian surface groups: Model Geometries}\label{models}
In this section  we shall describe a sequence of models for degenerate ends of 3-manifolds following \cite{minsky-bddgeom, minsky-jams, mahan-bddgeo, mahan-ibdd, mahan-amalgeo} 
 and \cite{minsky-elc1, minsky-elc2, mahan-split} and indicate  how to generalize the ladder construction of Section \ref{hyplad} incorporating electric geometry \cite{farb-relhyp}. Let $X$ be a hyperbolic metric space, e.g.\ $\Hyp^3$. Let $\HH_X$ be a collection of disjoint convex subsets. Roughly speaking, electrification equips each element of $\HH_X$ with the zero metric, while preserving the metric on $X \setminus (\bigcup_{H \in \HH_X} H)$. The resulting electrified space $\EXH$ is still Gromov hyperbolic under extremely mild conditions and  hyperbolic geodesics in $X$ can be recovered from electric geodesics in the electrified space $\EXH$. This will allow us to establish the existence of Cannon-Thurston maps. We shall focus on closed surfaces and follow the summary in \cite{mahan-lecuire} for the exposition.

The topology of each building block is simple: it is homeomorphic to $S \times [0,1]$, where $S$ is a closed
surface of genus greater than one. Geometrically, 
the top and bottom boundary components in the first three model geometries are uniformly bi-Lipschitz to a fixed hyperbolic structure on $S$. Assume therefore that $S$ is equipped with such a fixed hyperbolic structure. We do so henceforth. The different types of geometries of the blocks make for different model geometries of ends.

\begin{defn} 
	A model $E_m$ is said to be built up of blocks of some prescribed geometries {\bf glued end to end}, if 
	\begin{enumerate}
		\item 
		$E_m $ is homeomorphic to $S \times [0, \infty)$
		\item There exists $L \geq 1$ such that $S \times [i, i+1]$ is  $L-$bi-Lipschitz to a block of one of the three prescribed geometries: bounded, i-bounded or amalgamated (see below).
	\end{enumerate} 
	$S \times [i, i+1]$ will be called the {\bf $(i+1)$th block} of the model $E_m$.
	
	The {\bf thickness} of the \bf $(i+1)$th block is the length of the shortest path between $S \times \{ i \}$
	and $S \times \{ i+1 \}$ in $S \times [i, i+1] (\subset E_m)$.
\end{defn}

\subsection{Bounded geometry}\label{bddgeo}

Minsky \cite{minsky-bddgeom, minsky-jams}
calls an end $E$ of a hyperbolic 3-manifold to be of  bounded geometry
if there are no arbitrarily short
closed geodesics in $E$.

\begin{defn}
	Let $B_0 = S \times [0,1]$ be given the product metric.
	If $B$ is $L-$bi-Lipschitz homeomorphic to $B_0$, it is called an {\bf $L-$thick  block}. 
	
	An end $E$ is said to have a model of bounded geometry if
	there exists $L$ such that
	$E$ is bi-Lipschitz homeomorphic to a model manifold $E_m$ 
	consisting of gluing  $L-$thick blocks end-to-end.
\end{defn}

It follows from work of  Minsky 
\cite{minsky-top}
that if $E$ is of bounded geometry, it has a model of bounded geometry. The existence of Cannon-Thurston maps in this setup is then a replica of the proof of Theorem \ref{ctthm2}.

\subsection{i-bounded Geometry} \label{ibdd}

\begin{defn} \cite{mahan-ibdd}
	An end $E$ of a hyperbolic 3-manifold $M$ has {\bf i-bounded geometry} 
	if the boundary torus of every Margulis tube in $E$ has bounded diameter. 
\end{defn}

We give an alternate description.
Fix a closed hyperbolic surface $S$.  Let
 $\CC$ be  a finite
 collection of (not necessarily disjoint)  simple closed geodesics on
$S$. Let
$N_\epsilon ( \sigma_i )$  denote an $\epsilon$ neighborhood of
$\sigma_i$, $\sigma_i \in \CC, $ were $\ep$ is 
small enough to ensure 
that lifts of  $N_\epsilon ( \sigma_i )$ to 
$\widetilde{S}$ are disjoint.

\begin{defn}\label{ibdddefn}
	Let $I = [0,3]$. Equip $S \times I$ with the product metric. Let
	$B^c = (S \times I - \cup_j N_{\epsilon} ( \sigma_j ) \times
	[1,2])$,
	equipped  with the induced path-metric. Here $\{\sigma_j\}$ is a subcollection of $\CC$ consisting of disjoint curves.
	Perform Dehn filling on some $(1,n)$ curve on each resultant torus component of
	the boundary of $B^c$ (the integers $n$ are quite arbitrary and may vary: we omit subscripts for expository ease). We call $n$ the {\bf twist coefficient}. Foliate the relevant torus boundary component of
	$B^c$ by translates of $(1,n)$ curves. Glue in a 
	solid torus $\Theta$, which we refer to as a {\bf Margulis tube}, with a  hyperbolic metric foliated by totally geodesic 
	disks bounding the $(1,n)$ curves.

	The resulting copy of $S \times I$ thus obtained, equipped with the above metric  is called a {\bf thin  block}. 
\end{defn} 

\begin{defn} A
	model manifold $E_m$ of {\bf i-bounded geometry} is built out of
	gluing  $L-$thick and thin blocks end-to-end (for some $L$) (see  schematic diagram below where the black squares indicate Margulis tubes and the horizontal rectangles indicate the blocks).
\end{defn}

\begin{center}
	
	\includegraphics[height=4cm]{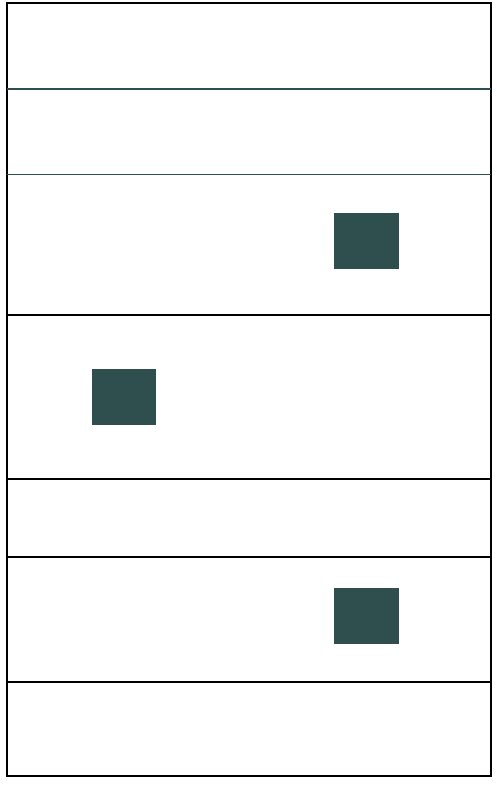}

\end{center} 
It follows from work in \cite{mahan-ibdd} that

\begin{prop}\label{ibdd-model}
	An end $E$ of a hyperbolic 3-manifold $M$ has
	{\bf i-bounded geometry} if and only if
	it is bi-Lipschitz homeomorphic to a  model manifold $E_m$ of i-bounded geometry.
\end{prop}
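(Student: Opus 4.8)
The plan is to prove Proposition \ref{ibdd-model} by establishing the two implications separately, with the model-building direction being the substantive one. For the ``only if'' direction, suppose $E$ has i-bounded geometry in the sense that every Margulis tube in $E$ has boundary torus of uniformly bounded diameter. The first step is to invoke the tameness of $E$ (Thurston--Bonahon) together with Minsky's model-manifold technology to write $E$ as a union of geometrically controlled pieces: one splits $E$ along a sequence of level surfaces $S \times \{t_i\}$ (realized by pleated surfaces or by the bounded-geometry product regions of Minsky's model) so that each slab $S \times [t_i, t_{i+1}]$ contains at most a controlled configuration of Margulis tubes. The key point is that, away from the Margulis tubes, the thick part of $E$ has injectivity radius bounded below by the Margulis constant, so standard compactness (Arzela--Ascoli / geometric limits of pointed manifolds with injectivity radius bounds) produces a uniform $L$ such that each such slab, with its tubes drilled out, is $L$-bi-Lipschitz to a standard piece $B^c = (S \times I) \setminus \bigcup_j N_\epsilon(\sigma_j) \times [1,2]$; the hypothesis on torus diameters is exactly what forces the curves $\sigma_j$ to lie in the fixed finite collection $\CC$ (only finitely many homotopy classes of simple closed geodesics on $S$ have length below a definite bound, and a bounded-diameter torus caps a bounded-length core curve). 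Re-gluing the solid-torus Margulis tubes $\Theta$ with their totally geodesic meridian-disk foliation, and observing that the bi-Lipschitz constants on the tubes are again uniform because the tube geometry is determined up to bounded distortion by the length and twist of the core geodesic, yields the desired model manifold $E_m$ of i-bounded geometry.

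For the ``if'' direction the argument is essentially a check: if $E$ is bi-Lipschitz homeomorphic to such an $E_m$, then every short closed geodesic of $E$ corresponds to the core of one of the glued-in Margulis tubes $\Theta$, and the boundary torus of the corresponding Margulis tube in $E$ is the bi-Lipschitz image of $\partial \Theta$. Since $\partial\Theta$ sits inside a thin block built from curves in the fixed finite set $\CC$ with an $\epsilon$-neighborhood of fixed size, $\diam(\partial\Theta)$ is bounded independently of the block, and the bi-Lipschitz constant $L$ then bounds $\diam$ of the actual Margulis tube boundary in $E$. Hence $E$ has i-bounded geometry. One should also note in passing that a bi-Lipschitz homeomorphism, being in particular a quasi-isometry, carries Margulis tubes of $E_m$ to within bounded Hausdorff distance of Margulis tubes of $E$ and vice versa, so the correspondence between ``short geodesics'' on the two sides is genuinely bijective up to bounded ambiguity.

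The main obstacle, and the place where one must be careful, is the uniformity of the bi-Lipschitz constant $L$ across the (infinitely many) blocks of $E$. This is not automatic from tameness alone: a priori the twist coefficients $n$ and the combinatorics of which curves $\sigma_j \in \CC$ are simultaneously short can vary wildly from block to block. The resolution is that the i-bounded hypothesis converts precisely to the geometric statement needed — a torus of bounded diameter with a meridian disk of bounded area forces the ambient geometry of the block (thick part union tube) to lie in a compact family of pointed hyperbolic pieces, and compactness of that family gives the single $L$. Making this rigorous requires citing Minsky's a priori bounds and model-manifold results from \cite{minsky-bddgeom, minsky-top, minsky-elc1} to control the thick-part geometry between consecutive tubes, which is where the bulk of the real work (already carried out in \cite{mahan-ibdd}) resides; here we are content to outline it and refer to \cite{mahan-ibdd} for the details.
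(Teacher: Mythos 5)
Your overall skeleton (decompose the end along bounded-geometry level surfaces, use Margulis/compactness arguments for the part outside the tubes, match the tubes to the $(1,n)$-filled model solid tori $\Theta$, and check the converse directly) is consistent with how this equivalence is obtained in \cite{mahan-ibdd}; note that the survey itself gives no proof of Proposition \ref{ibdd-model} beyond attributing it to that paper, so this is the only meaningful comparison. The ``if'' direction as you state it is essentially fine.

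There is, however, a genuine error at exactly the point you single out as the crux. You claim that the i-bounded hypothesis yields ``a torus of bounded diameter with a meridian disk of bounded area,'' forcing the block \emph{including the tube} to lie in a compact family of pointed hyperbolic pieces, and that compactness of this family produces the uniform $L$. This cannot be right: a bound on the diameter of the boundary torus does not bound the length of the meridian class on that torus (a flat torus of small diameter can carry arbitrarily long primitive geodesics), hence does not bound the tube radius, the meridian disk area, or the length of the core geodesic from below. Indeed, i-bounded geometry is designed precisely to allow arbitrarily short core geodesics, which is why Definition \ref{ibdddefn} performs Dehn filling along $(1,n)$ curves with the twist coefficients $n$ ``quite arbitrary''; if the thick part union the tubes really ranged over a compact family, all core lengths would be bounded below, the end would have bounded geometry in Minsky's sense, and the thin blocks would be superfluous. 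The compactness argument is valid only for the tube complement $B^c$ (bounded-diameter tube boundaries plus the Margulis bound on the thick part put these pieces in a compact family); the tubes themselves must be treated separately, by showing that a hyperbolic Margulis tube whose boundary torus has uniformly bounded geometry and whose meridian is the $(1,n)$ class admits a uniformly bi-Lipschitz identification with the model tube $\Theta$ carrying the same filling slope, uniformly in $n$. Your earlier sentence (``the tube geometry is determined up to bounded distortion by the length and twist of the core geodesic'') points in the right direction, but the compactness justification you then give for the single constant $L$ is the step that fails and needs to be replaced by this tube-by-tube matching, as in \cite{mahan-ibdd}. A smaller imprecision: the core curve of a Margulis tube is automatically short, so ``a bounded-diameter torus caps a bounded-length core curve'' is not the point; what the diameter bound buys is that the split curves have bounded-length representatives on the adjacent bounded-geometry level surface, hence lie in a finite list $\CC$ with respect to that block's marking.
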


We give a brief indication of the construction of $\LL_\lambda$ and the proof of the existence of Cannon-Thurston maps in this case. First electrify all the Margulis tubes, i.e.\ equip them with a zero metric (see \cite{farb-relhyp} for details on relative hyperbolicity and electric geometry). This ensures that in the resulting electric geometry, each block is of bounded geometry. More precisely, there is a (metric) product structure on $S \times [0,3]$ such that each $\{x\} \times [0,3]$ has uniformly bounded length in the electric metric.

Further, since the curves in $\CC$ are electrified in a block, Dehn twists are isometries from $S \times \{1\}$ to 
$S \times \{2\}$ in a thin block. This allows the construction of $\LL_\lambda$ to go through as before and ensures that it is quasiconvex in the resulting electric metric. 

Finally given an electric geodesic lying outside large balls modulo Margulis tubes one can recover a genuine hyperbolic geodesic tracking it outside Margulis tubes. A relative version of the criterion of Lemma \ref{crit_hyp} can now be used to prove the existence of Cannon-Thurston maps.

\subsection{Amalgamation Geometry}\label{amalgeo}
Again, as in Definition \ref{ibdddefn}, 
start with a fixed closed hyperbolic surface $S$, a collection of simple closed curves $\CC$ and set $I=[0,3]$. Perform Dehn surgeries on the Margulis tubes corresponding to $\CC$ as before. 
Let $K = S \times [1,2] \subset S \times [0,3]$ and let $K^c = (S \times I - \cup_i N_{\epsilon} ( \sigma_i ) \times [1,2])$. 
Instead of fixing the product metric on the complement $K^c$ of Margulis tubes in $K$,
allow these complementary components to have {\it arbitrary geometry} subject only to the restriction that the geometries of $S \times \{1,2\}$ are fixed. Equip $S \times [0,1]$ and $S \times [2,3]$ with the product metrics.
The resulting block is said to be a {\bf block of  amalgamation geometry}. After lifting to the universal cover, complements of Margulis tubes in the lifts $\til{S} \times [1,2]$ are termed {\bf amalgamation components}. 

\begin{defn}\label{amalgeo-model}
	An end $E$ of a hyperbolic 3-manifold $M$ has
	{\bf amalgamated geometry} if 
	\begin{enumerate}
		\item 	it is bi-Lipschitz homeomorphic to a model manifold $E_m$ 
		consisting of gluing  $L-$thick and amalgamation geometry blocks end-to-end (for some $L$).
		\item Amalgamation components are (uniformly) quasiconvex in $\til E_m$.
	\end{enumerate}
\end{defn}

To construct the ladder $\LL_\lambda$ we electrify amalgamation components as well as Margulis tubes. This ensures that in the electric metric,
\begin{enumerate}
	\item Each amalgamation block has bounded geometry
	\item The mapping class element taking $S \times \{1\}$ to $S \times \{2\}$ induces an isometry of the electrified metrics.
\end{enumerate} 

Quasiconvexity of $\LL_\lambda$ in the electric metric now follows as before. To recover the data of hyperbolic geodesics from quasigeodesics lying close to $\LL_\lambda$, we use (uniform) quasiconvexity of amalgamation components and existence of Cannon-Thurston maps follows.

\subsection{Split Geometry} \label{splitt} We need now to relax the assumption  that the boundary components of model blocks are of (uniformly) bounded geometry. Roughly speaking, split geometry is a generalization of amalgamation geometry where
\begin{enumerate}
	\item A Margulis tube is allowed to travel through a uniformly bounded number of contiguous blocks and split them.
	\item The complementary pieces, now called split components, are quasiconvex in a somewhat weaker sense (see Definition \ref{sg} below).
\end{enumerate}

Each split component is allowed to contain Margulis tubes, called {\bf hanging tubes} that do not go all the way across from the top to the bottom, i.e.\ they
do not split both $\Sigma_i^s, \Sigma_{i+1}^s$.

A split component $B^s \subset B = S \times I$
is  topologically a product $S^s \times I$ for some, necessarily connected
$S^s (\subset S)$. However,  the upper and
lower boundaries of $B^s$ need only 
be split subsurfaces of $S^s$ to allow for hanging tubes
starting or ending (but not both) within the split block.

\begin{center}
	
	\includegraphics[height=4cm]{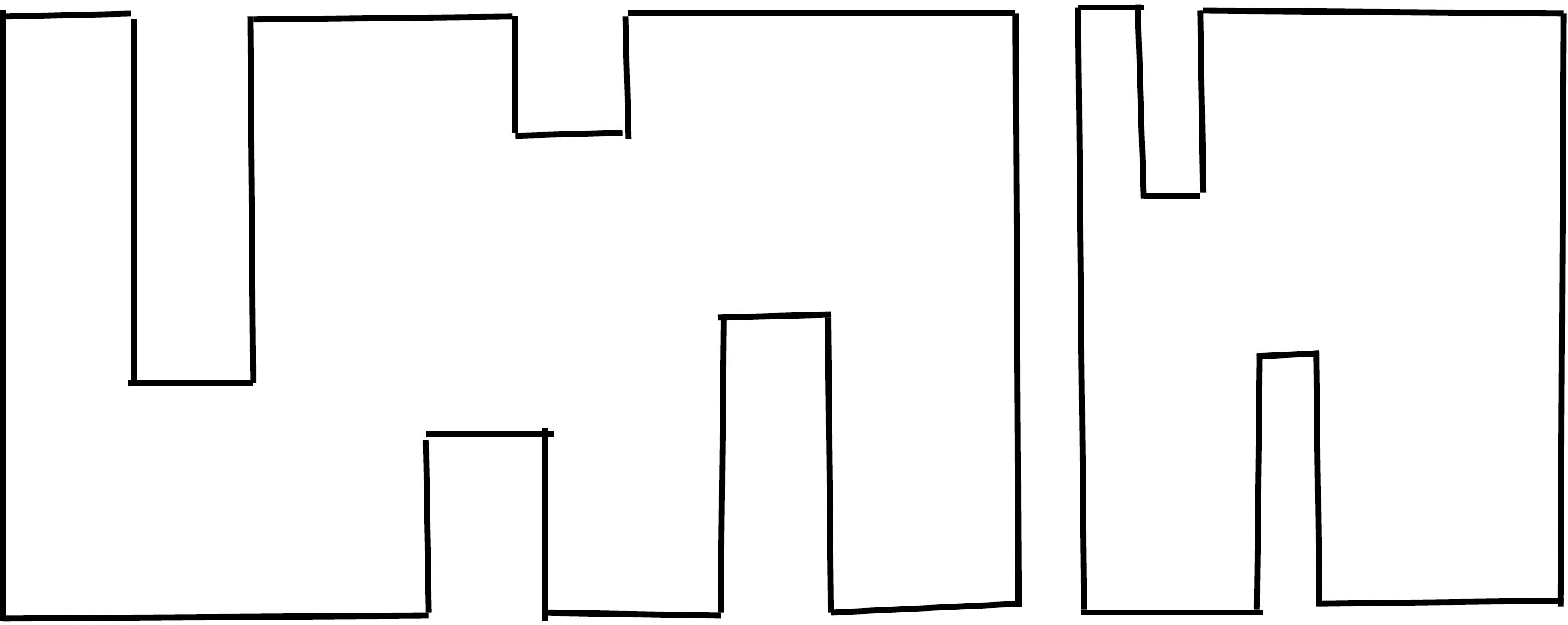}

	{\it Split Block with hanging tubes} 
	
\end{center}
An end $E$ of a hyperbolic 3-manifold $M$ has
{\bf weak split geometry} if 
 	it is bi-Lipschitz homeomorphic to a model manifold $E_m$ 
	consisting of gluing  $L-$thick and split blocks as above end-to-end (for some $L$).
Electrifying split components as in Section \ref{amalgeo}, we obtain a new electric metric called the {\bf graph metric} $d_G$ on $E$. 

\begin{definition}\label{sg}
	A model of weak split geometry is said to be of {\bf split geometry} if the convex hull of each split component has uniformly bounded $d_G-$diameter.
\end{definition}

\section{Cannon-Thurston Maps for Kleinian groups and Applications}
\subsection{Cannon-Thurston maps for degenerate manifolds} Let $M$ be a hyperbolic 3-manifold homotopy equivalent to a closed hyperbolic
surface $S$. Once we establish that $M$ has split geometry, the proof proceeds as in Section \ref{amalgeo} by electrifying split components, constructing a hyperbolic ladder $\LL_\lambda$ and finally recovering a hyperbolic geodesic from an electric one. We shall therefore dwell in this subsection on showing that any degenerate end has split geometry. We shall do this under two simplifying assumptions, directing the reader to \cite{mahan-split} (especially the Introduction) for a more detailed road-map.

We borrow extensively from the hierarchy and model manifold terminology and technology
of Masur-Minsky \cite{masur-minsky2} and Minsky \cite{minsky-elc1}.
The model manifold of \cite{minsky-elc1, minsky-elc2}  furnishes
a resolution, or equivalently, a sequence $\{ P_m \}$ of pants decompositions of $S$ exiting $E$ and hence a {\it hierarchy path}.
Let $\tau_m$ denote the simple multicurve on $S$ constituting  $P_m$.  The $P_m$ in turn  furnish split level surfaces
$\{ S_m \}$ exiting  $E$: a {\it split level surface} $S_m$ is a collection of (nearly) totally geodesic embeddings of the pairs of pants comprising $P_m$. Next, corresponding to the hierarchy path $\{ \tau_m \}$, there is a tight geodesic  in the curve complex $\CC(S)$ of the surface $S$ consisting of the bottom geodesic $\{\eta_i\}$ of the hierarchy. We proceed to extract a subsequence of the resolution $\tau_m$ using the bottom geodesic $\{\eta_i\}$ under two key simplifying assumptions:

\begin{enumerate}
	\item  For all $i$, the length 
	of exactly one curve in $\eta_i$ is sufficiently small, less than the
	Margulis constant in particular. Call it $\eta_i$ for convenience.
	\item Let $S_i$  correspond  to the first occurrence of the vertex $\eta_i$ in the resolution $\tau_m$. Assume further that the $S_i$'s are actually split surfaces and not just split level surfaces, i.e.\ they all  have injectivity radius uniformly bounded below,
\end{enumerate}

It follows that the Margulis tube $\eta_i$   splits both $S_i$ and $S_{i+1}$ and that  the tube $T_i$ is
trapped entirely  between $S_i$ and $S_{i+1}$. 
The product region $B_i$ between 
$S_i$ and $S_{i+1}$ is therefore a   split block for all $i$ and $T_i$ splits it. 
The model manifold thus obtained is one of weak split geometry. In a sense, this is a case of `pure split geometry', where all blocks have a split geometry structure (no thick blocks). To prove that the model is indeed of split geometry, it remains to establish the quasiconvexity condition of Definition \ref{sg}.

Let $K$ be a split component and $\til{K}$ an elevation to $\til E$. Let $v$ be a boundary short curve for the split component and let $T_v$ be the Margulis tube corresponding to $v$ abutting $K$. Denote the
hyperbolic convex hull by $CH(\til{K})$ and pass back to a quotient in $M$. A crucial observation that is needed here is the fact that any pleated surface has bounded $d_G-$diameter. This is because thin parts of pleated surfaces lie inside Margulis tubes that get electrified in the graph metric. It therefore suffices to show that any point in $CH({K})$ lies close to a pleated surface passing near the fixed    tube $T_v$. 
This last condition follows from the Brock-Bromberg drilling theorem \cite{brock-bromberg-density} and the fact that the convex core of a quasi-Fuchsian group is filled by pleated surfaces \cite{fan1}. This completes our sketch of a proof of the following main theorem of \cite{mahan-split}:

\noindent {\bf Theorem \ref{ctsurf}:}
	Let $\rho: \pi_1(S) \to \pslc$ be a simply or doubly degenerate (closed) surface Kleinian group. Then a Cannon-Thurston map exists.
	
	\medskip

It follows that the limit set of $\rho (\pi_1(S))$ is a continuous image of $S^1$ and is therefore locally connected.
As a first application of Theorem \ref{ctsurf},
we shall use the following Theorem of Anderson and Maskit \cite{and-mask} 
to prove that connected limit sets of Kleinian groups without parabolics are locally connected.  

\begin{theorem} \cite{and-mask} Let $\Gamma$ be an analytically finite Kleinian group with connected limit set. Then the limit set $\Lambda (\Gamma )$
	is locally connected if and only if every simply  degenerate surface subgroup of $\Gamma$ without accidental parabolics has locally connected
	limit set.
	\label{and-mask}
\end{theorem}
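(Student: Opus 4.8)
The plan is to reduce the local connectivity of $\Lambda(\Gamma)$ for a general analytically finite $\Gamma$ to the local connectivity of the limit sets of its simply degenerate surface subgroups. First I would recall the structure theory of analytically finite Kleinian groups with connected limit set: by work of Thurston and Bonahon (tameness) together with the decomposition of such a group along its ends, the manifold $M = \Hyp^3/\Gamma$ has a compact core, and each end of $M$ is either geometrically finite or simply degenerate, with the degenerate ends carried by incompressible subsurfaces. The geometrically finite ends contribute pieces of $\Lambda(\Gamma)$ whose local connectivity is classical (Floyd \cite{Floyd}, via the Carathéodory extension theorem applied to the components of $\Omega_\Gamma$). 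So the only obstruction to local connectivity of $\Lambda(\Gamma)$ comes from the degenerate directions.

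The forward implication ($\Lambda(\Gamma)$ locally connected $\Rightarrow$ every simply degenerate surface subgroup has locally connected limit set) is the easy direction: if $H \subset \Gamma$ is a simply degenerate surface subgroup without accidental parabolics, then $\Lambda(H) \subset \Lambda(\Gamma)$ is a closed subset, and one shows it is a \emph{retract} in an appropriate topological sense — more precisely, one uses the nearest-point projection from $\Lambda(\Gamma)$ (or from the convex hull) onto the convex hull of $H$ to produce a continuous surjection $\Lambda(\Gamma) \to \Lambda(H)$; since a continuous image of a locally connected compact metric space under a monotone (or closed) map is locally connected, local connectivity passes to $\Lambda(H)$. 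The delicate point here is the no-accidental-parabolics hypothesis, which guarantees $\Lambda(H)$ sits inside $\Lambda(\Gamma)$ as a tame sphere-filling-type object rather than a pathological cross-section.

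For the converse — which is the substantive direction and the one that makes Theorem \ref{and-mask} useful in conjunction with Theorem \ref{ctsurf} — I would cover $\Lambda(\Gamma)$ by finitely many pieces: the closures of the components of $\Omega_\Gamma$ (handled by Floyd/Carathéodory, giving locally connected boundaries), together with limit sets of the degenerate surface subgroups associated to the degenerate ends. Each such degenerate end is carried by a subsurface group $H$; if $H$ is simply degenerate with locally connected limit set (the hypothesis), one is done for that piece, and if $H$ is doubly degenerate one further decomposes it — but in the Anderson–Maskit setting the relevant ends of a group with \emph{connected} (hence non-full) limit set are simply degenerate. One then invokes a gluing lemma: a connected compact metric space that is a finite union of locally connected closed subsets, glued along locally connected closed intersections, is itself locally connected. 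Verifying the hypotheses of this gluing lemma — in particular that the overlaps between a degenerate piece and the rest of $\Lambda(\Gamma)$ are themselves well-behaved (they are images of arcs or laminar pieces under the relevant Cannon-Thurston-type maps) — is where the real work lies.

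\medskip
\noindent\textbf{Main obstacle.} The hardest step is controlling the \emph{interface} between the degenerate pieces and the geometrically finite pieces of $\Lambda(\Gamma)$: one must show that $\Lambda(\Gamma)$ genuinely decomposes as a locally finite union of these pieces with controlled overlaps, so that the topological gluing lemma applies. This requires the tameness and end-structure theory (so that there are only finitely many ends and each is carried by an essential subsurface) and a careful analysis of how the limit set of a subgroup embeds in the ambient limit set — precisely the kind of Cannon-Thurston-map control that makes this theorem a natural companion to Theorem \ref{ctsurf}.
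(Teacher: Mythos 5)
First, note that the paper offers no proof of Theorem \ref{and-mask}: it is quoted verbatim from Anderson--Maskit \cite{and-mask} and used as a black box to deduce Theorem \ref{lcfinal}. The actual Anderson--Maskit argument runs through the stabilizers of the components of $\Omega_\Gamma$ (function groups), their Abikoff--Maskit decomposition via the Klein--Maskit combination theorems into elementary, quasi-Fuchsian and totally degenerate factors, and a proof that local connectivity persists under these combinations; it is not the end/tameness plus ``finite gluing'' argument you sketch.

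Your sketch has concrete gaps in both directions. For the converse, $\Lambda(\Gamma)$ is \emph{not} a finite union of the pieces you name: there are in general infinitely many components of $\Omega_\Gamma$ and infinitely many $\Gamma$-conjugates of the degenerate surface subgroups, their union need not exhaust $\Lambda(\Gamma)$ (a residual set arises from infinitely nested translates in the combination structure), so a finite gluing lemma cannot apply; one needs a criterion in the spirit of Whyburn (complementary components with diameters tending to zero, or property S), and controlling those diameters is precisely the substance of the Anderson--Maskit combination argument. Worse, the step ``closures of components of $\Omega_\Gamma$ are handled by Floyd/Carath\'eodory'' is circular: the boundary of a component $\Delta$ of $\Omega_\Gamma$ is the limit set of its stabilizer, a function group that is typically \emph{not} geometrically finite and may itself contain the degenerate pieces in question (for a simply degenerate surface group, $\Omega$ is connected and $\partial\Delta$ is the whole limit set whose local connectivity is exactly what is at issue). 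For the forward direction, the nearest-point projection onto $CH(\Lambda(H))$ sends points of $\Lambda(\Gamma)\setminus\Lambda(H)$ to points of $\partial CH(\Lambda(H))$ in the interior of $\Hyp^3$, not to $\Lambda(H)$, so the claimed continuous surjection (or retraction) $\Lambda(\Gamma)\to\Lambda(H)$ is not defined; a correct argument collapses closures of complementary components of $\Lambda(H)$ (a monotone quotient in the plane) or again invokes Whyburn's criterion, and this is where the no-accidental-parabolics hypothesis actually gets used. Finally, ``connected hence non-full'' is not a valid inference: a doubly degenerate subgroup forces $\Lambda(\Gamma)=S^2$, which is connected; that case is excluded because it is trivial, not because it cannot occur.
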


Combining the remark after Theorem \ref{ctsurf} with Theorem \ref{and-mask}, we  immediately have the following affirmative answer to
Question \ref{lcqn}.

\begin{theorem}
	Let $\Gamma$ be a finitely generated Kleinian group without parabolics and with a connected limit set $\Lambda$. Then $\Lambda$ is locally connected.
	\label{lcfinal}
\end{theorem}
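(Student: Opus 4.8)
The plan is to funnel everything through the two structural inputs stated above, reducing local connectivity of $\Lambda_\Gamma$ to the case already settled by Theorem \ref{ctsurf}. First I would check that the hypotheses of Theorem \ref{and-mask} are met: a finitely generated Kleinian group is automatically analytically finite, since by the Ahlfors finiteness theorem the quotient surface $\Omega_\Gamma/\Gamma$ is of finite type; and $\Lambda = \Lambda_\Gamma$ is connected by hypothesis. Hence Theorem \ref{and-mask} applies and it suffices to show that every simply degenerate surface subgroup $H \le \Gamma$ without accidental parabolics has locally connected limit set $\Lambda_H$.

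Next I would use the parabolic-free hypothesis in two ways. First, any subgroup of a parabolic-free group is parabolic-free, so the phrase ``without accidental parabolics'' imposes no restriction at all here: \emph{every} simply degenerate surface subgroup of $\Gamma$ is eligible. Second, a parabolic-free degenerate surface subgroup is a (closed, or at worst finite-type) surface Kleinian group of exactly the sort to which Theorem \ref{ctsurf} (for closed surfaces) and its general-surface extension in \cite{mahan-split} apply. Applying that theorem to $H$, the lift of the inclusion $\til{S} \to \Hyp^3$ extends to a Cannon-Thurston map, whose boundary restriction $\partial i \colon S^1 \to S^2$ is a continuous map with image $\Lambda_H$.

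Finally I would invoke the elementary fact that a Hausdorff continuous image of $S^1$ is a locally connected continuum (the easy direction of Hahn--Mazurkiewicz): since $S^1$ is compact, connected and locally connected and $\partial i$ is a continuous surjection onto $\Lambda_H$, the set $\Lambda_H$ is itself compact, connected and locally connected. Thus every simply degenerate surface subgroup of $\Gamma$ without accidental parabolics has locally connected limit set, and Theorem \ref{and-mask} then gives that $\Lambda$ is locally connected.

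As for where the difficulty lies: in this packaging the substantive work has already been absorbed into Theorem \ref{ctsurf}, whose proof runs through the split-geometry model and the hyperbolic ladder of Sections \ref{models}--\ref{splitt}, and into the Anderson--Maskit reduction. The only points genuinely requiring care are bookkeeping ones --- verifying that Ahlfors finiteness legitimately places $\Gamma$ in the analytically finite class demanded by Theorem \ref{and-mask}, and confirming that the surface subgroups produced by that theorem, once the (vacuous) accidental-parabolic condition is removed, are precisely the degenerate surface groups for which the Cannon-Thurston map is known to exist. I do not expect any new obstacle beyond these alignment checks.
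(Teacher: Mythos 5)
Your proposal is correct and is essentially identical to the paper's argument: the paper also deduces Theorem \ref{lcfinal} by combining the remark after Theorem \ref{ctsurf} (the limit set of a degenerate surface Kleinian group is a continuous image of $S^1$, hence locally connected) with the Anderson--Maskit reduction of Theorem \ref{and-mask}. Your additional bookkeeping (Ahlfors finiteness giving analytic finiteness, and the accidental-parabolic condition being vacuous for parabolic-free $\Gamma$) is exactly the implicit content of the paper's one-line derivation.
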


Theorem \ref{ctsurf} can be extended to punctured surfaces \cite{mahan-split} and this allows Theorem \ref{lcfinal} to  be generalized to arbitrary finitely generated Kleinian groups.

\subsection{Finitely generated Kleinian groups}\label{fgkg}
In \cite{mahan-elct}, we show that the point preimages of the Cannon-Thurston map for a simply or doubly
degenerate surface Kleinian group given by Theorem \ref{ctsurf} corresponds to end-points of leaves of ending laminations. In particular, the ending lamination corresponding to a degenerate end can be recovered
from the Cannon-Thurston map. This was extended further in \cite{mahan-red} and \cite{mahan-kl} to obtain the following general version for finitely generated Kleinian groups.

\begin{theorem} \label{ct-kl} \cite{mahan-kl}
	Let $G$ be a finitely generated Kleinian group.	Let $i : \Gamma_G \rightarrow {\mathbb H}^3$ be the natural identification
	of a Cayley graph of $G$ with the orbit of a point in ${\mathbb H}^3$. 
	Further suppose that each degenerate end of $\Hyp^3/G$ can be equipped with a Minsky model \cite{minsky-elc1}.\footnote{This hypothesis is satisfied for all ends without parabolics as well as for ends incompressible away from cusps - see \cite{minsky-elc1,minsky-elc2} and \cite[Appendix]{mahan-kl}. That the hypothesis is satisfied in general would follow from unpublished work of Bowditch \cite{bowditch-elt,bowditch-endinv}.}
	Then $i$ extends continuously to a Cannon-Thurston map 
	$\hat{i}: \hhat{\Gamma_G} \rightarrow {\mathbb D}^3$,
	where $\hhat{\Gamma_G}$ denotes the (relative) hyperbolic compactification of $\Gamma_G$. 
	
	Let $\partial i$ denote the restriction of $\hat{i}$ to the boundary $\partial \Gamma_G$ of $\Gamma_G$.
	Let $E$ be a degenerate end of $N^h= {\mathbb H}^3/G$ and $\til E$ a lift of $E$ to $\til{N^h}$
	and let $M_{gf}$ be an augmented Scott core of $N^h$. Then the ending lamination $\LL_E$ for the end $E$ lifts to a lamination
	on $\til{M_{gf}} \cap \til{E}$. Each such lift $\LL$ of the ending lamination of a degenerate end defines a relation $\RR_\LL$ on the (Gromov) boundary $ \partial \widetilde{M_{gf}}$
	(or equivalently, the relative hyperbolic boundary $\partial_r \Gamma_G$ of $\Gamma_G$),
	given by
	$a\RR_\LL b$ iff $a, b$ are  end-points of a leaf of $\LL$. Let $\{ \RR_i \}$ be the entire collection of relations on $ \partial \widetilde{M_{gf}}$ obtained this way. Let $\RR$ be the transitive closure of
	the union $\bigcup_i \RR_i$. Then $\partial i(a) = \partial i(b)$ iff $a\RR b$.
\end{theorem}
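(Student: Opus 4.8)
The plan is to reduce the whole statement to a block-by-block analysis of $N^h=\Hyp^3/G$ and then to run the ladder argument of Section \ref{hyplad} in the appropriate electric (graph) metric. First I would pass from $G$ to the augmented Scott core $M_{gf}$: by tameness $N^h$ is the interior of a compact manifold, and cutting along $\partial M_{gf}$ decomposes $N^h$ into $M_{gf}$ together with its ends. The geometrically finite ends are quasi-isometrically benign — relative to cusps the inclusion there is a quasi-isometric embedding, so $\hat i$ produces no identifications away from cusps — and the relative compactification $\hhat{\Gamma_G}$ is built so that $\til{M_{gf}}$ is quasi-isometric to $\Gamma_G$ with the cusp subgroups electrified. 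Hence all of the work is local to the lifts $\til E_1,\dots,\til E_k$ of the finitely many degenerate ends, and the hypothesis that each degenerate end carries a Minsky model is exactly what one needs there.

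Second, for each degenerate end $E$ I would upgrade "carries a Minsky model" to "has split geometry" in the sense of Definition \ref{sg}. This is the heart of the existence half and is the argument sketched in Section \ref{splitt}, but carried out without the two simplifying assumptions there: using the hierarchy of Masur--Minsky \cite{masur-minsky2} and Minsky's model \cite{minsky-elc1}, one extracts a sequence of split level surfaces $S_m$ exiting $E$ along a hierarchy path, promotes them to honest split surfaces using control on the thick part, and identifies split blocks $B_i$ cut out by Margulis tubes that travel through a uniformly bounded number of blocks, allowing hanging tubes. The remaining quasiconvexity clause of Definition \ref{sg} — bounded $d_G$-diameter of convex hulls of split components — comes, exactly as before, from the Brock--Bromberg drilling theorem \cite{brock-bromberg-density} together with the fact that convex cores are filled by pleated surfaces \cite{fan1}, each of which has bounded $d_G$-diameter because its thin parts are swallowed by the electrified Margulis tubes.

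Third, with split geometry in hand I would construct the hyperbolic ladder and deduce existence of $\hat i$. Electrify all split components and Margulis tubes in $\til{N^h}$ to obtain the graph metric $d_G$; given a geodesic $\lambda\subset\til{M_{gf}}$, extend it into each $\til E_j$ by the inductive $\Phi_j$-construction of Section \ref{hyplad} (the mapping-class/Dehn-twist maps between consecutive split surfaces are $d_G$-isometries) and amalgamate with a bounded-geometry ladder over the core. Theorem \ref{lipret} and Corollary \ref{qcladder} give a coarse Lipschitz retraction onto $\LL_\lambda$, hence $d_G$-quasiconvexity, and the counting argument from the proof of Theorem \ref{ctthm2}, run in $d_G$, shows that a geodesic far from the basepoint yields a ladder far from the basepoint. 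Finally, by the split-geometry condition a $d_G$-geodesic staying far out modulo the electrified pieces is tracked, outside Margulis tubes, by a genuine hyperbolic geodesic of $\Hyp^3$, so the relative form of Lemma \ref{crit_hyp} produces the continuous extension $\hat i:\hhat{\Gamma_G}\to\D^3$.

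Fourth, the point-preimage characterization. Since $\sim_{\partial i}$ (defined by $a\sim b\iff\partial i(a)=\partial i(b)$) is automatically an equivalence relation, the inclusion $\RR\subseteq\,\sim_{\partial i}$ reduces to showing each generating relation $\RR_i\subseteq\,\sim_{\partial i}$: if $a,b$ are the ideal endpoints of a leaf $\ell$ of a lifted ending lamination, then in the model $\ell$ is a limit of lifts of the short curves $\eta_i$, the ladder $\LL_\ell$ gets crushed in $d_G$ and so has a single ideal point, forcing $\partial i(a)=\partial i(b)$; sides of complementary ideal polygons are such leaves, so they are handled identically, and transitivity of $\sim_{\partial i}$ then gives $\RR\subseteq\,\sim_{\partial i}$. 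The reverse inclusion $\sim_{\partial i}\subseteq\RR$ is where the real work lies, and I expect it to be the main obstacle. I would argue contrapositively: if $a\not\RR b$ then the geodesic $[a,b]$ is not trapped in the closure of a single complementary region of any lifted ending lamination of any $\til E_j$, so it crosses infinitely many leaves transversally and escapes deep into some end, and one must then show that $\LL_{[a,b]}$ genuinely spreads out at infinity — its two ends limit to distinct points of $S^2$ — using the fine control of the Minsky model over which curves stay bounded and which become short (morally, that the ending lamination is the exact obstruction to realizability). Making this uniform across the interplay of several degenerate ends and through the transitive closure is precisely the delicate point, as in \cite{mahan-elct, mahan-red, mahan-kl}.
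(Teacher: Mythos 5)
Your overall architecture coincides with the one this survey sketches for Theorem \ref{ctsurf} and attributes, for the general case, to \cite{mahan-elct,mahan-red,mahan-kl}: promote the Minsky model to split geometry (Definition \ref{sg}) using the Masur--Minsky hierarchy, the Brock--Bromberg drilling theorem and the fact that convex cores are filled by pleated surfaces of bounded $d_G$-diameter; electrify split components and Margulis tubes; run the ladder construction of Section \ref{hyplad} and the relative version of the criterion to get $\hat{i}$; and read off point preimages from ending laminations. So the existence half of your plan is faithful to the paper's route. Two places, however, are genuine gaps rather than mere compressions.

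First, the reduction in your opening step is not as clean as stated. For a general finitely generated $G$ the degenerate ends may be compressible (handlebody or compression-body ends, e.g.\ free Kleinian groups), and there is then no single surface subgroup over which to hang the ladder inside $\Gamma_G$; handling this, together with accidental parabolics, is exactly the new content of \cite{mahan-kl} beyond \cite{mahan-split}, and it forces the entire argument (criterion, compactification, quasiconvexity of the ladder) into the relative-hyperbolic framework of Lemma \ref{crit-relhyp} and Theorem \ref{mjpal} rather than Lemma \ref{crit_hyp}; declaring the geometrically finite part ``benign'' and the work ``local to the degenerate ends'' papers over this. Second, and more seriously, the converse inclusion $\sim_{\partial i}\ \subseteq\ \RR$ is the real content of the preimage statement, and your contrapositive plan lacks its engine. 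The argument of \cite{mahan-elct} (extended in \cite{mahan-red,mahan-kl}) is that a bi-infinite geodesic which is neither a leaf nor a diagonal of a lifted ending lamination is transverse to it and can therefore be \emph{realized} in the corresponding end by a pleated surface (Thurston's realization of laminations/geodesics not carried by the ending lamination); a realized geodesic, combined with quasiconvexity estimates in the graph metric, has two distinct accumulation points on $S^2$, so its ideal endpoints have distinct Cannon--Thurston images. Your phrase ``show that $\LL_{[a,b]}$ genuinely spreads out at infinity'' is precisely this step, and without the realization mechanism (and the bookkeeping through the transitive closure when several degenerate ends interact) the plan does not close.
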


\subsection{Primitive Stable Representations} In  \cite{minsky-primitive} Minsky
introduced   an open subset of the 
$PSL_2({\mathbb C})$ character variety for a free group,  properly containing the Schottky representations,  on which the action of
the outer automorphism group is properly discontinuous. He called these {\bf  primitive stable representations.}
Let $F_n$ be a free group of rank $n$.
An element of $F_n$ is  primitive if it is an element of a free generating set.  Let $\PP = \cdots www\cdots$ be the set of bi-infinite words with $w$ cyclically reduced  primitive.
A representation $\rho: F_n \rightarrow PSL_2(\mathbb{C})$  is primitive
stable if all elements of $\PP$ are mapped to uniform quasigeodesics in ${\mathbb{H}}^3$.

Minsky conjectured that primitive stable representations are characterized by 
the feature that every component of the ending lamination is blocking.

Using Theorem \ref{ct-kl},   Jeon and Kim \cite{woojin}, and  Jeon, Kim, Ohshika and Lecuire \cite{jkol} resolved  this conjecture.
We briefly sketch   their  argument for a degenerate free Kleinian group without parabolics.

Let  $\{ D_1, \cdots , D_n \}=\DD$  be a finite set of essential  disks on a handlebody
$H$ cutting $H$ into a 3-ball. A free generating set of 	$ F_n$  is dual to $\DD$. For a lamination $\LL$, the {\bf Whitehead graph}	$Wh(\Lambda , \DD)$ is defined as follows.  Cut $\partial H$ along $\partial \DD$ to obtain a  sphere with $2n$ holes,  labeled by $D_i^\pm$. The
vertices of $Wh(\LL , \DD)$ are the  boundary circles of $\partial H$, with an edge whenever
two circles are joined by an arc of $\LL \setminus \DD$. For the ending lamination $\LL_E$ of a degenerate free group
without parabolics, $Wh(\LL_E , \Delta)$  is connected
and has no cutpoints.

Let $\rho_E$ be the associated representation.
If  $\rho$  is not primitive stable, then there exists a sequence of primitive cyclically reduced elements $w_n$ such
that $\rho(w_n^\ast)$ is not an $n-$ quasi-geodesic. 
After passing to a subsequence, $w_n$ and hence $w_n^\ast$ converges to a bi-infinite geodesic $ w_\infty$ in the Cayley graph with two distinct end points $w_+, w_-$  in the Gromov boundary of $F_n$. The Cannon-Thurston map identifies $w_+, w_-$.
Hence by Theorem \ref{ct-kl} they are  either the end points of  a leaf of $\LL_E$  or ideal end-points of a complementary ideal
polygon of $\LL_E$. It follows therefore that $Wh( w_\infty , \DD)$  is connected
and has no cutpoints. Since $w_n$'s converge to $w_\infty$,  $Wh( w_n , \DD)$  is connected
and has no cutpoints for  large enough $n$.  A Lemma due to  Whitehead says that if $Wh( w_n , \DD)$  is connected
and has no cutpoints, then 
$w_n$ cannot be primitive, a contradiction.

\subsection{Discreteness of Commensurators}
In \cite{llr} and \cite{mahan-commens}, Theorems \ref{ctsurf} and \ref{ct-kl} are used to prove that commensurators of finitely generated,  infinite covolume, Zariski dense Kleinian
groups are discrete. The basic fact that goes into the proof is that commensurators preserve the structure of point pre-images of Cannon-Thurston maps. The point pre-image structure is known from Theorem  \ref{ct-kl}.

\subsection{Radial and horospherical limit sets}

\begin{defn}
	A point $\xi \in S^2$ is a radial or conical limit point of  Kleinian group $\G$, if for any base-point $o\in \til{M}$
	and any geodesic $\gamma_\xi$ ending at $\xi$, there exist $C \geq 0$ and infinitely many translates $g.o \in N_C(\gamma_\xi)$,
	$g \in \G$.

	A point $\xi \in S^2$ is a horospherical limit point of  Kleinian group $\G$, if for any base-point $o\in \til{M}$
	and any horoball $B_\xi$ based at $\xi$, there exist infinitely many translates $g.o \in B_\xi$,
	$g \in \G$.
	
	The collection of radial (resp. horospherical) limit points of $\G$ is called the radial (resp. horospherical) limit set of $\G$ and is denoted by $\Lambda_r$ (resp. $\Lambda_h$).
	
	The  multiple 
	limit set $\Lambda_m$ consists of those point of $S^2$ that have more than one pre-image under the Cannon-Thurston map. 
\end{defn}

Several authors \cite{kapovich-ahlfors, gerasimov, jklo} worked on the relationship between  $\Lambda_m$ and $\Lambda_r$. They concluded that the conical limit set is strictly contained in the set of 
injective points of the Cannon-Thurston map, i.e.\ $\Lambda_r \subset \Lambda_m^c$, but the inclusion is proper.

In \cite{mahan-lecuire}, we showed:
\begin{theorem} \label{hll}
	$\Lambda_m^c=\Lambda_h$. 
\end{theorem}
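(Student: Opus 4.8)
The plan is to prove the two inclusions separately, using the characterization of point-preimages of the Cannon-Thurston map from Theorem \ref{ct-kl} together with the geometry of horoballs in $\Hyp^3$. Throughout, fix a basepoint $o\in\Hyp^3$, identify $\partial\Hyp^3=S^2$, and let $\hat i$ denote the Cannon-Thurston map of Theorem \ref{ct-kl}; recall that $\xi\in\Lambda_m$ precisely when $\xi$ has at least two preimages in $\partial\Gamma_G$, which by Theorem \ref{ct-kl} happens iff $\xi$ is identified with another boundary point via the transitive closure $\RR$ of the ending-lamination relations. The geometric content we will exploit is the \emph{quasiconvexity of the hyperbolic ladder} $\LL_\lambda$ (Corollary \ref{qcladder} in the electrified/graph metric, via Section \ref{splitt}): if $p,q\in\partial\Gamma_G$ are distinct and are \emph{not} $\RR$-related, then a bi-infinite geodesic $\beta=[p,q]$ in $\Gamma_G$ has the property that its image ladder $\LL_\beta$ in $\Hyp^3$ is a quasiconvex set whose two ends land at the distinct points $\hat i(p)\ne\hat i(q)$; conversely when $p\RR q$ the ladder degenerates so that $\hat i(p)=\hat i(q)$.

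First I would prove $\Lambda_h\subseteq\Lambda_m^c$, i.e.\ a horospherical limit point is an injective point. Suppose $\xi=\hat i(p)$ is horospherical, witnessed by a horoball $B_\xi$ containing infinitely many translates $g_k\cdot o$. Assume for contradiction that $\hat i(p)=\hat i(q)=\xi$ for some $q\ne p$ in $\partial\Gamma_G$. By Theorem \ref{ct-kl}, $p$ and $q$ are joined through a finite chain of leaves of (lifts of) ending laminations; passing to one link of the chain we may assume $p,q$ are endpoints of a single leaf $\ell$ of some lift $\LL$ of an ending lamination. The geodesic $[p,q]_{\Gamma_G}$ then maps, under $i$, to a path that \emph{stays a bounded distance from $\xi$}: indeed the ending lamination sits on a degenerate end, whose model (split geometry, Section \ref{splitt}) forces the leaf to exit the end, and its image under $\hat i$ collapses to the single point $\xi$ while the geodesic itself, by the ladder estimate, is trapped in a bounded neighborhood of a geodesic ray to $\xi$. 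But a horoball $B_\xi$ based at $\xi$ contains \emph{no} bi-infinite geodesic with both endpoints equal to $\xi$ other than trivially, and more to the point, the infinitely many $g_k\cdot o\in B_\xi$ would have to escape every bounded neighborhood of the geodesic $\gamma_\xi$ (horoballs are "wider than any cone"), contradicting the ladder's boundedness together with the fact that the orbit points defining horosphericity must come from group elements moving the ladder; making this precise shows $\xi\notin\Lambda_m$, i.e.\ $\Lambda_h\subseteq\Lambda_m^c$.

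Next I would prove the reverse inclusion $\Lambda_m^c\subseteq\Lambda_h$: if $\xi=\hat i(p)$ has a \emph{unique} preimage, then $\xi$ is horospherical. The idea is to use uniqueness to upgrade the ladder quasiconvexity into a statement that the orbit map $g\mapsto g\cdot o$ "sees" $\xi$ from all directions. Concretely, pick any horoball $B_\xi$ based at $\xi$; we must produce infinitely many $g\in G$ with $g\cdot o\in B_\xi$. Since $p$ is the unique preimage of $\xi$, no nontrivial ending-lamination relation touches $p$, so for every $q\ne p$ the ladder $\LL_{[p,q]}$ genuinely separates $\hat i(p)=\xi$ from $\hat i(q)$; varying $q$ over a sequence limiting onto $p$ produces ladders that converge onto a ray landing at $\xi$, and the group elements realizing the fellow-traveling of these ladders (the $\phi_n$ and their compositions from Section \ref{hyplad}, pulled back into $G$ via the split-geometry model) provide orbit points that get arbitrarily deep into any prescribed horoball at $\xi$. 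The key technical engine here is that in the graph metric the split components are uniformly quasiconvex (Definition \ref{sg}), so the ladder-building maps are uniformly coarsely Lipschitz, and one can control exactly how far into $B_\xi$ each successive block pushes the orbit.

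I expect the \textbf{main obstacle} to be the second inclusion, $\Lambda_m^c\subseteq\Lambda_h$, specifically the step that converts "unique Cannon-Thurston preimage" into a \emph{quantitative} statement about orbit points entering arbitrary horoballs. The difficulty is that uniqueness of the preimage is a priori only a statement about the \emph{boundary} map, whereas horosphericity is a statement about the \emph{interior} orbit; bridging the two requires showing that the absence of collapsing (no ending-lamination leaf through $p$) propagates, via the split-geometry model and the ladder construction, to a genuine recurrence of the orbit along \emph{every} geodesic ray to $\xi$, not just along some sequence. This is exactly where one must use the full strength of Theorem \ref{ct-kl} — that the \emph{entire} identification pattern of $\hat i$ is accounted for by ending laminations — rather than mere existence of the Cannon-Thurston map, and where the relationship to the radial limit set (the known strict inclusion $\Lambda_r\subsetneq\Lambda_m^c$ from \cite{kapovich-ahlfors, gerasimov, jklo}) must be carefully distinguished: horospherical is strictly weaker than conical, and it is precisely this gap that $\Lambda_m^c$ fills.
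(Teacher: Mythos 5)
Your overall frame is the right one, and it matches the strategy of the source the survey cites (the survey itself states Theorem \ref{hll} without proof, attributing it to \cite{mahan-lecuire}): reduce via Theorem \ref{ct-kl} to the statement that the non-injective points of $\partial i$ are exactly the images of ideal endpoints of ending-lamination leaves (and complementary polygons), and then show that these are precisely the non-horospherical points. But both inclusions in your plan have genuine gaps at exactly the places where the real geometric work lies. For $\Lambda_h\subseteq\Lambda_m^c$: your contradiction is not established. The step asserting that the realized leaf $\til i(\ell)$ is ``trapped in a bounded neighborhood of a geodesic ray to $\xi$'' is unjustified and in general false -- both ends of $\til i(\ell)$ converge to $\xi$, but convergence in the visual boundary does not confine the leaf to a bounded neighborhood of any ray (if it did, conical-type behaviour would follow, which is precisely what one cannot assume here). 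Worse, even granting it, horosphericity only supplies orbit points $g_k\cdot o$ lying somewhere in $B_\xi$; nothing forces them to be near $\gamma_\xi$ or to ``come from group elements moving the ladder,'' so no contradiction is reached. What is actually needed is to exhibit one specific horoball at $\xi$ meeting the orbit in at most finitely many points, and that requires the model geometry of the degenerate end (split geometry/Minsky model): one must show the leaf exits the end and that sufficiently deep horoballs at $\xi$ lie in the lift of the end above a prescribed level, a region disjoint from the orbit. That quantitative statement appears nowhere in your plan.

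For $\Lambda_m^c\subseteq\Lambda_h$, the step you yourself flag is indeed missing, and the proposed mechanism cannot supply it: the ladder and the quasiconvexity of split components live in the electrified/graph metric, whereas depth in a horoball is a Busemann-function quantity for the genuine hyperbolic metric; coarse-Lipschitz control of $\Pi_\lambda$ and uniform quasiconvexity of split components give no lower bound on how far orbit points descend into $B_\xi$, so ``the $\phi_n$ pulled back into $G$ provide orbit points arbitrarily deep in any prescribed horoball'' is an assertion, not an argument. The workable route (and the one in \cite{mahan-lecuire}) is contrapositive through the end geometry: if some horoball at $\xi$ misses the orbit, deduce that the geodesic ray to $\xi$ eventually exits a degenerate end, and then use the model of the end together with the structure of the Cannon-Thurston map to conclude that $\xi$ is the image of an endpoint of a leaf of the ending lamination (or of a complementary ideal polygon), hence $\xi\in\Lambda_m$. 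As written, your text reproduces the reduction to Theorem \ref{ct-kl} but leaves both implications between ``leaf endpoint'' and ``horoball penetration'' unproved, and these implications are the content of the theorem. (A minor point: for simply degenerate groups one should also say that $\Lambda_m^c$ is taken inside the limit set, since points of $\Omega_\Gamma$ have no preimage at all.)
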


\subsection{Motions of limit sets } We discuss the following question in this section, which  paraphrases the second part of   of \cite[Problem 14]{thurston-bams}.
A detailed survey appears in \cite{mahan-cmsurvey}.

\begin{qn}\label{motionsq}
	Let $G_n$ be a sequence of Kleinian groups converging to a Kleinian group $G$. Does the corresponding dynamics of $G_n$ on the Riemann sphere $S^2$  converge to the dynamics  of $G$ on  $S^2$?
\end{qn}

To make Question \ref{motionsq} precise, we need to make sense of `convergence' both for Kleinian groups and for their dynamics on $S^2$. There are three different notions of convergence for Kleinian groups.  

\begin{definition}\label{topofconv} Let $\rho_i : H \to \pslc$ be 
	a sequence of Kleinian groups. We say that that $\rho_i$ converges to $\rho_\infty$ {\bf algebraically} if for all $h\in H$, $\rho_i(h) \to \rho_\infty (h)$.
	
	Let $\rho_j: H \rightarrow \pslc$ be a sequence of discrete, faithful representations of a finitely generated, torsion-free, nonabelian group $H$. If  $\{\rho_j(H)\}$ converges as a sequence of closed subsets of $\pslc$
	to a torsion-free, nonabelian Kleinian group
	$\G$,  $\G$ is called the 
	geometric limit of the sequence.
	
	$G_i (= \rho_i(H))$ converges {\bf strongly }
	to  $G (= \rho_\infty(H))$ if the convergence is both geometric and algebraic.
\end{definition}

Question \ref{motionsq} then splits into the following three questions.
\begin{qn}\label{q2}
	\begin{enumerate}
		\item If $G_n \to G$ geometrically, then do the corresponding limit sets converge in the Hausdorff topology on $S^2$?
		\item If $G_n \to G$ strongly then do the corresponding Cannon-Thurston maps converge uniformly?
		\item If $G_n \to G$ algebraically then do the corresponding Cannon-Thurston maps converge pointwise?
	\end{enumerate}
\end{qn}
We give the answers straight off and then proceed to elaborate.
\begin{ans}
	\begin{enumerate}
		\item The answer to Question \ref{q2} (1) is Yes.
		\item The answer to Question \ref{q2} (2) is Yes.
		\item The answer to Question \ref{q2} (3) is No, in general.
	\end{enumerate}
\end{ans}

The most general answer to Question \ref{q2} (1) is due to Evans \cite{evans1}, \cite{evans2}:

\begin{theorem} \cite{evans1}, \cite{evans2} Let $\rho_n : H \rightarrow G_n$ be a sequence of weakly type-preserving
	isomorphisms from a geometrically finite group
	$H$ to Kleinian groups $G_n$ with limit sets $\Lambda_n$, 
	such that $\rho_n$ converges algebraically to $\rho_\infty : H \rightarrow G^a_\infty$ and geometrically to $G^g_\infty$. Let $\Lambda_a$ and
	$\Lambda_g$ denote the limit sets of $G^a_\infty$ and  $G^g_\infty$.
	Then $\Lambda_n \rightarrow \Lambda_g$ in the Hausdorff metric. Further, the sequence converges strongly if and only 
	$\Lambda_n \rightarrow \Lambda_a$ in the Hausdorff metric.
	\label{evans}
\end{theorem}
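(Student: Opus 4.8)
The plan is to reduce the statement to a pair of one-sided Hausdorff-convergence assertions and then exploit the topological/dynamical relationship between the algebraic and geometric limits. First I would fix a basepoint $o \in \Hyp^3$ and recall the standard fact that $\Lambda_n$ is the set of accumulation points on $S^2$ of the orbit $G_n \cdot o$, and similarly for $\Lambda_a$ (accumulation of $G^a_\infty \cdot o$) and $\Lambda_g$ (accumulation of $G^g_\infty \cdot o$). Since $G^a_\infty \subseteq G^g_\infty$ always holds, we get $\Lambda_a \subseteq \Lambda_g$ for free; the substantive content is the convergence of the $\Lambda_n$.

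For the first assertion, $\Lambda_n \to \Lambda_g$ in the Hausdorff metric, I would argue the two inclusions separately. For $\limsup \Lambda_n \subseteq \Lambda_g$: if $\xi_n \in \Lambda_n$ with $\xi_n \to \xi$, approximate each $\xi_n$ by $g_n \cdot o$ for suitable $g_n \in G_n$ moving $o$ far toward $\xi_n$; geometric convergence lets one replace $g_n$ by an element of a fixed compact neighborhood of $G^g_\infty$ in the Chabauty/Hausdorff topology on closed subsets of $\pslc$, and a diagonal argument produces an element (or sequence of elements) of $G^g_\infty$ whose orbit accumulates at $\xi$, so $\xi \in \Lambda_g$. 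For $\Lambda_g \subseteq \liminf \Lambda_n$: given $\eta \in \Lambda_g$, pick $h_k \in G^g_\infty$ with $h_k \cdot o \to \eta$; geometric convergence gives $h_{k,n} \in G_n$ with $h_{k,n} \to h_k$ as $n \to \infty$, hence fixed points or orbit points of $h_{k,n}$ lie in $\Lambda_n$ and converge appropriately, and another diagonalization yields points of $\Lambda_n$ converging to $\eta$. The weakly type-preserving hypothesis enters here to guarantee that parabolic/loxodromic types are not lost in the limit, so that $G^g_\infty$ is genuinely discrete, torsion-free and nonabelian and the fixed-point arguments apply; this is exactly where I expect the main technical obstacle, since controlling the behaviour of the orbit near $S^2$ under the two different topologies simultaneously requires the full strength of Thurston's geometric-limit machinery together with the Anderson--Canary--style analysis of type-preserving sequences. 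Geometric finiteness of $H$ is used to ensure the relevant covering/core structure is controlled and that $\Lambda_a = \partial (\text{core})$ behaves predictably.

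For the second assertion — strong convergence iff $\Lambda_n \to \Lambda_a$ — the forward direction is immediate: strong convergence means $G^a_\infty = G^g_\infty$, so $\Lambda_a = \Lambda_g$ and the first part gives $\Lambda_n \to \Lambda_a$. For the converse I would argue contrapositively: if the convergence is algebraic but not geometric, then $G^a_\infty$ is a proper subgroup of $G^g_\infty$, and one shows $\Lambda_a \subsetneq \Lambda_g$ strictly — this uses that a geometrically finite group cannot have the same limit set as a strictly larger discrete group containing it except in degenerate cases excluded by the hypotheses (here one invokes that the covering $\Hyp^3/G^a_\infty \to \Hyp^3/G^g_\infty$ is infinite-sheeted, so there are points of $\Lambda_g$ not accumulated by $G^a_\infty \cdot o$). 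Combined with $\Lambda_n \to \Lambda_g$ from part one, the Hausdorff limit of $\Lambda_n$ is $\Lambda_g \neq \Lambda_a$, contradicting $\Lambda_n \to \Lambda_a$. The delicate point throughout is the uniformity of the approximations near the sphere at infinity; I would handle this by working in the Gromov compactification $\D^3$ and using that orbit points accumulating at a limit point must eventually enter every horoball/shadow, translating Hausdorff control on subgroups into Hausdorff control on limit sets.
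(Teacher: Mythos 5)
First, note that the paper does not prove this statement: it is quoted verbatim from Evans \cite{evans1,evans2}, so there is no in-paper argument to compare against; your proposal has to stand on its own, and as it stands it has two genuine gaps. The main one is the inclusion $\limsup_n \Lambda_n \subseteq \Lambda_g$, which is the actual content of Evans' theorem. Your compactness/diagonal argument cannot work: the elements $g_{n}\in G_n$ whose orbit points $g_n\cdot o$ approximate $\xi_n$ necessarily leave every compact subset of $\pslc$, and geometric (Chabauty) convergence gives no control whatsoever over such escaping elements, so no diagonal extraction produces elements of $G^g_\infty$ accumulating at $\xi$. If this soft argument were valid it would prove upper semicontinuity of limit sets for an arbitrary geometrically convergent sequence, making no use of weak type-preservation, of the algebraic convergence, or of geometric finiteness of $H$ --- and in that generality upper semicontinuity is false; this is exactly why the theorem is hard. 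The actual proof requires the heavy machinery you only allude to: tameness of the algebraic limit, persistence of its (possibly degenerate) ends in the geometric limit, and Canary's covering theorem applied to the covering $\Hyp^3/G^a_\infty \to \Hyp^3/G^g_\infty$, which is how one controls $\Omega_g$ versus $\Omega_n$ near infinity. (Only the easy direction, $\Lambda_g \subseteq \liminf_n \Lambda_n$ via fixed points of loxodromics, is correctly disposed of by your sketch.)

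The second gap is in the converse of the last assertion. You argue that if convergence is not strong then $G^a_\infty \subsetneq G^g_\infty$, and that the infinite-sheeted covering forces $\Lambda_a \subsetneq \Lambda_g$. That implication is false as a general principle: an infinite-index subgroup can have the same limit set as the ambient group (a fiber subgroup of a closed hyperbolic $3$-manifold fibering over the circle has limit set all of $S^2$, as in Theorem \ref{ctthm} of this very paper). Moreover the algebraic limit $G^a_\infty$ need not be geometrically finite even though $H$ is, so you cannot invoke any rigidity of geometrically finite limit sets for it. Ruling out the possibility $\Lambda_a = \Lambda_g$ with $G^a_\infty$ a proper subgroup of $G^g_\infty$ is again a substantive step (covering theorem plus tameness of $G^a_\infty$, using that the groups have infinite covolume), not a formal consequence of properness of the inclusion. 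So the overall architecture (two one-sided inclusions, then the strong-convergence equivalence) is reasonable, but both of the steps that carry the real weight are asserted rather than proved, and the justifications offered for them would fail.
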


The answer to Question \ref{q2} (2) is due to the author and Series \cite{mahan-series2} in the case that $H=\pi_1(S)$ for a closed surface $S$ of genus greater than one. This can be generalized to arbitrary finitely generated Kleinian groups as in \cite{mahan-cmsurvey}:

\begin{theorem}
	Let $H$ be a fixed group and $\rho_n (H ) = \G_n$ be a sequence
	of geometrically finite Kleinian groups converging strongly to a Kleinian group $\G$. 
	Let $M_n$ and $M_\infty$ be the corresponding hyperbolic manifolds. Let $K$ be a fixed complex with fundamental group $H$.
	
	Consider
	embeddings $\phi_n : K \rightarrow M_n, n = 1, \cdots , \infty$
	such that the maps $\phi_n$ are homotopic to each other by uniformly bounded homotopies (in the geometric limit).  
	Then Cannon-Thurston maps for  $\til{\phi_n}$ exist and converge uniformly to the Cannon-Thurston map for 
	$\til{\phi_\infty}$.
\end{theorem}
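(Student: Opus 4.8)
The plan is to prove convergence of Cannon-Thurston maps under strong convergence by combining the hyperbolic-ladder machinery with the compactness afforded by strong (= algebraic $+$ geometric) convergence. The key structural input is that the existence proof of Cannon-Thurston maps sketched above (Theorem~\ref{ctsurf} and the split-geometry discussion) is \emph{quantitative}: the proper function $M(N)$ controlling where a geodesic $\lambda$ outside an $N$-ball goes only depends on the hyperbolicity/quasiconvexity constants of the model manifold, the bi-Lipschitz constant $L$ of the split-geometry model, the injectivity-radius lower bound, and the $d_G$-diameter bound of split components. So the heart of the matter is to show that, for a strongly convergent sequence $\Gamma_n \to \Gamma$, all of these constants can be chosen \emph{uniformly} in $n$.

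First I would set up the comparison carefully. Fix the complex $K$ with $\pi_1 K = H$ and the embeddings $\phi_n : K \to M_n$ homotopic by uniformly bounded homotopies in the geometric limit $M_\infty^g$ (note that the geometric limit may be bigger than the algebraic limit $M_\infty$, but $M_\infty$ covers $M_\infty^g$). Lift to $\til\phi_n : \til K \to \Hyp^3$ and normalize so that $\til\phi_n(\til k_0) = o$ for a fixed basepoint $o \in \Hyp^3$; algebraic convergence $\rho_n(h) \to \rho_\infty(h)$ then gives, for each $h$, convergence of orbit points $\rho_n(h)\cdot o \to \rho_\infty(h)\cdot o$, i.e.\ the orbit maps converge pointwise and (by a standard \v{S}varc--Milnor/equicontinuity argument using that $H$ is finitely generated) uniformly on balls. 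The second step is the uniformity of geometry: on any fixed compact region of $M_\infty^g$, the metrics of $M_n$ converge $C^\infty$ (definition of geometric convergence), so on any bounded neighborhood of the image of $\phi_n$ the relevant Margulis tubes, split components, and bi-Lipschitz model constants stabilize; strong convergence guarantees moreover that no new cusps or new ends appear in the limit, so the model manifolds $E_{m,n}$ for the ends of $M_n$ converge blockwise to those of $M_\infty$, with uniform $L$, uniform injectivity radius lower bound in the graph metric, and uniform $d_G$-diameter bounds. This is exactly the place where strong (rather than merely algebraic) convergence is essential: algebraically one can have split components degenerating, which is precisely why the answer to Question~\ref{q2}(3) is ``No''.

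With uniform constants in hand, the third step is to run the existence proof uniformly. For each $n$ and each geodesic $\lambda \subset \til K_n$ outside an $N$-ball about $\til k_0$, the ladder $\LL_{\lambda,n} \subset \til M_n$ is $C$-quasiconvex with $C$ independent of $n$ (Corollary~\ref{qcladder} applied in the electrified/graph metric, with uniform constants), and $\LL_{\lambda,n}$ lies outside an $M(N)$-ball about $o$ with $M(N)$ a proper function independent of $n$ (as in the proof of Theorem~\ref{ctthm2}, plus the relative-hyperbolic recovery of genuine hyperbolic geodesics from graph-metric ones, again with uniform constants). By Lemma~\ref{crit_hyp} this gives Cannon-Thurston maps $\hat\phi_n$ with a \emph{common} modulus of continuity $M(N)$. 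A family of maps $\hat\phi_n : \hhat{\Gamma_H} \to \D^3$ with a uniform modulus of continuity that converges pointwise on the dense set of rational/periodic boundary points (these are the attracting fixed points of elements $h \in H$, where $\hat\phi_n$ by construction sends the fixed point of $h$ on $\partial\Gamma_H$ to the attracting fixed point of $\rho_n(h)$, which converges by algebraic convergence) then converges \emph{uniformly} to a limit map, and one checks the limit map is the Cannon-Thurston map $\hat\phi_\infty$ by verifying it is continuous, extends $\til\phi_\infty$, and is $H$-equivariant — all of which pass to the limit from the corresponding properties of $\hat\phi_n$.

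\emph{The main obstacle} I anticipate is Step two: making precise the statement that ``the split-geometry model constants are uniform under strong convergence.'' One must show that the hierarchy-path / Minsky-model data (the pants decompositions $P_m$, the tight bottom geodesic $\{\eta_i\}$ in the curve complex, the trapped Margulis tubes $T_i$, and especially the $d_G$-diameter bound of Definition~\ref{sg}) behave continuously under geometric convergence of the manifolds — equivalently, that the end-invariants converge and the drilling-theorem / pleated-surface-filling estimates (Brock--Bromberg, \cite{fan1}) are uniform. This requires combining Evans' result (Theorem~\ref{evans}) on Hausdorff convergence of limit sets with continuity properties of the Minsky model; essentially one needs that on every fixed ``depth'' into the ends the models agree up to bounded error for large $n$, and that the errors do not accumulate — a statement whose proof is technical but for which all the ingredients (Lemma~\ref{crit_hyp}, Corollary~\ref{qcladder}, Proposition~\ref{ibdd-model}, the split-geometry analysis, and strong convergence) are already in place. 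Once that uniformity is granted, the remaining steps are the soft arguments sketched above.
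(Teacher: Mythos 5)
Your overall frame is the right one, and it is in fact the strategy of the cited source (the paper itself gives no proof of this statement; it quotes it from \cite{mahan-series2} and \cite{mahan-cmsurvey}): reduce uniform convergence to (a) a modulus function $M(N)$ in the criterion of Lemma \ref{crit_hyp} that can be chosen independently of $n$, and (b) pointwise convergence on the dense set of fixed points of elements of $H$, which follows from algebraic convergence and equivariance; equicontinuity plus dense pointwise convergence then gives uniform convergence, and the limit is identified with the Cannon-Thurston map for $\til{\phi_\infty}$ by continuity and equivariance. That soft part of your argument is correct.

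The genuine gap is that your Step two, the uniformity of the geometric constants, is not an ingredient of the proof --- it essentially \emph{is} the theorem, and as you state it it is also miscast. The approximants $\G_n$ are geometrically finite, so their ends carry no split-geometry (or bounded/i-bounded/amalgamated) models at all; there are no ``model manifolds $E_{m,n}$ for the ends of $M_n$'' to converge blockwise to the model of $M_\infty$. What the actual argument must establish is different in shape: because the convergence is strong, the geometric limit coincides with the algebraic limit, and larger and larger initial pieces of the model of the degenerate end(s) of $M_\infty$ embed uniformly bi-Lipschitzly into $M_n$ for large $n$; one must then control, uniformly in $n$, what happens to a ladder/geodesic once it leaves this embedded piece and enters the geometrically finite part of $M_n$ (convex core boundary and flaring region), so as to extract a single proper function $M(N)$ valid for all large $n$ and for $n=\infty$. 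Your proposal acknowledges this is ``the main obstacle'' and then asserts that ``all the ingredients are already in place,'' but no mechanism is given for the uniform estimate across the transition from the embedded model piece to the geometrically finite region, nor for why the $d_G$-diameter bounds of Definition \ref{sg} for the limit transfer to uniform statements about the $M_n$. Until that is supplied, the claim of a common modulus of continuity is a restatement of the conclusion rather than a proof of it. (Minor points: the dense-set step should note that when $\rho_\infty(h)$ is parabolic the attracting fixed points of $\rho_n(h)$ still converge to the parabolic fixed point, so pointwise convergence survives; and in the general finitely generated case the domain should be the relative hyperbolic compactification, with horoball penetration handled via Lemma \ref{crit-relhyp} rather than Lemma \ref{crit_hyp}.)
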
  

Finally we turn to Question \ref{q2} (3), which turns out to be the subtlest. In \cite{mahan-series1} we showed that the answer to Question \ref{q2} (3) is `Yes' if the geometric limit is geometrically finite.
We illustrate this with a concrete example due to Kerckhoff and Thurston  \cite{ kerckhoff-thurston}

\begin{theorem}
	Fix a closed hyperbolic surface $S$ and a simple closed geodesic $\sigma$ on it. Let $tw^i$ denote the automorphism of $S$ given by
	an $i$-fold Dehn twist along $\sigma$. Let $G_i$ be the quasi-Fuchsian group given by the simultaneous uniformization of $(S,tw^i(S))$. Let $G_\infty$
	denote the geometric limit of the $G_i$'s. Let
	$S_{i-}$ denote the lower boundary component of the convex core of $G_i$, $i = 1, \cdots , \infty$ (including $\infty$).
	Let $\phi_i:S \rightarrow S_{i-}$ be such that if  $0 \in {\mathbb{H}}^2 = \widetilde{S}$ denotes the origin of ${\mathbb{H}}^2$ then
	$\widetilde{\phi_i} (0)$ lies in a uniformly bounded neighborhood of $0 \in {\mathbb{H}}^3 = \widetilde{M_i}$. We also assume (using the fact that
	$M_\infty$ is a geometric limit of $M_i$'s) that $S_{i-}$'s converge geometrically to $S_{\infty -}$. Then the Cannon-Thurston maps
	for $\widetilde{\phi_i}$ converge pointwise, but not uniformly, on $\partial {\mathbb{H}}^2$ to the Cannon-Thurston map
	for $\widetilde{\phi_\infty}$.
\end{theorem}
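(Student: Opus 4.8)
The plan is to establish, in order: existence of all the Cannon--Thurston maps; pointwise convergence of the boundary restrictions $\partial\phi_i$; and failure of uniform convergence. The second item carries essentially all the weight, and is exactly where geometric finiteness of the geometric limit is used.

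\emph{Setup.} Since the lower conformal structure of $G_i=Q(S,tw^i(S))$ is the fixed $S$ while the upper one is $tw^i(S)\to[\sigma]$ in $\overline{Teich(S)}$, one first records the standard facts (Kerckhoff--Thurston \cite{kerckhoff-thurston,mahan-series1}): the algebraic limit $G_\infty^a=\rho_\infty(\pi_1(S))$ is geometrically finite with $\sigma$ an accidental parabolic; the geometric limit $G_\infty\ (=G_\infty^g)\supsetneq G_\infty^a$ is again geometrically finite, obtained by adjoining a parabolic so that the rank-one cusp of $\sigma$ becomes rank-two; and consequently $\Lambda_{G_\infty^a}\subsetneq\Lambda_{G_\infty^g}$, i.e.\ the convergence is geometric but not strong. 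The map $\phi_\infty$ factors as $S\to S_{\infty-}\hookrightarrow M_\infty$, so $\widetilde{\phi_\infty}$ is $\rho_\infty$-equivariant. For $i<\infty$, $G_i$ is quasi-Fuchsian, $\widetilde{\phi_i}$ is a quasi-isometric embedding, and $\partial\phi_i\colon S^1\to\Lambda_{G_i}$ is a homeomorphism onto the limiting Jordan curve. For $i=\infty$, $G_\infty^a$ is geometrically finite, so $\partial\phi_\infty\colon S^1\to\Lambda_{G_\infty^a}$ exists by Floyd \cite{Floyd} (see also Theorem \ref{ct-kl}), and classically it identifies precisely the two ideal endpoints of each lift of $\sigma$ to $\widetilde S=\mathbb{H}^2$, carrying such a pair to the parabolic point of the corresponding conjugate of $\rho_\infty(\sigma)$. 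Finally, the basepoint normalisation together with $\rho_i\to\rho_\infty$ and the hypothesis $S_{i-}\to S_{\infty-}$ geometrically yields $\widetilde{\phi_i}\to\widetilde{\phi_\infty}$ uniformly on compact subsets of $\mathbb{H}^2$.

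\emph{Pointwise convergence.} Fix $\xi\in S^1$ and the geodesic ray $r_\xi$ in $\mathbb{H}^2$ from $0$ to $\xi$, so $\partial\phi_i(\xi)=\lim_{t\to\infty}\widetilde{\phi_i}(r_\xi(t))$ in $\mathbb{H}^3\cup S^2$ for every $i\le\infty$. If $\xi$ is an ideal endpoint of a lift of $\sigma$, then by equivariance $\partial\phi_i(\xi)$ is a fixed point of $\rho_i(\sigma)$; since $\rho_i(\sigma)\to\rho_\infty(\sigma)$ parabolic, both its fixed points collapse to $p=\partial\phi_\infty(\xi)$, and feeding $\rho_i\to\rho_\infty$ through the conjugating element settles all such $\xi$. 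The substantive case is $\xi$ not of this type. Here one uses that only the single curve $\sigma$ degenerates, putting us in the i-bounded-geometry regime of Section \ref{ibdd}: pass to the appropriate electric metrics on domain and target --- electrify the lifts of $\sigma$ in $\mathbb{H}^2$ and the $G_i$-orbit of the Margulis tube $T_i$ about the short geodesic of $\sigma$ in $\widetilde{M_i}=\mathbb{H}^3$ --- in which the model blocks acquire uniformly bounded geometry. Then, exactly as in Theorem \ref{ctthm2} and Section \ref{ibdd}, $\widetilde{\phi_i}$ is a quasi-isometric embedding with constants independent of $i$, the ladder $\LL_{r_\xi}$ is uniformly quasiconvex (Corollary \ref{qcladder}), and $\widetilde{\phi_i}(r_\xi)$ is a uniform electric quasigeodesic ray. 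Since the tubes $T_i$ converge geometrically to the standard horoball at $p$ and their translates to the $G_\infty^g$-orbit of it, uniform quasiconvexity of the ladders together with uniform convergence on compacta forces the electric limits of $\widetilde{\phi_i}(r_\xi)$ to converge; de-electrifying --- recovering a genuine hyperbolic geodesic that tracks the electric one outside the tubes and applying the relative form of Lemma \ref{crit_hyp}, as in Section \ref{amalgeo} --- is continuous away from the collapsed parabolic directions, and since $\xi$ is not a $\sigma$-endpoint, $\partial\phi_\infty(\xi)$ lies in that locus; hence $\partial\phi_i(\xi)\to\partial\phi_\infty(\xi)$ in $S^2$. (Evans' Theorem \ref{evans} gives the a priori check that any subsequential limit lies in $\Lambda_{G_\infty^g}$.) I expect this last step --- converting the $i$-dependent quasi-isometry constants of $\widetilde{\phi_i}$ near the pinching curve into a single $i$-independent estimate --- to be the main obstacle; it is precisely where geometric finiteness of $G_\infty^g$ is indispensable, and when the geometric limit is instead degenerate the tails of the image rays can escape to genuinely different places, which is the mechanism behind the negative answer to Question \ref{q2}(3) in general.

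\emph{Failure of uniform convergence.} This part is soft. If $\partial\phi_i\to\partial\phi_\infty$ uniformly on the compact space $S^1$, then the images converge in the Hausdorff metric, i.e.\ $\Lambda_{G_i}\to\Lambda_{G_\infty^a}$. But by Evans' Theorem \ref{evans} the geometric convergence $G_i\to G_\infty$ forces $\Lambda_{G_i}\to\Lambda_{G_\infty^g}$, and $\Lambda_{G_\infty^g}\supsetneq\Lambda_{G_\infty^a}$ because the convergence is not strong; uniqueness of Hausdorff limits gives a contradiction. Concretely, equicontinuity of the family $\{\partial\phi_i\}$ fails near the ideal endpoints of the lifts of $\sigma$, where the limit sets $\Lambda_{G_i}$ develop ever finer structure approximating the extra parabolic of $G_\infty^g$.
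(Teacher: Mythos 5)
Your ``failure of uniform convergence'' argument and the case of $\xi$ an ideal endpoint of a lift of $\sigma$ are correct and essentially complete: uniform convergence on the compact set $S^1$ would force $\Lambda_{G_i}\to\partial\phi_\infty(S^1)=\Lambda_{G^a_\infty}$ in the Hausdorff metric, contradicting Theorem \ref{evans} together with the strict inclusion $\Lambda_{G^a_\infty}\subsetneq\Lambda_{G^g_\infty}$; and at $\sigma$-endpoints pointwise convergence follows from the fixed points of the loxodromics $\rho_i(g\sigma g^{-1})$ collapsing onto the parabolic fixed point of $\rho_\infty(g\sigma g^{-1})$. (Note that the survey itself does not prove this theorem; it is quoted from \cite{kerckhoff-thurston} and \cite{mahan-series1}, so the comparison is with the latter.)

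The genuine gap is exactly where you flag it, and it is worse than a missing estimate: the step you hope for cannot exist in the form you describe. A single $i$-independent properness function $M(N)$ in Lemma \ref{crit_hyp} for the maps $\widetilde{\phi_i}$ in the genuine hyperbolic metrics would give equicontinuity of $\{\partial\phi_i\}$ and hence, combined with pointwise convergence, uniform convergence --- contradicting the half of the theorem you have just proved. So uniformity can only live in the electric metrics; de-electrification is necessarily non-uniform in $i$, and the entire content of pointwise convergence at a fixed $\xi$ which is not an endpoint of a lift of $\sigma$ is a $\xi$-dependent analysis of how the image ray meets the lifts of the Margulis tube $T_i$: one must show that the tubes entered at late times by the geodesic from $\widetilde{\phi_i}(0)$ toward $\partial\phi_i(\xi)$ have small shadow on $S^2$ (using their geometric convergence to horoballs at the rank-two parabolic points of $G^g_\infty$), while the large twisting inside the finitely many tubes crossed early does not move the ideal endpoint because $\xi$ stays a definite distance from the endpoints of those particular lifts of $\sigma$. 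Your sentence ``uniform quasiconvexity of the ladders together with uniform convergence on compacta forces the electric limits of $\widetilde{\phi_i}(r_\xi)$ to converge'' would apply verbatim to the Brock examples of Theorem \ref{brockct}, where pointwise convergence fails on the set $\Xi$; so geometric finiteness of $G^g_\infty$ must enter through this shadow/penetration analysis, not merely as a license for uniform constants. That analysis is the technical heart of \cite{mahan-series1} and is absent from the proposal.
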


However, if the geometric limit is geometrically infinite, then the answer to Question \ref{q2} (3) may be negative. We illustrate this with certain examples of geometric limits constructed by
Brock in \cite{brock-itn}.

\begin{theorem} \cite{mahan-series2}\label{brockct}
	Fix a closed hyperbolic surface $S$ and a separating simple closed geodesic $\sigma$ on it, cutting $S$ up into two pieces $S_{-}$ and $S_{+}$. 
	Let $\phi$ denote an automorphism of $S$ such that $\phi |_{S_{-}}$ is the identity and $\phi |_{S_{+}} = \psi$ is a pseudo-Anosov of $S_{+}$ fixing the boundary. Let $G_i$ be the quasi-Fuchsian group given by the simultaneous uniformization of $(S,\phi^i(S))$. Let $G_\infty$
	denote the geometric limit of the $G_i$'s. Let
	$S_{i0}$ denote the lower boundary component of the convex core of $G_i$, $i = 1, \cdots , \infty$ (including $\infty$).
	Let $\phi_i:S \rightarrow S_{i0}$ be such that if  $0 \in {\mathbb{H}}^2 = \widetilde{S}$ denotes the origin of ${\mathbb{H}}^2$ then
	$\widetilde{\phi_i} (0)$ lies in a uniformly bounded neighborhood of $0 \in {\mathbb{H}}^3 = \widetilde{M_i}$. We also assume (using the fact that
	$M_\infty$ is a geometric limit of $M_i$'s) that $S_{i0}$'s converge geometrically to $S_{\infty 0}$. 
	
	Let $\Sigma$ be a complete hyperbolic structure on $S_{+}$ such that $\sigma$ is homotopic to a cusp on $\Sigma$.
	Let $\LL$ consist of pairs $(\xi_{-}, \xi )$ of ideal endpoints (on ${\mathbb{S}}^1_\infty$) of stable leaves $\lambda$ of the stable lamination of $\psi$ acting on 
	$\til{\Sigma}$. Also let $\partial \til{\mathcal{H}}$ denote the collection of ideal basepoints of horodisks given by lifts (contained in  $\til{\Sigma}$)
	of the cusp in $\Sigma$ corresponding to $\sigma$.
	Let 
	
	\begin{center}
		$\Xi = \{ \xi :$ There exists $\xi_{-}$ such that $(\xi_{-}, \xi ) \in \LL ; \xi_{-} \in  \partial \til{\mathcal{H}} \}.$
	\end{center}
	
	Let $\partial \phi_i$, $i= 1 \cdots , \infty$ denote the Cannon-Thurston maps for $\til{\phi_i}$. Then
	
	\begin{enumerate}
		\item $\partial \phi_i (\xi )$ does not converge to $\partial \phi_\infty (\xi )$ for $\xi \in \Xi$.
		\item $\partial \phi_i (\xi )$  converges to $\partial \phi_\infty (\xi )$ for $\xi \notin \Xi$.
	\end{enumerate}
\end{theorem}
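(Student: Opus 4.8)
The plan is to analyze the geometric limit $G_\infty$ very explicitly, following Brock's construction in \cite{brock-itn}. The key structural fact is that $M_\infty = \Hyp^3/G_\infty$ contains, as an embedded submanifold, a rank-one cusp corresponding to $\sigma$, and that the end of $M_\infty$ on the $S_+$-side is a simply degenerate end whose associated geometry is precisely captured by the hyperbolic structure $\Sigma$ on $S_+$ (with $\sigma$ pinched to a cusp) together with the pseudo-Anosov $\psi$: the ending lamination of this end is the stable lamination $\lambda$ of $\psi$. On the $S_-$-side, $\phi|_{S_-}$ is the identity, so that portion contributes a geometrically finite piece that varies continuously. Thus the first step is to set up the identification of $\widetilde{M_\infty}$ with a union of lifts of the thick part of $\Sigma$ together with horoballs based at the ideal endpoints $\partial\widetilde{\HH}$, and to pin down how $\widetilde{\phi_\infty}$ sits inside this picture — in particular, to show that the Cannon-Thurston map $\partial\phi_\infty$ for $\widetilde{\phi_\infty}$ exists (this follows from Theorem \ref{ctsurf}, or from its punctured-surface version applied to the $S_+$-piece) and to identify its non-injectivity set using the second part of Theorem \ref{ctsurf}: $\partial\phi_\infty(p)=\partial\phi_\infty(q)$ iff $p,q$ are endpoints of a leaf of $\lambda$ or of a complementary ideal polygon.

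For part (2), the strategy is to show that away from $\Xi$ the limiting behaviour is governed by the algebraic limit, which here \emph{does} carry the relevant dynamics. Concretely, for $\xi\notin\Xi$ I would argue that a geodesic ray in $\widetilde{S}$ toward $\xi$ has the property that its image under $\widetilde{\phi_i}$ stays, for all large $i$, a bounded distance from its image under $\widetilde{\phi_\infty}$ outside larger and larger balls; this is the content of the coarse criterion Lemma \ref{crit_hyp} applied uniformly across the sequence, combined with the strong convergence on the $S_-$-side and the fact that the Dehn-twisting along $\sigma$ in the $S_+$-direction does not affect rays landing at such $\xi$ (they eventually avoid the pinching curve). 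One then upgrades uniform coarse-closeness to pointwise convergence of the boundary values $\partial\phi_i(\xi)\to\partial\phi_\infty(\xi)$ by the standard continuity-of-boundary-extension argument, using that the relevant translates $\rho_i(g)$ converge.

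For part (1), which is the heart of the theorem, the point is that the horoball endpoints $\partial\widetilde{\HH}$ are \emph{artifacts of the geometric limit} — the cusp corresponding to $\sigma$ is present in $M_\infty$ but not in the approximating manifolds $M_i$ (which are quasi-Fuchsian, hence cusp-free), so there is no reason for the identification ``$\xi_-$ is glued to $\xi$'' to be seen at finite stages. Quantitatively, I would show that for $\xi\in\Xi$ the point $\partial\phi_\infty(\xi)$ coincides with $\partial\phi_\infty(\xi_-)$ where $\xi_-\in\partial\widetilde{\HH}$, because $(\xi_-,\xi)\in\LL$ means they are the two endpoints of a stable leaf $\lambda$ of $\psi$, hence identified by the Cannon-Thurston map via Theorem \ref{ctsurf}; but at finite stage $i$ the geodesic ray toward $\xi$ passes deeper and deeper into the region that is becoming the $\sigma$-cusp, and there $\widetilde{\phi_i}$ tracks a path whose limiting endpoint is \emph{not} $\partial\phi_\infty(\xi_-)$ but rather the image of the original (unpinched) location of $\sigma$ — a point that stays uniformly away from $\partial\phi_\infty(\xi)$. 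So $\partial\phi_i(\xi)$ is trapped near a point distinct from $\partial\phi_\infty(\xi)$, giving non-convergence. The main obstacle, and the step requiring the most care, is making this last dichotomy precise: one must produce the uniform lower bound on $d_{S^2}\big(\partial\phi_i(\xi),\partial\phi_\infty(\xi)\big)$, which means controlling the geometry of $M_i$ in the increasingly long ``tube'' that limits onto the $\sigma$-cusp, and showing that throughout this tube the Cannon-Thurston image of the ray is pulled toward the wrong side. This is exactly where one invokes the detailed model geometry of \cite{brock-itn} together with the i-bounded or split-geometry description of Section \ref{models}, so that the ladder $\LL_\lambda$ for the ray toward $\xi$ can be followed explicitly and its exit point on $S^2$ pinned down in the limit.
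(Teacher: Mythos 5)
The survey does not actually prove Theorem \ref{brockct}; it is quoted from \cite{mahan-series2}, so there is no in-paper argument to measure you against, and your outline must stand on its own. It gets the qualitative picture of Brock's example right (accidental parabolic at $\sigma$ in the algebraic limit, degenerate end on the $S_+$ side with ending lamination the stable lamination of $\psi$, geometric limit strictly larger), but there are genuine gaps. For part (2): there is no ``Dehn-twisting along $\sigma$'' in this example --- that is the Kerckhoff--Thurston case of the preceding theorem, where convergence holds at every point; here $\phi$ is a partial pseudo-Anosov, and conflating the two hides exactly what is different. More seriously, the claim that rays toward $\xi\notin\Xi$ ``eventually avoid the pinching curve'' is false: $\Xi$ omits, for instance, endpoints of generic leaves of the stable lamination, and geodesic rays toward such points travel arbitrarily deep into the $S_+$ region, whose geometry in $M_i$ degenerates with $i$. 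Convergence off $\Xi$ is not an avoidance statement; it requires uniform-in-$i$ control of the models of $M_i$ against the algebraic and geometric limit geometries (the split/i-bounded machinery of Section \ref{models} applied with constants independent of $i$), which your sketch does not supply.

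For part (1), the identification $\partial\phi_\infty(\xi)=\partial\phi_\infty(\xi_-)$ cannot be drawn from Theorem \ref{ctsurf} as stated, because the algebraic limit here has an accidental parabolic at $\sigma$, so it is not a simply or doubly degenerate closed surface group without parabolics; one needs the punctured-surface/parabolic version (cf.\ Theorem \ref{ct-kl}), together with the translation of the pair $(\xi_-,\xi)$, which lives on $\partial\widetilde{\Sigma}$ for the cusped structure on $S_+$, into $\partial\widetilde{S}$ (the leaf becomes asymptotic to a lift of $\sigma$, and $\xi_-$ corresponds to a fixed point of a conjugate of $\sigma$, sent by $\partial\phi_\infty$ to a parabolic point). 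Most importantly, the step you yourself flag as ``the main obstacle'' --- identifying the actual limit of $\partial\phi_i(\xi)$ and showing it stays uniformly away from $\partial\phi_\infty(\xi)$ --- is the substance of the theorem, not a technical afterthought: in \cite{mahan-series2} this limit is pinned down as a definite point determined by the wrapping of the geometric limit of \cite{brock-itn} (a point associated to the new cusp but differing from the algebraic-limit image), and establishing that requires following uniform ladders through the degenerating blocks of the $M_i$. As written, your argument asserts non-convergence rather than proving it, so the proposal is an accurate roadmap with the central quantitative work left undone.
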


In \cite{mahan-ohshika}, we identify the exact criteria that lead to the discontinuity phenomenon of Theorem \ref{brockct}.

\section{Gromov-Hyperbolic groups}\label{trees}

\subsection{Applications and Generalizations}\label{normaltrees}
\subsubsection{Normal subgroups and trees}
The ladder construction of Section \ref{hyplad} has been generalized considerably. We work in the context of an exact sequence $1 \to N \to G \to Q \to 1$, with $N$ hyperbolic and $G$ finitely presented. 
We observe that for the proof of Theorem \ref{ctthm2} to go through it suffices to have a qi-section of $Q$ into $G$ to provide a `coarse transversal' to flow. Such a qi-section was shown to exist by Mosher \cite{mosher-hypextns}. We then obtain the following.

\begin{theorem}\cite{mitra-ct}
	Let $G$ be a hyperbolic group and let $H$ be a hyperbolic normal subgroup
	that is normal in $G$. Then the inclusion of Cayley graphs
	$i : \Gamma_H\rightarrow\Gamma_G$ gives a Cannon-Thurston 	map $\hat{i}
	: \widehat{\Gamma_H}\to \widehat{\Gamma_G}$.
	\label{cthypext}
\end{theorem}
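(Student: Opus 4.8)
The plan is to reduce Theorem \ref{cthypext} to Theorem \ref{ctthm2} by constructing the analogue of the ``tree of spaces'' structure that was available for a surface bundle over $\R$. The key geometric input is Mosher's theorem that the exact sequence $1 \to H \to G \to Q \to 1$ with $H$ and $G$ hyperbolic admits a quasi-isometric section $s : \Gamma_Q \to \Gamma_G$; this section plays the role of the transversal $\R$ (or more precisely the tree $T$) to the ``fibers'' $H$-cosets in $G$. First I would build a metric bundle: let $T = \Gamma_Q$ and for each vertex $q \in Q$ let $Y_q$ be the coset $s(q) H \subset \Gamma_G$ with its induced path metric, which is quasi-isometric (uniformly in $q$, by left-translation) to $\Gamma_H$. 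Adjacent cosets are boundedly far apart along the section, so left-multiplication by an appropriate group element gives uniform quasi-isometries $\phi_{q,q'} : Y_q \to Y_{q'}$ between ``adjacent fibers'', exactly as the edge-to-vertex inclusions did in the bundle case. Thus $\Gamma_G$ is (quasi-isometric to) a tree of hyperbolic spaces over $T$ satisfying the qi-embedded condition, but now $T$ is a general hyperbolic space rather than $\R$ or $[0,\infty)$.

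Next I would redo the ladder construction of Section \ref{hyplad} in this setting. Given a geodesic $\lambda \subset \Gamma_H = Y_{e}$, one pushes it along the section: for a geodesic (in $T$) from the basepoint toward a vertex $q$, iterate the maps $\Phi_{q,q'}$ applied to $\lambda$ to get $\lambda_q \subset Y_q$, and set $\LL_\lambda = \bigcup_q \lambda_q$. The retraction $\Pi_\lambda$ is defined fiberwise by nearest-point projection onto $\lambda_q$ in each $Y_q$, and the proof that $\Pi_\lambda$ is coarsely Lipschitz (Theorem \ref{lipret}) goes through verbatim: one only needs to check pairs of points at distance $1$ in $\Gamma_G$, which either lie in a common fiber (Lemma \ref{npplip}) or in adjacent fibers related by one of the $\phi_{q,q'}$ (Lemma \ref{nppqi}). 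Since $\Gamma_G$ is hyperbolic, this coarse Lipschitz retraction forces $\LL_\lambda$ to be uniformly quasiconvex, giving the analogue of Corollary \ref{qcladder}.

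Finally I would verify the Cannon-Thurston criterion of Lemma \ref{crit_hyp}. Fix a basepoint $y_0 \in \Gamma_H \subset \Gamma_G$; we must find a proper function $M(N)$ so that if $\lambda$ avoids the $N$-ball about $y_0$ in $\Gamma_H$ then $\LL_\lambda$ avoids the $M(N)$-ball about $y_0$ in $\Gamma_G$. Since $\Gamma_H$ is properly embedded in $\Gamma_G$, $\lambda$ avoids a $g(N)$-ball in $\Gamma_G$ for some proper $g$. For a point $p \in \lambda_q \subset \LL_\lambda$, tracking back along the section to $Y_e$ moves $p$ a bounded distance per unit of $d_T(e,q)$, so $d_{\Gamma_G}(y_0,p) \geq g(N) - C_0 \, d_T(e,q)$; on the other hand, because the section is a qi-embedding, $d_{\Gamma_G}(y_0,p) \gtrsim d_T(e,q)$ as well (this is the one place the geometry of $T$ enters, and it replaces the trivial ``vertical distance $\geq j$'' estimate in the proof of Theorem \ref{ctthm2}). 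Taking the maximum of the two lower bounds and optimizing gives $d_{\Gamma_G}(y_0,p) \gtrsim g(N)$, hence a proper $M(N)$, and Lemma \ref{crit_hyp} then produces $\hat i$.

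\textbf{Main obstacle.} The delicate point, and the reason Mosher's theorem is essential, is the existence and quasi-isometric quality of the section $s : \Gamma_Q \to \Gamma_G$: without it there is no ``coarse transversal'' along which to flow, and the estimate $d_{\Gamma_G}(y_0,p) \gtrsim d_T(e,q)$ — needed to play off against the $g(N) - C_0 \, d_T(e,q)$ bound — would fail. A secondary technical nuisance is that because $T$ is now a genuine hyperbolic space rather than a line, one must choose, coherently, geodesics in $T$ from the basepoint to each vertex in order to define the iterated maps $\Phi$; any two such choices differ by a bounded amount by thinness of triangles in $T$, so the ladder and the retraction are well-defined up to uniformly bounded ambiguity, but this bookkeeping is where the hyperbolicity of $Q$ is quietly used.
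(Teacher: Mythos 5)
Your proposal is correct and follows essentially the same route as the paper: Mosher's qi-section supplies the coarse transversal, and the ladder construction, fiberwise nearest-point retraction, quasiconvexity, and the criterion of Lemma \ref{crit_hyp} from Section \ref{hyplad} then go through exactly as the paper indicates. The only small quibble is that the lower bound $d_{\Gamma_G}(y_0,p)\gtrsim d_{\Gamma_Q}(e,q)$ comes from the coarsely Lipschitz projection $\Gamma_G\to\Gamma_Q$ rather than from the section being a qi-embedding (whose real role is to give uniform quasi-isometries between adjacent fibers), but this does not affect the argument.
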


The ladder construction can also be generalized to the general framework of a tree of hyperbolic metric spaces.

\begin{theorem}\cite{mitra-trees}
	Let $(X,d)$ be a tree ($T$) of hyperbolic metric spaces satisfying the
	qi-embedded condition (i.e.\ edge space to vertex space inclusions are qi-embeddings).  Let $v$ be a vertex of $T$
	and $({X_v},d_v)$ be the vertex space corresponding to $v$.
	If $X$ is hyperbolic then the inclusion  $i: X_v \to X$ gives a Cannon-Thurston map $\hat{i}: \hhat{X_v} \to \hhat{X}$.
	\label{tree}
\end{theorem}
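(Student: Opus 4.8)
The strategy is to reduce Theorem~\ref{tree} to the already-established machinery of Theorem~\ref{ctthm2} and the ladder construction of Section~\ref{hyplad}, the only new feature being that the base is an arbitrary tree $T$ rather than $\R$ or $[0,\infty)$. Fix the vertex $v$ and the vertex space $X_v = Y_0$, together with a basepoint $y_0 \in Y_0$. By Lemma~\ref{crit_hyp} it suffices to produce a proper function $M:\natls \to \natls$ such that whenever a geodesic segment $\lambda \subset Y_0$ lies outside the $N$-ball about $y_0$ in $Y_0$, any geodesic in $X$ joining its endpoints lies outside the $M(N)$-ball about $y_0$ in $X$. Since $Y_0$ is properly embedded in $X$ (the edge-to-vertex inclusions being qi-embeddings, a tree of qi-embedded spaces has properly embedded vertex spaces), there is a proper function $g$ with $\lambda$ outside the $g(N)$-ball about $y_0$ in $X$.

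\textbf{The ladder over a tree.} The key step is to build the ladder $\LL_\lambda \subset X$ for a geodesic $\lambda \subset Y_0$. Orient $T$ with $v$ as root. For each vertex $w$ of $T$ at distance $n$ from $v$, let $w'$ be the parent of $w$ (the neighbor on the geodesic from $w$ to $v$); the edge $[w',w]$ carries an edge space whose inclusions into $X_{w'}$ and $X_w$ are qi-embeddings, hence coarsely invertible on their images, yielding a quasi-isometry $\phi_{w',w}: X_{w'} \to X_w$ (defined up to bounded error, with constants depending only on $X$). Composing along the unique $T$-geodesic from $v$ to $w$ gives a quasi-isometry $\Phi_w: Y_0 \to X_w$ whose constants grow with $d_T(v,w) = n$ but are uniform for fixed $n$; set $\lambda_w = \Phi_w(\lambda)$, a uniform quasigeodesic in $X_w$, and let $\LL_\lambda = \bigcup_w \lambda_w$ over all vertices $w$ of $T$. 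As in Section~\ref{hyplad}, define a retraction $\Pi_\lambda: \bigcup_w X_w \to \LL_\lambda$ by sending $y \in X_w$ to a nearest-point projection $\pi_w(y)$ of $y$ onto $\lambda_w$. The proof that $\Pi_\lambda$ is coarsely Lipschitz is exactly the argument sketched for Theorem~\ref{lipret}: it reduces to the case $d_X(x,y)=1$, and two such points either lie in a common vertex space (handled by Lemma~\ref{npplip}) or are connected across a single edge, in which case one is within bounded distance of the $\phi$-image of the other and Lemma~\ref{nppqi} applies. Crucially this argument uses only hyperbolicity of the vertex spaces, not of $X$. Hyperbolicity of $X$ then upgrades the coarse Lipschitz retract $\LL_\lambda$ to a uniformly quasiconvex subset, as in Corollary~\ref{qcladder}.

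\textbf{Completing the estimate.} With quasiconvexity of $\LL_\lambda$ in hand, I would run the estimate of Theorem~\ref{ctthm2} essentially verbatim. Let $p \in \LL_\lambda$, say $p \in \lambda_w$ with $d_T(v,w) = n$. As before there is a constant $C_0$, depending only on $X$, so that any point of $\lambda_w$ lies within $C_0$ of some point of $\lambda_{w'}$ (pull back one edge toward the root), and iterating down the $T$-geodesic from $w$ to $v$ produces a point $y \in \lambda = \lambda_0$ with $d_X(y,p) \le C_0 n$; hence $d_X(y_0,p) \ge g(N) - C_0 n$. Meanwhile, since passing one edge toward the root changes $X$-distance by a bounded amount and the spaces are properly embedded, a ``vertical'' estimate gives $d_X(y_0,p) \gtrsim n$ (more precisely, $d_X(y_0,p) \ge h(n)$ for a fixed proper function $h$ coming from proper embeddedness along the finitely many possible edge-types). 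Combining, $d_X(y_0,p) \ge \max\bigl(g(N) - C_0 n,\ h(n)\bigr)$, which is bounded below by a proper function $M(N)$ of $N$ (optimize over $n$: for small $n$ the first term is large, for large $n$ the second is). This is the desired $M(N)$, and Lemma~\ref{crit_hyp} furnishes the Cannon-Thurston map.

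\textbf{Main obstacle.} The one genuinely non-routine point, as compared with the $T = \R$ case, is the branching of $T$: the quasi-isometry constants of $\Phi_w$ degrade with $d_T(v,w)$, so one must check that the retraction $\Pi_\lambda$ remains \emph{uniformly} coarsely Lipschitz across all of $X$ despite the unbounded valence and depth of $T$. The resolution is the observation already used above --- the coarse Lipschitz estimate for $\Pi_\lambda$ is purely local (it only ever compares points at $X$-distance $1$, hence in at most two adjacent vertex spaces joined by one edge), so only the \emph{single-edge} quasi-isometry constants enter, and those are uniform by the qi-embedded condition together with local finiteness of edge types. Everything else is a faithful transcription of Section~\ref{hyplad}.
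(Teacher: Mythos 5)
There is a genuine gap, and it sits exactly where the new content of Theorem~\ref{tree} lies. Your construction of the ladder assumes that each edge yields a quasi-isometry $\phi_{w',w}\colon X_{w'}\to X_w$ between \emph{adjacent vertex spaces}, obtained by coarsely inverting the edge-to-vertex inclusions. But the hypothesis of Theorem~\ref{tree} is only the qi-embedded condition: the edge space includes as a quasi-isometric \emph{embedding}, whose image is in general a proper (quasiconvex) subset of the vertex space --- think of a graph of groups with edge groups of infinite index, e.g.\ an amalgam over a quasiconvex subgroup. Coarse invertibility then gives a quasi-isometry only between the two \emph{images} of the edge space, not between the vertex spaces, so there is no map $\Phi_w\colon Y_0\to X_w$ and no way to push the whole geodesic $\lambda$ across an edge. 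The setting in which your argument works is precisely that of Theorem~\ref{ctthm2}, where the edge-to-vertex inclusions are assumed to be quasi-isometries and the base is $\R$ or $[0,\infty)$; your proposal is essentially a transcription of that sketch, and the reduction of the general case to it fails at the first step.

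What the actual proof in \cite{mitra-trees} does differently is build the ladder inductively over the tree using nearest-point projections onto the (images of the) edge spaces: given the piece $\lambda_w\subset X_w$ of the ladder, for each edge $e$ at $w$ not pointing back toward $v$ one projects $\lambda_w$ onto the qi-embedded image of $X_e$ in $X_w$; only if this projection has diameter above a definite threshold (determined by the hyperbolicity and qi-embedding constants) does one flow a corresponding geodesic segment into the next vertex space, and otherwise the ladder terminates in that direction. The retraction onto this ladder is again shown to be coarsely Lipschitz, but this requires, in addition to Lemmas~\ref{npplip} and~\ref{nppqi}, control of how projections onto geodesics interact with projections onto quasiconvex edge-space images; that is the genuinely new ingredient relative to Section~\ref{hyplad}, and it is absent from your write-up. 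Two secondary points: your appeal to ``local finiteness of edge types'' is not a hypothesis of the theorem (uniformity of constants is part of the qi-embedded condition, and $T$ may have infinite valence), and defining $\lambda_w=\Phi_w(\lambda)$ as the image of the whole geodesic would produce quasigeodesics with constants degrading in $d_T(v,w)$, whereas the lemmas you invoke are for geodesics --- the standard fix is to take a geodesic joining the images of the endpoints, as in the paper's own inductive definition.
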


Theorem \ref{cthypext} was generalized by the author and Sardar to a purely coarse geometric context, where no group action is present. The relevant notion is that of a metric bundle for which we refer the reader to \cite{mahan-sardar}. Roughly speaking, the data of a metric bundle consists of vertex and edge spaces as in the case of a tree of spaces, with two notable changes:
\begin{enumerate}
\item The base $T$ is replaced by an arbitrary graph $B$.
\item All edge-space to vertex space maps are quasi-isometries rather than just quasi-isometric embeddings.
\end{enumerate}

With these modifications in place we have the following generalizations of Mosher's qi-section Lemma \cite{mosher-hypextns} and Theorem \ref{cthypext}:

\begin{theorem}  \label{ctbundle}\cite{mahan-sardar}
	Suppose $p:X\rightarrow B$ is a metric graph bundle satisfying the following:
	\begin{enumerate}
	\item  $B$ is a Gromov hyperbolic graph.
	\item Each  fiber $F_b$, for $b$ a vertex of $ B$  is  $\delta$-hyperbolic (for some $\delta >0$) with respect to the  path metric induced from $X$.
	\item The barycenter maps $\partial^3F_b \to F_b$, $b\in B$, sending a triple of distinct points on the boundary $\partial F_b$ to their centroid, are (uniformly, independent of $b$) coarsely  surjective.
	\item $X$ is hyperbolic.
	\end{enumerate}

	Then there is a qi-section $B \to X$.
	The inclusion map $i_b: F_b \rightarrow X$ gives a Cannon-Thurston map
	$\hat{i} : \hhat{F_b} \to \hhat{X}$.
\end{theorem}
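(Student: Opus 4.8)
The plan is to mimic the tree-of-spaces argument from Theorems~\ref{ctthm2} and~\ref{tree}, replacing the combinatorial structure of a tree by the hyperbolic base graph $B$ and replacing qi-embedded edge maps by the genuine quasi-isometries of the metric bundle. The first step is to establish the qi-section $s: B \to X$. Fix a vertex $b_0$ and a point $x_0 \in F_{b_0}$. For any geodesic $[b_0, b]_B$ in $B$ one can transport $x_0$ along the fiber-preserving quasi-isometries associated to successive edges to obtain a coarse lift of the geodesic; coarse surjectivity of the barycenter maps (hypothesis (3)) is what allows these lifts to be ``flowed'' in a uniformly controlled way, so that a coherent choice over all of $B$ exists. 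The hyperbolicity of $X$ and of the fibers is then used to show the resulting map $s$ is a quasi-isometric embedding, exactly as in Mosher's qi-section Lemma; this is the analog of the coarse transversal needed to run the ladder construction.

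The second step is to build the hyperbolic ladder $\LL_\lambda \subset X$ over a geodesic segment $\lambda \subset F_{b_0}$. Here one needs to choose, for each vertex $b$ of $B$, a geodesic $[b_0,b]_B$ (a ``geodesic spanning tree'' of $B$ based at $b_0$) and push $\lambda$ out to $F_b$ along the quasi-isometries indexed by the edges of that path, taking geodesics between the images of the endpoints as in Section~\ref{hyplad}. Setting $\LL_\lambda = \bigcup_b \lambda_b$, one constructs the retraction $\Pi_\lambda$ sending $F_b$ to $\lambda_b$ by nearest-point projection in $F_b$. The proof that $\Pi_\lambda$ is coarsely Lipschitz is the core technical point: for $d_X(x,y)=1$ one reduces to $x,y$ in the same fiber (handled by Lemma~\ref{npplip}) or in adjacent fibers (handled by Lemma~\ref{nppqi}, using that the edge maps are quasi-isometries so that one may assume $y = \phi_e(x)$ up to bounded error). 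This is formally parallel to Theorem~\ref{lipret}; the new subtlety is that when $b,b'$ are adjacent in $B$ but not on a common spanning geodesic through $b_0$, one must compare $\lambda_b$ and $\lambda_{b'}$ via the bundle maps and control the discrepancy between the two spanning paths using thin triangles in $B$ and stability of quasi-geodesics in the fibers. Coarse Lipschitzness of $\Pi_\lambda$ together with hyperbolicity of $X$ then yields, via Corollary~\ref{qcladder}, that $\LL_\lambda$ is uniformly quasiconvex in $X$.

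The third step is to verify the Cannon-Thurston criterion of Lemma~\ref{crit_hyp} for $i_b: F_b \to X$. Since the fiber is properly embedded in $X$, a geodesic $\lambda$ in $F_b$ lying outside the $N$-ball about the basepoint lies outside a $g(N)$-ball in $X$ for some proper $g$. Any point $p \in \LL_\lambda$ lies in $F_{b'}$ with $d_B(b,b') = k$, and a fiberwise ``ping-pong'' down the spanning geodesic shows $p$ is within bounded-times-$k$ of $\lambda$ in $X$, while the qi-section also forces $d_X(x_0, p) \succ k$; taking the maximum of these two lower bounds gives a proper function $M(N)$, so $\LL_\lambda$ escapes every bounded ball as $N \to \infty$. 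Combined with quasiconvexity of $\LL_\lambda$ this produces the Cannon-Thurston map $\hat i: \hhat{F_b} \to \hhat X$, and independence of the vertex $b$ follows since the fibers are pairwise quasi-isometric by the bundle structure.

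I expect the main obstacle to be the construction and control of the qi-section over a general hyperbolic base graph rather than a tree, and the attendant bookkeeping in the coarse Lipschitz estimate for $\Pi_\lambda$: over a tree the spanning geodesics are canonical and nested, whereas over $B$ one must fix a choice and absorb the failure of nesting into the error terms, using $\delta$-hyperbolicity of $B$ to bound how far apart two spanning geodesics to adjacent vertices can wander. Once hyperbolicity of $X$ is assumed (hypothesis (4)), the rest of the argument is a faithful adaptation of the tree-of-spaces proof.
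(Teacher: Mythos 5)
Your overall plan (ladder plus coarse Lipschitz retraction plus Lemma \ref{crit_hyp}) is the right family of ideas, and indeed the survey itself gives no proof of Theorem \ref{ctbundle} but defers to \cite{mahan-sardar}, whose argument is a generalization of Mosher's qi-section lemma followed by a ladder argument. However, your proposal has a genuine gap exactly at the point you flag as the "core technical point," and the fix you propose does not work. Building $\LL_\lambda$ by pushing $\lambda$ along a chosen geodesic spanning tree in $B$ makes the rungs path-dependent: for an edge $(b,b')$ of $B$ not contained in the chosen tree, $\lambda_{b'}$ is \emph{not} (coarsely) the image of $\lambda_b$ under the edge quasi-isometry, so Lemma \ref{nppqi} simply does not apply across that edge. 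Your suggestion to control the discrepancy "using thin triangles in $B$ and stability of quasi-geodesics in the fibers" fails because closeness of two paths in the base gives no control whatsoever on the composed fiber identifications along them: the monodromy around even a short loop in $B$ is an essentially arbitrary uniform quasi-isometry of the fiber, and composed along paths of length $n$ the two transports of an endpoint of $\lambda$ can diverge without bound. Consequently the rungs over adjacent vertices need not be at bounded Hausdorff distance, the coarse Lipschitz estimate for $\Pi_\lambda$ collapses, and your $\LL_\lambda$ need not be quasiconvex at all. Hyperbolicity of $X$ (hypothesis (4)) does not rescue this, since the two transported endpoints are merely endpoints of two length-$n$ paths from the same point. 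The same objection applies to your qi-section step, which is asserted rather than proved: transporting the basepoint edge-by-edge does give a coarse lift of any single geodesic of $B$, but it does not produce a coherent section over all of $B$, and "coarse surjectivity of the barycenter maps allows these lifts to be flowed" is not an argument.

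The actual route in \cite{mahan-sardar} avoids path-dependence altogether. First, hypothesis (3) is used in the Mosher-style construction of qi-sections: a point of a fiber is realized (coarsely) as the barycenter of a triple of distinct boundary points, the \emph{ideal triple} (not the point) is transported using the boundary homeomorphisms induced by the uniform fiber-identification quasi-isometries, and one takes fiberwise barycenters; since a uniform quasi-isometry of a hyperbolic space moves barycenters of ideal triangles a uniformly bounded amount, the resulting section moves bounded distance across each edge and, as $p$ is $1$-Lipschitz, is automatically a qi-section. Second, the ladder is not obtained by flowing $\lambda$: one takes qi-sections $\Sigma_1,\Sigma_2$ through the two endpoints of $\lambda$ and defines $\LL_\lambda=\bigcup_{b\in V(B)}[\Sigma_1(b),\Sigma_2(b)]_{F_b}$, a union of fiber geodesics joining the two sections. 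This is globally well defined, adjacent rungs have uniformly close endpoints, and the retraction by fiberwise nearest-point projection is then shown to be coarsely Lipschitz using Lemmas \ref{npplip} and \ref{nppqi} as in Theorem \ref{lipret}; the escape estimate and Lemma \ref{crit_hyp} then proceed much as in Theorem \ref{ctthm2}. So the missing idea in your write-up is precisely the one that makes the generalization from trees to an arbitrary hyperbolic base possible: replace "flow the geodesic along chosen paths" by "span the ladder between two qi-sections," with the barycenter hypothesis doing its work in the construction of those sections.
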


\subsection{Point pre-images: Laminations}
In Section \ref{fiber}, it was pointed out that the Cannon-Thurston map $\hat i$ identifies $p, q \in S^1$ if and only if $p, q$ are end-points of a leaf or ideal end-points of a complementary ideal polygon of the stable or unstable lamination. 

In \cite{mitra-endlam} an algebraic theory of ending laminations was developed based on 
 Thurston's theory \cite{thurstonnotes}.
The theory was developed in the context of 
a normal hyperbolic 
subgroup of a  hyperbolic group $G$ and used to give an explicit structure for the Cannon-Thurston map in Theorem \ref{cthypext}.  

\begin{defn}\cite{BFH-lam, chl07, chl08a, chl08b, kl10, kl-jlms, mitra-endlam}
An {\bf algebraic lamination}  for a hyperbolic group $H$ is an
$H$-invariant, flip invariant, closed subset $\LL \subseteq \partial^2
H =(\partial H \times \partial H \setminus \Delta)/\sim$, where $(x,y)\sim(y,x)$ denotes the flip and $\Delta$ the diagonal in $\partial H \times \partial H$.
\end{defn}

Let $$1 \rightarrow H \rightarrow G \rightarrow Q \rightarrow 1$$
be an exact sequence of finitely presented groups with $H$, $G$ hyperbolic. It follows by work of Mosher \cite{mosher-hypextns}
that $Q$  is hyperbolic.  In  \cite{mitra-endlam}, we construct algebraic  
ending laminations 
naturally parametrized by points in the boundary $\partial Q$. We describe the construction now.

Every element $g\in G$ gives an automorphism 
of $H$ sending $h$ to $g^{-1}hg$ for all $h\in H$. Let  
 $\phi_g : \VV (\Gamma_H) \to \VV (\Gamma_H)$ be the resulting bijection of the vertex set  $ \VV (\Gamma_H)$ of $\Gamma_H$. This induces  a map $\Phi_g$
sending an edge $[a,b] \subset \G_H$  to a geodesic segment
joining $\phi_g (a), \phi_g (b)$. 

For some (any) $z\in{\partial{\Gamma_Q}}$ we shall describe an algebraic ending lamination $\Lambda_z$.
Fix such a $z$ and let 
\begin{enumerate}
\item $[1,z) \subset \G_Q$ be a geodesic ray starting at 
$1$ and converging to $z\in{\partial}{\G_Q}$.
\item $\sigma: Q\to G$ be a qi section.
\item $z_n$ be the vertex on $[1,z)$ such that ${d_Q}(1,{z_n}) = n$.
\item ${g_n} = {\sigma}({z_n})$.
\end{enumerate}

For $h\in H$, let ${\SSS}_n^h$ be the $H$--invariant collection of
all free homotopy representatives (or equivalently,  shortest representatives in the
same conjugacy class) of  ${\phi}_{g_n^{-1}}(h)$ in $\G_H$.  
Identifying equivalent geodesics in $\SSS_n^h$ one obtains a subset
$\mathbb{S}_n^h$ of unordered pairs of points in 
${\hhat{{\G}_H}}$. The intersection 
with ${\partial}^{2}{H}$ of the union
of all subsequential limits (in the Hausdorff topology)
of $\{{\mathbb{S}_n^h }\}$ is denoted by 
${\Lambda}_z^h$. 

\begin{defn} 	The set of {\bf algebraic  ending laminations corresponding to
	$z\in{\partial}{\Gamma_Q}$}  is given by
	$$\lel (z) = {{\bigcup}_{h\in{H}}}{{\Lambda}_z^h}.$$
\end{defn} 

\begin{defn}The set $\Lambda$ 
	of all {\it algebraic ending laminations} is defined
	by 
	$$\lel = {{\bigcup}_{z\in{\partial}{\Gamma_Q}}}{{\lel(z)}}.$$
\end{defn}

The following was shown in \cite{mitra-endlam}:

\begin{theorem}\label{ptpre-alg}
The Cannon-Thurston map 
$\hat{i}$ of Theorem \ref{cthypext}
identifies the end-points of 
leaves of $\lel$. Conversely,
if $\hat{i}(p) = \hat{i}(q)$ for
 $p \neq q \in \partial\Gamma_H$, then some bi-infinite geodesic
having $p,q$ as its end-points is a leaf of $\lel$.
\end{theorem}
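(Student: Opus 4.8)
\textbf{Proof proposal for Theorem \ref{ptpre-alg}.}

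The plan is to prove the two directions separately, in both cases exploiting the explicit ladder construction $\LL_\lambda$ from Section \ref{hyplad} (in the exact-sequence guise: the coarse transversal is Mosher's qi-section $\sigma : Q \to G$, and the ``vertical'' quasi-isometries $\phi_n$ are the conjugation maps $\phi_{g_n^{-1}}$). The first direction is the softer one. Fix $z \in \partial \Gamma_Q$ and a leaf $(p,q)$ of $\lel(z)$, say $(p,q) \in \Lambda_z^h$ for some $h \in H$. Then there is a sequence of shortest (in their conjugacy classes) representatives of $\phi_{g_n^{-1}}(h)$ whose endpoint pairs in $\hhat{\Gamma_H}$ Hausdorff-converge to an unordered pair containing $(p,q)$. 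The key observation is that these representatives, being conjugates of a \emph{fixed} element $h$ by $g_n = \sigma(z_n)$, have their $\Gamma_G$-geodesic realizations staying at bounded distance from the copy $\sigma(z_n) \cdot \Gamma_H$ of $\Gamma_H$ ``above'' $z_n$, hence escape to infinity in $\Gamma_G$ as $z_n \to z$ (since $z_n \to z$ in $\partial \Gamma_Q$ and $\sigma$ is a qi-embedding). Applying the Cannon-Thurston existence criterion Lemma \ref{crit_hyp} together with the retraction-based argument of Theorem \ref{ctthm2}, the $\Gamma_G$-geodesic joining the two endpoints of each such representative is uniformly thin, so it lies outside larger and larger balls; combined with the fact that each such loop has bounded $\Gamma_G$-length (it is a translate of a fixed conjugacy class word whose $\Gamma_G$-length is bounded), its two endpoints must converge to a single point of $\partial \Gamma_G$. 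Passing to the Hausdorff limit, $\hat i(p) = \hat i(q)$.

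For the converse, suppose $p \neq q \in \partial \Gamma_H$ with $\hat i(p) = \hat i(q)$. Let $\lambda = (p,q)_{\Gamma_H}$ be a bi-infinite geodesic in $\Gamma_H$ with these endpoints, and build the ladder $\LL_\lambda \subset \Gamma_G$ along the transversal $\sigma(Q)$ (here the base tree/ray is a geodesic ray toward a suitable $z \in \partial \Gamma_Q$ — the content is to identify which $z$). Since $\hat i(p)=\hat i(q)$, the two ends of $\lambda$ are sent to the same boundary point of $\Gamma_G$, so by the contrapositive of the quantitative statement in Theorem \ref{ctthm2} (the proper function $M(N)$ argument), $\lambda$ \emph{cannot} escape every ball around the basepoint in $\Gamma_G$: there is $M_0$ and points of $\lambda$ returning infinitely often to the $M_0$-ball about $1 \in \Gamma_G$. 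Equivalently, the flowed images $\lambda_n$ of $\lambda$ under iterated conjugation $\phi_{g_n^{-1}}$ do not uniformly go to infinity in $\Gamma_H$ — more precisely, long subsegments of $\lambda$ get conjugated by $g_n^{-1}$ into bounded subsets of $\Gamma_H$ (bounded independently of $n$), so each such subsegment of $\phi_{g_n^{-1}}(\lambda)$ lies in a bounded region and is boundedly close to a shortest representative in its free homotopy class. The lengths of these subsegments tend to $\infty$ along a direction $z = \lim z_n$ determined by where in $\partial \Gamma_Q$ the returning behaviour is concentrated; a diagonal/Arzelà–Ascoli extraction then produces a bi-infinite geodesic through $p,q$ that appears, up to bounded Hausdorff error, as a subsequential limit of the $\mathbb S_n^h$'s for appropriate $h$, i.e. a leaf of $\lel(z)$.

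The main obstacle is the converse direction, and specifically the bookkeeping that turns ``$\lambda$ returns to a bounded ball in $\Gamma_G$ infinitely often'' into ``$\phi_{g_n^{-1}}$ carries arbitrarily long \emph{subsegments} of $\lambda$ boundedly and these limit onto a genuine leaf.'' One must control two things simultaneously: (i) that the relevant translates $g_n$ can be taken to lie on (a bounded neighbourhood of) a single geodesic ray $[1,z)$ in $\Gamma_Q$, which requires analysing how the ladder's retraction $\Pi_\lambda$ interacts with the projection to the base $Q$ and uses hyperbolicity of $Q$ (Mosher); and (ii) that the length of the boundedly-carried subsegment genuinely grows, which is where one invokes that $p \neq q$ (so $\lambda$ is a true bi-infinite geodesic, not collapsing) together with the stability/quasiconvexity of $\LL_\lambda$ from Corollary \ref{qcladder}. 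Once the escaping direction $z$ and the growing subsegments are pinned down, identifying the limit as an element of $\Lambda_z^h$ for a suitable vertex $h$ of $\Gamma_H$ near $\lambda$ is essentially a matter of matching the definition, using that shortest (free-homotopy) representatives differ from the flowed geodesic segments by a bounded amount. Throughout, the hyperbolicity of $\Gamma_H$, $\Gamma_G$ and $Q$, the qi-section of Mosher, and the coarse-Lipschitz retraction onto $\LL_\lambda$ are the only external inputs.
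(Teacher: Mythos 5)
First, a caveat: this survey does not actually prove Theorem \ref{ptpre-alg}; it only cites \cite{mitra-endlam}, so your sketch can only be measured against the argument there, whose overall skeleton (flowing shortest conjugacy representatives along Mosher's qi-section, ladder quasiconvexity, extracting a direction $z\in\partial\Gamma_Q$) you have correctly identified. The problem is that both of your key quantitative steps are wrong as stated. In the forward direction, you claim that the endpoints $a_n,b_n$ of a shortest representative of $\phi_{g_n^{-1}}(h)$ are at bounded $\Gamma_G$-distance because the loop is ``a translate of a fixed conjugacy class word whose $\Gamma_G$-length is bounded.'' What is bounded is the translation length of the conjugacy class of $h$; the based element $s_n=u_n^{-1}\phi_{g_n^{-1}}(h)u_n$ (with $u_n\in H$ uncontrolled) has $|s_n|_{\Gamma_G}$ growing roughly like $2d_Q(1,z_n)$, since its axis lies near the coset of $\Gamma_H$ over $z_n$ while $a_n,b_n$ lie in $\Gamma_H$ itself. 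Likewise, your appeal to Lemma \ref{crit_hyp} is inverted: the segments in $\SSS_n^h$ converge to the fixed leaf through $p,q$, hence do \emph{not} lie outside large balls in $\Gamma_H$, so the criterion gives nothing here. What must actually be shown is that the Gromov products $(i(a_n)\cdot i(b_n))$ blow up, e.g.\ by exhibiting a uniform quasigeodesic from $i(a_n)$ to $i(b_n)$ that climbs to the level of $z_n$, crosses a uniformly bounded segment there, and descends, staying far from the basepoint; this requires controlling the conjugator $u_n$ (equivalently, proving that the flowed image of the shortest representative at level $z_n$ is uniformly short), which is precisely the nontrivial lemma your sketch omits.

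In the converse direction, the statement you extract from $\hat i(p)=\hat i(q)$ --- that ``points of $\lambda$ return infinitely often to the $M_0$-ball about $1\in\Gamma_G$'' --- is false: by properness of $\Gamma_H\hookrightarrow\Gamma_G$ the two ends of $\lambda$ leave every $\Gamma_G$-ball, and in any case such a statement holds or fails independently of whether $\hat i$ identifies $p$ and $q$. The correct consequence of $\hat i(p)=\hat i(q)$, $p\neq q$, is the opposite phenomenon: for $a_n\to p$, $b_n\to q$ on $\lambda$, the $\Gamma_G$-geodesics $[i(a_n),i(b_n)]$ exit every ball even though $[a_n,b_n]_{\Gamma_H}\subset\lambda$ passes at bounded distance from the basepoint. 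The whole content of the converse is then to convert this into the existence of a single direction $z=\lim z_n\in\partial\Gamma_Q$ and a \emph{bounded} element $h$ such that long subsegments of $\lambda$ are, up to bounded error, shortest representatives of $\phi_{g_n^{-1}}(h)$ --- i.e.\ to locate the level $w_n$ with $|w_n|_Q\to\infty$ near which the escaping $\Gamma_G$-geodesic crosses the ladder in a uniformly bounded segment, take $h$ from that crossing, and show these data are coherent along a subsequence. You explicitly defer this as ``bookkeeping,'' but it is the mathematical core of the theorem in \cite{mitra-endlam}, not an administrative step; as written, neither implication is established.
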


\subsubsection{Finite-to-one} The classical Cannon-Thurston map of Theorem \ref{ctthm} is finite-to-one.
Swarup asked (cf.\ Bestvina's Geometric Group Theory problem list \cite[Prolem 1.20]{bestvinahp}) if the Cannon-Thurston maps of Theorem \ref{cthypext} are also finite-to-one. Kapovich and Lustig answered this in the affirmative in the following case.

\begin{theorem}\label{kl}\cite{kl-jlms}
	Let 
	$\phi\in Out(F_N)$ be a fully irreducible hyperbolic automorphism.
	Let $G_\phi=F_N\rtimes_\phi \mathbb Z$ be the associated mapping torus group. Let $\partial i$ denote the Cannon-Thurston map of Theorem \ref{cthypext} in this case.
	Then for every $z \in \partial G_\phi$, the cardinality of $(\partial i)^{-1}(z)$ is at most
$2N$.
\end{theorem}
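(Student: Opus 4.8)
The plan is to bound the number of leaves of the algebraic ending lamination $\lel(z)$ through any given pair of points on $\partial F_N$, and then invoke Theorem~\ref{ptpre-alg} (the identification of $(\partial i)^{-1}$ with endpoints of leaves of $\lel$). The hyperbolic automorphism $\phi$ is fully irreducible, so its attracting and repelling laminations $\Lambda^\pm_\phi$ are the stable/unstable laminations of $\phi$ in the sense of Bestvina--Feighn--Handel, and a key structural fact is that for a fully irreducible $\phi$ the lamination $\Lambda^+_\phi$ (equivalently the ``abstract'' lamination carried on a train track) is minimal: every leaf is dense. More to the point, the train track map $f\colon \tau\to\tau$ representing $\phi$ has the property that cancellation of turns is controlled, and a leaf of $\Lambda^+_\phi$ is determined by a bi-infinite itinerary in the train track which is essentially unique given a sufficiently long central subsegment. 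First I would set up this train-track description carefully, recording the bounded cancellation constant and the number of illegal turns.

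**Step one** (describing $\lel(z)$ concretely): In the mapping torus $G_\phi=F_N\rtimes_\phi\Z$ we have $Q=\Z$, so $\partial Q$ has exactly two points, $z_+$ and $z_-$, and the geodesic ray $[1,z_+)$ in $\Gamma_Q$ corresponds to iterating $\phi$ in the positive direction, $[1,z_-)$ to the negative direction. Unwinding the definition of $\Lambda_z^h$, the pairs $\phi_{g_n^{-1}}(h)=\phi^{\mp n}(h)$, after passing to shortest conjugacy representatives in $\Gamma_H=\Gamma_{F_N}$, Hausdorff-converge to exactly the attracting lamination $\Lambda^+_\phi$ (for $z=z_+$) or the repelling lamination $\Lambda^-_\phi$ (for $z=z_-$). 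Hence $\lel=\Lambda^+_\phi\cup\Lambda^-_\phi$, a union of \emph{two} minimal algebraic laminations. By Theorem~\ref{ptpre-alg} it therefore suffices to show: for each point $\xi\in\partial F_N$, the number of leaves of $\Lambda^+_\phi\cup\Lambda^-_\phi$ having $\xi$ as an endpoint is uniformly bounded, and moreover the total count of identifications is at most $2N$.

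**Step two** (the bound at a point): I would count the leaves of $\Lambda^+_\phi$ ending at a fixed $\xi\in\partial F_N$ using the train track. A leaf-endpoint $\xi$ corresponds to a one-sided infinite legal ray in $\tau$, and distinct leaves sharing this endpoint correspond to distinct ``continuations backwards'' that are legal and lie in the lamination. The branching of the lamination at a point is governed by the gates at the vertices of $\tau$; since $\tau$ has at most $3N-3$ edges and the number of directions (half-edges) is at most $2(3N-3)$, and legal turns are constrained by the train track structure, one gets that the number of distinct $\Lambda^+_\phi$-leaves through $\xi$ is bounded by the number of gates, hence by a constant depending only on $N$. Summing the contribution from $\Lambda^+_\phi$ and $\Lambda^-_\phi$ gives the crude bound. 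To sharpen this to exactly $2N$ one uses a counting argument due to Kapovich--Lustig: the ``illegal turns'' of $\phi$ and $\phi^{-1}$ combined, together with the Euler-characteristic/rank constraint $\sum_v(\mathrm{gates}(v)-1)\le 2N$ coming from $\mathrm{rank}(F_N)=N$, pins down the total number of coincidences of the Cannon-Thurston map over any point.

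**The main obstacle** I anticipate is the passage from ``shortest conjugacy representatives of $\phi^{\mp n}(h)$'' to a clean identification of the Hausdorff limit with the train-track lamination $\Lambda^+_\phi$, uniformly in $h\in H$. The definition of $\Lambda_z^h$ ranges over \emph{all} $h\in H$ and takes a union, so one must check that no ``extra'' leaves appear in the limit beyond those of $\Lambda^+_\phi$ --- this requires the bounded cancellation lemma for the train track map and the fact that conjugacy representatives of $\phi^n(h)$ fellow-travel long legal segments of $\Lambda^+_\phi$. Once this is established, the pure combinatorics of gates and the rank bound deliver the estimate $|(\partial i)^{-1}(z)|\le 2N$; the delicate point throughout is tracking constants so that the bound is genuinely uniform over $z\in\partial G_\phi$ and not merely finite for each $z$.
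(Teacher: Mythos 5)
The paper itself offers no proof of this statement: it is quoted from \cite{kl-jlms}, so your attempt has to be measured against the Kapovich--Lustig argument. Your top-level frame is the right one and matches how that proof is set up: with $Q=\mathbb Z$ there are exactly two boundary points $z_\pm$, Theorem \ref{ptpre-alg} reduces the fiber question for the map of Theorem \ref{cthypext} to understanding $\lel(z_\pm)$, and these are indeed governed by the attracting/repelling laminations of $\phi$. But already here there is a gap you gloss over: what is true (and is itself a substantial theorem of Kapovich--Lustig, as the paper notes immediately after the statement) is that $\lel$ equals the \emph{diagonal closure} of $\Lambda_{BFH}$, not $\Lambda^+_\phi\cup\Lambda^-_\phi$ on the nose. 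The diagonal leaves (pairs of endpoints of distinct singular leaves sharing a complementary component) are precisely what produce fibers of cardinality greater than $2$, so a count that only sees honest leaves of $\Lambda^\pm_\phi$ through a point cannot give the sharp bound; moreover, since a fiber is a set of points \emph{pairwise} joined by leaves of $\lel$, you must also rule out or control fibers mixing $\Lambda^+$- and $\Lambda^-$-identifications, which you never address.

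The more serious gap is Step two, which is where the actual content of the theorem lies. The number of leaves of $\Lambda^+_\phi$ sharing a given endpoint $\xi$ is not bounded by counting gates at the vertices of a train track $\tau$: two leaves with the same endpoint share an entire infinite ray, and the point where they diverge is not localized at any particular vertex, so legality of turns constrains local leaf structure but does not by itself bound the number of leaves containing a common ray. The inequality you invoke, $\sum_v(\mathrm{gates}(v)-1)\le 2N$, is asserted rather than proved and is not obviously the relevant quantity. In \cite{kl-jlms} the bound $2N$ comes from a different mechanism: the attracting and repelling $\mathbb R$-trees $T_\pm$ of $\phi$, the Coulbois--Hilion--Lustig map $\mathcal{Q}\colon\partial F_N\to \widehat T_\pm$, the identification of Cannon--Thurston fibers with $\mathcal{Q}$-fibers (this is where the diagonal closure is absorbed), and the index inequality of Gaboriau--Levitt/Coulbois--Hilion type bounding the relevant index by $2N-2$, which yields cardinality at most $2N$. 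Something playing the role of this index argument (applied to both $\phi$ and $\phi^{-1}$, with the diagonal leaves accounted for) is indispensable; as written, your gate count would at best give ``finite, bounded in terms of $N$,'' not the stated bound $2N$.
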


Bestvina-Feighn-Handel \cite{BFH-lam} define a closely related set $\Lambda_{BFH}$ of algebraic laminations in the case covered by Theorem \ref{kl} using train-track representatives of free group automorphisms. Any algebraic lamination
$\LL$ defines a relation $\RR_\LL$ on $\partial F_N$ by $a \RR_\LL b$ if $(a,b) \in \LL$. The  transitive closure of $\LL$ will be called its diagonal closure. In \cite{kl-jlms}, Kapovich and Lustig further show that in the case covered by Theorem \ref{kl}, $\lel$ precisely equals the diagonal closure of $\Lambda_{BFH}$.

\subsection{Relative hyperbolicity}
The notion of a Cannon-Thurston map can be extended to the context of relative hyperbolicity. This was done in \cite{mj-pal}.
Let $X$ and $Y$ be  relatively hyperbolic spaces,  hyperbolic relative to the collections $\HH_X$ and $\HH_Y$ of `horosphere-like sets' respectively. Let us denote the {\it horoballifications} of $X$ and $Y$ with respect to $\HH_X$ and $\HH_Y$ by $\GXH, \GYH$ respectively (see \cite{bowditch-relhyp} for details).  The horoballification of an $H$ in $\HH_X$ or $\HH_Y$ is denoted as $H^h$. Note that $\GXH, \GYH$ are hyperbolic.
The electrifications will be denoted as $\EXH, \EYH$.

\begin{defn}
A map $i: Y \to X$ is strictly type-preserving if 
\begin{enumerate}
\item for all $H_Y\in \HH_Y$ there exists $H_X\in \HH_X$ such that $i(H_Y)\subset H_X$, and 
\item images of distinct horospheres-like sets in $Y$   lie in distinct horosphere-like sets in $X$.
\end{enumerate}
\end{defn}
 Let $i: Y\to X$ be a strictly type-preserving proper embedding.  Then $i$  induces a proper embedding  $i_h\colon \GYH \to \GXH$. 

\begin{defn}
	A Cannon-Thurston map  exists for a strictly type-preserving inclusion $i: Y \rightarrow X$ of relatively hyperbolic
	spaces   if a Cannon-Thurston map  exists for the induced map $i_h\colon \GYH\to \GXH$.
\end{defn}

Lemma \ref{crit_hyp} generalizes to the following.

\begin{lemma}\label{crit-relhyp}
	A Cannon-Thurston map for $i: Y \to X$ exists if and only if
	there exists a non-negative proper function  $M: \natls \to \natls$  such that the
	following holds: \\
	Fix a base-point $y_0\in Y$.  Let $\hat \lambda$ in $\EYH$ be
	an electric geodesic segment  starting and ending  outside horospheres.
	If  $\lambda^b = \hat \lambda \setminus \bigcup_{K \in \HH_Y} K$
	lies outside  $B_N (y_0) \subset Y$,
	then for any electric quasigeodesic $\hat \beta$ joining the
	end points of $\hat i (\hat \lambda)$ in $\EXH$,
	$\beta ^b = \hat \beta \setminus \bigcup_{H \in \HH_X} H$ lies outside
	$B_{M(N)} (i(y_0)) \subset X$.
\end{lemma}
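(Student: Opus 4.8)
The plan is to reduce Lemma \ref{crit-relhyp} to the already-established criterion of Lemma \ref{crit_hyp}, applied not to $Y$ and $X$ themselves but to their horoballifications $\GYH$ and $\GXH$. By definition, a Cannon-Thurston map for the strictly type-preserving inclusion $i: Y \to X$ exists precisely when one exists for the induced proper embedding $i_h : \GYH \to \GXH$ of Gromov-hyperbolic spaces. So the entire statement is a matter of translating the hyperbolic criterion of Lemma \ref{crit_hyp}, phrased in terms of genuine geodesics in $\GYH$ staying far from the basepoint, into the relative language of electric geodesics in $\EYH$ whose ``non-horospherical part'' $\lambda^b$ stays far from $y_0$ in the original metric on $Y$.

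First I would recall the standard dictionary (as in \cite{bowditch-relhyp, mj-pal}) relating the three metrics: the electrified space $\EYH$, the horoballification $\GYH$, and the original space $Y$. The key facts are: (i) a geodesic in $\GYH$ between two points lying outside the horoballs can be taken to enter and leave each horoball $K^h$ it meets through points near the horosphere, and its intersection with the ``non-horoball part'' $Y \setminus \bigcup K$ tracks (up to bounded Hausdorff distance) the non-horospherical part of an electric geodesic in $\EYH$ between the same endpoints; and (ii) distances in $Y$ between points of $Y \setminus \bigcup K$ are comparable, after passing to the electrified picture and back, via a function depending only on the parameters of the relatively hyperbolic structure. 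Using (i) and (ii), the condition ``$\lambda^b$ lies outside $B_N(y_0) \subset Y$'' for the electric geodesic $\hat\lambda$ is equivalent (up to adjusting $N$ by a controlled amount) to the condition ``the $\GYH$-geodesic $\Lambda$ with the same endpoints lies outside a ball of comparable radius about $y_0$ in $\GYH$'' — the point being that inside a horoball the $\GYH$-geodesic is automatically far from $y_0$ in the $\GYH$-metric (horoballs are ``deep''), while outside the horoballs the $\GYH$-metric and the $Y$-metric agree up to bounded distortion.

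With that translation in hand, the proof is: apply Lemma \ref{crit_hyp} to $i_h : \GYH \to \GXH$. That lemma says a Cannon-Thurston map for $i_h$ exists iff there is a proper $M$ such that geodesics $[a,b]_{\GYH}$ outside $B_N(y_0)$ in $\GYH$ map to geodesics $[i_h(a),i_h(b)]_{\GXH}$ outside $B_{M(N)}(i(y_0))$ in $\GXH$. Now run the dictionary in both directions: on the $Y$-side, ``$[a,b]_{\GYH}$ outside $B_N(y_0)$'' becomes, by the discussion above, ``$\lambda^b$ outside $B_{N'}(y_0) \subset Y$'' for the corresponding electric geodesic; on the $X$-side, ``$[i_h(a),i_h(b)]_{\GXH}$ outside $B_{M(N)}(i(y_0))$'' becomes ``$\beta^b$ outside $B_{M'(N)}(i(y_0)) \subset X$'' for any electric quasigeodesic $\hat\beta$ with those endpoints — here one uses the stability of electric quasigeodesics (they lie Hausdorff-close to electric geodesics in the hyperbolic space $\EXH$) together with fact (i) on the $X$-side, so that the choice of $\hat\beta$ among electric quasigeodesics does not matter up to a bounded error absorbed into $M'$. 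Composing the three proper functions (the $Y$-side dictionary, Mahan's $M$ from Lemma \ref{crit_hyp}, and the $X$-side dictionary) yields the required proper function in the statement; conversely the same chain of equivalences run backwards gives the ``only if'' direction.

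The main obstacle I expect is the careful bookkeeping on the $X$-side, specifically checking that the criterion is insensitive to the choice of electric quasigeodesic $\hat\beta$: one must verify that passing from an actual $\GXH$-geodesic (which is what Lemma \ref{crit_hyp} directly controls) to an arbitrary electric quasigeodesic $\hat\beta$ only costs a bounded, parameter-dependent additive error in how far $\beta^b$ sits from $i(y_0)$, and that this error can be uniformly folded into the proper function $M(N)$. This rests on (a) the Morse/stability property in the hyperbolic space $\GXH$, and (b) the ``ambient quasiconvexity'' of horospheres which guarantees that excursions of $\hat\beta$ into and out of a horosphere do not drag $\beta^b$ arbitrarily close to $i(y_0)$ — both are standard in the relative hyperbolicity literature (\cite{farb-relhyp, bowditch-relhyp}) but need to be invoked with uniform constants. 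Everything else is a routine unwinding of definitions.
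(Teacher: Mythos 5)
The paper itself gives no proof of this lemma---it is quoted from \cite{mj-pal}---and your plan is exactly the route taken there: define the Cannon-Thurston map through the horoballified spaces, apply Lemma \ref{crit_hyp} to $i_h\colon \GYH \to \GXH$, and translate back and forth using the fact that electric (quasi)geodesics in $\EYH$, $\EXH$ and geodesics in $\GYH$, $\GXH$ with the same endpoints track each other off horoballs, with comparable entry and exit points. So the approach is right, but two places need tightening before the ``routine unwinding'' really closes. First, your dictionary item (ii) is false as stated: off the horoballs the metrics of $Y$ and $\GYH$ do \emph{not} agree up to bounded distortion, since attaching a horoball to a horosphere-like set $K$ creates logarithmic shortcuts between points of $K$. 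What the argument actually needs (and what is true) is uniform properness of the inclusions $Y \hookrightarrow \GYH$ and $X \hookrightarrow \GXH$, used asymmetrically: on the $Y$-side you need the properness direction (``$\lambda^b$ far from $y_0$ in $Y$ forces the corresponding points of the $\GYH$-geodesic to be far in $\GYH$''), while on the $X$-side only the trivial inequality $d_{\GXH} \leq d_X$ is needed; and since properness gives a proper rather than linear distortion function, ``adjusting $N$ by a controlled amount'' should read ``by a proper function of $N$,'' which is harmless for the statement. Second, Lemma \ref{crit_hyp} applied to $i_h$ quantifies over \emph{all} geodesic segments of $\GYH$, including those with an endpoint deep inside a horoball, whereas your translation only yields the hypothesis for segments with endpoints outside horoballs; to conclude existence of the Cannon-Thurston map you must also dispose of these segments (equivalently, verify continuity at parabolic boundary points), and this is precisely where the strictly type-preserving hypothesis enters: horoballs of $\GYH$ map into horoballs of $\GXH$ compatibly with depth, so one splits off the terminal horoball excursions of such a segment and handles them by properness of the induced map on horoballs. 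Both repairs are standard and are carried out in \cite{mj-pal}; with them added, your argument is complete.
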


Theorem \ref{tree} then generalizes to:
\begin{theorem}\cite{mj-pal}\label{mjpal}
Let $P\colon X\to T$ be a tree  of relatively hyperbolic spaces satisfying the qi-embedded condition. Assume  that
\begin{enumerate}
\item the inclusion maps of edge-spaces into vertex spaces are strictly type-preserving
\item the induced tree of electrified (coned-off) spaces continues to satisfy the qi-embedded condition
\item $X$  is 
	strongly hyperbolic relative to the family $\CC$ of maximal
cone-subtrees of horosphere-like sets.
\end{enumerate}
Then a Cannon-Thurston map exists for the inclusion of relatively hyperbolic spaces
$i\colon X_{v}\to X$, where
$({X_v},d_{X_v})$ is the relatively hyperbolic vertex space corresponding to $v$.
\end{theorem}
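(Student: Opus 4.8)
The strategy is to reduce Theorem \ref{mjpal} to the hyperbolic case (Theorem \ref{tree}) applied to the horoballified spaces, combined with the relative Cannon-Thurston criterion of Lemma \ref{crit-relhyp}. Observe first that by hypothesis (2), coning off the cone-subtrees of horosphere-like sets in each vertex and edge space produces a new tree of (hyperbolic) spaces $\EE(X,\HH,\GG,\LL)$ still satisfying the qi-embedded condition; and by hypothesis (3), $X$ is strongly hyperbolic relative to $\CC$, so the horoballification $\GG(X,\HH,\GG,\LL)$ is genuinely Gromov-hyperbolic. The intermediate object $\PEX$ is the tree of horoballified spaces: over each vertex $v$ one puts $\GG(X_v,\HH_{X_v})$, and the edge-to-vertex inclusions remain qi-embeddings because strict type-preservation (hypothesis (1)) matches horosphere-like sets compatibly, so horoballs glue to horoballs. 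Thus $\PEX$ is a tree of hyperbolic spaces satisfying the qi-embedded condition, and one checks (this is where strong relative hyperbolicity is used in an essential way) that $\PEX$ is quasi-isometric to the genuine horoballification $\GG(X,\HH,\GG,\LL)$, hence hyperbolic.

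\textbf{Key steps, in order.} First, set up the horoballified tree $\PEX$ and verify the qi-embedded condition for its edge-space inclusions using strict type-preservation. Second, prove $\PEX$ is hyperbolic: this follows from hypothesis (3) together with the fact that horoballification commutes (up to quasi-isometry) with the tree-of-spaces construction, so $\PEX$ is quasi-isometric to $\GG(X,\HH,\GG,\LL)$, which is hyperbolic by the strong relative hyperbolicity assumption. Third, apply Theorem \ref{tree} to the hyperbolic tree of hyperbolic spaces $\PEX$ with distinguished vertex $v$: this yields a Cannon-Thurston map $\hat i_h : \widehat{\GG(X_v,\HH_{X_v})} \to \widehat{\PEX} = \widehat{\GG(X,\HH,\GG,\LL)}$ for the induced map $i_h\colon \GG(X_v,\HH_{X_v}) \to \GG(X,\HH,\GG,\LL)$. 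Fourth, by the definition of a Cannon-Thurston map for a strictly type-preserving inclusion of relatively hyperbolic spaces, the existence of $\hat i_h$ \emph{is} precisely the existence of the desired Cannon-Thurston map for $i\colon X_v \to X$; equivalently one can verify directly that the criterion of Lemma \ref{crit-relhyp} holds — an electric geodesic in $\EE(X_v,\HH_{X_v})$ staying outside a large ball (modulo horospheres) lifts, via the ladder $\LL_\lambda$ built inside $\PEX$ exactly as in Section \ref{hyplad}, to a path in $\PEX$ staying far from the basepoint, and then descends to an electric quasigeodesic in $\EE(X,\HH,\GG,\LL)$ staying far from $i(y_0)$ once horoballs are stripped.

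\textbf{Main obstacle.} The crux is Step 2 — showing that the horoballified tree $\PEX$ is hyperbolic, i.e.\ that horoballification and the tree-of-spaces operation are compatible up to quasi-isometry and that hypothesis (3) is the right condition to feed in. The subtlety is that a maximal cone-subtree of horosphere-like sets $C \in \CC$ is not contained in a single vertex space; it threads through the tree $T$, and one must horoballify these global objects coherently rather than horoballifying fiberwise and hoping the pieces assemble. One therefore needs: (a) that each $C$ is quasiconvex in $\EE(X,\HH,\GG,\LL)$ (part of the meaning of strong relative hyperbolicity relative to $\CC$), so that its horoballification behaves well; and (b) a Bestvina–Feighn style combination argument showing the horoballified tree is hyperbolic given that fibers are (relatively hyperbolic, hence their horoballifications are hyperbolic) and the base tree contributes no obstruction. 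Once this compatibility is established, the Cannon-Thurston map is essentially handed over by Theorem \ref{tree}, and the remaining work — translating back through Lemma \ref{crit-relhyp} — is formal.
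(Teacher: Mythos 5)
There is a genuine gap, and it sits exactly where you flagged it: Step 2. Your reduction hinges on the claim that the fiberwise horoballified tree $\PEX$ (with vertex spaces $\GG(X_v,\HH_{X_v})$) is quasi-isometric to the horoballification of $X$ relative to $\CC$, but this is not a consequence of the hypotheses and is false in general. A maximal cone-subtree $C\in\CC$ receives a \emph{single} horoball in $\GG(X,\CC)$, so two points lying in horosphere-like sets $H_v\subset X_v$ and $H_{v'}\subset X_{v'}$ with $v\neq v'$ are at distance roughly $2\log d_C(x,y)$ there; in your fiberwise construction any path must still traverse the intermediate vertex spaces, giving distance at least comparable to $d_T(v,v')$, and since $d_C(x,y)\gtrsim d_T(v,v')$ the two metrics differ by a logarithm --- no quasi-isometry. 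A second, independent problem is Step 1: the qi-embedded condition for the horoballified tree requires the edge-space horosphere-like sets to be qi-embedded in the vertex-space ones \emph{in their intrinsic metrics}; hypotheses (1) and (2) give no such control (combinatorial horoball distances are $2\log$ of intrinsic distances, so unbounded intrinsic distortion destroys the qi-embedding). Invoking ``one checks'' and a Bestvina--Feighn style combination argument does not bridge either of these; strong relative hyperbolicity of $X$ relative to $\CC$ gives hyperbolicity of $\GG(X,\CC)$, but says nothing about compatibility of horoballification with the fiberwise tree structure.

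This is precisely why the actual argument of \cite{mj-pal} (and the strategy this survey sets up) is run in the \emph{electric} category rather than the horoballified one: hypothesis (2) is there so that the induced tree of coned-off vertex and edge spaces satisfies the qi-embedded condition, and the hyperbolic ladder $\LL_\lambda$ together with the coarsely Lipschitz retraction of Section \ref{hyplad} is built in that coned-off tree, giving quasiconvexity of the ladder in the electric metric. One then verifies the relative criterion of Lemma \ref{crit-relhyp} directly: an electric geodesic in $\EE(X_v,\HH_{X_v})$ lying outside a large ball modulo horosphere-like sets forces electric quasigeodesics in the coned-off total space to lie outside large balls modulo the sets of $\CC$, and hypothesis (3) (strong relative hyperbolicity, with its bounded-penetration properties) is what lets one pass from electric quasigeodesics back to geodesics in the horoballifications off horoballs. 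So the Cannon-Thurston map on horoballified spaces is obtained through the criterion, not by applying Theorem \ref{tree} to a horoballified tree of spaces. If you want to salvage your outline, you must either prove the quasi-isometry you assert under additional hypotheses (e.g.\ uniform undistortion of horosphere-like sets in their cone-subtrees and in vertex spaces), or abandon the fiberwise horoballification and follow the coned-off ladder route.
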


\subsection{Problems} The above survey is conditioned and limited by the author's bias on the one hand and space considerations on the other. In particular we have omitted the important work on quasigeodesic foliations by Calegari, Fenley and Frankel \cite{calegari-u,calegari-r,fenley,frankel} and the topology of ending lamination spaces by Gabai and others \cite{gabai1,gabai2,lms,pryz} as this would be beyond the scope of the present article. A more detailed survey appears in \cite{mahan-ctsurvey}. We end with some open problems.

\subsubsection{Higher dimensional Kleinian groups} As a test-case we propose:

\begin{qn}\label{ct-hid}
Let $S$ be a closed surface of genus greater than one and let $\Ga= \pi_1(S)$ act freely, properly discontinuously by isometries on $\Hyp^n$, $n>3$ (or more generally a rank one symmetric space). Does a Cannon-Thurston map exist in general?
\end{qn}

Can the small cancellation group of Baker-Riley in  \cite{br-ct} act geometrically on a rank one symmetric space thus giving a negative answer to Question \ref{ct-hid} with surface group replaced by free group? Work of Wise \cite{wise,wise-sc} guarantees linearity of such small cancellation groups.

The critical problem in trying to answer Question \ref{ct-hid} is the absence of new examples in higher dimensions. It would be good to find new examples or prove that they do not exist. A version of a question due to Kapovich \cite{kapovich-hid} makes this more precise and indicates our current state of knowledge/ignorance:

\begin{qn}\label{ct-hidsp}
	Let $S$ be a closed surface of genus greater than one and let $ \Ga $ be a discrete subgroup of $ SO(n,1)$ (or more generally a rank one  Lie group)) abstractly isomorphic to  $\pi_1(S))$ acting by isometries on $\Hyp^n$, $n>3$ (more generally the associated symmetric space) such that
	\begin{enumerate}
		\item 	 orbits are not quasiconvex,
		\item 	no element of $\Ga$ is a parabolic.
	\end{enumerate}
 Does $\rho$ factor through a representation to a simply or doubly degenerate (3-dimensional) Kleinian group followed by a deformation of $SO(3,1)$ in $SO(n,1)$?
\end{qn}

A closely related folklore question asks:
\begin{qn}\label{qfiber}
Can a  closed higher dimensional $n>3$ rank one manifold fiber? In particular over the circle?
\end{qn}

It is known, from the Chern-Gauss-Bonnet theorem that a $2n$ dimensional rank one manifold cannot fiber over the circle. Unpublished work of Kapovich \cite{kap-ns} shows that a complex hyperbolic 4-manifold cannot fiber over a 2-manifold.

\subsubsection{Surface groups in higher rank} A topic of considerable current interest is higher dimensional Teichm\"uller theory and Anosov representations of surface groups \cite{labourie,klp,ggkw}. Kapovich, Leeb and Porti give an equivalent definition of Anosov representations in purely coarse geometric terms as representations that are {\bf  asymptotic embeddings}. It will take us too far afield to define these notions precisely here.  What we will say however is that if $\rho : \pi_1(S) \to \GG$ is a discrete faithful representation into a semi-simple Lie group $\GG$ and $\GG/\PP = \BB$ is the Furstenberg boundary, then the Anosov property implies that an orbit $\rho ( \pi_1(S)).o$ `extends' to a $\rho ( \pi_1(S))-$equivariant embedding of $\partial \Ga_{\pi_1(S)} (=S^1)$ into $\BB$.  Thus the boundary map $\Delta: S^1 \to \BB$ maybe thought of as a {\it higher rank Cannon-Thurston map}. For the representation to be Anosov, $\Delta$ is thus an embedding \cite{klp}.

\begin{qn}\label{hirqn}
What class of representations do we get if we require only that $\Delta$ is continuous?
\end{qn}

Question \ref{hirqn} is basically asking for a `nice' characterization of representations that admit a higher rank Cannon-Thurston map. The core problem in addressing it again boils down to finding some rich class of examples. Question \ref{ct-hidsp} has a natural generalization to this context where we replace $SO(n,1)$ by $\GG$.

\subsubsection{Geometric group theory} As we have seen in Section \ref{normaltrees}, normal hyperbolic subgroups and trees of spaces provide  examples where there is a positive answer to Question \ref{ctq2}. Some sporadic new examples have also been found, e.g. hydra groups \cite{br-hydra}. However no systematic theory exists.
In the light of the counterexample in \cite{br-ct}, the general answer to Question \ref{ctq2} is negative. Are there necessary and/or sufficient conditions beyond Lemma \ref{crit_hyp} to guarantee existence of Cannon-Thurston maps? 

As illustrated in \cite{mitra-trees} and \cite{br-hydra}, distortion of subgroups \cite{gromov-ai} is irrelevant. Distortion captures the relationship between $d_H(1,h)$ with $d_G(1,h)$.  On the other hand Cannon-Thurston maps capture the corresponding relationship between  $d_H(1,[h_1,h_2]_H)$ with $d_G(1,[h_1,h_2]_G)$, i.e.\ existence of  Cannon-Thurston maps is equivalent to a proper embedding of `pairs of points' (coding geodesic segments).  The function associated with such a  proper embedding is closely related to the modulus of continuity of the Cannon-Thurston map \cite{br-hydra}.

\section*{Acknowledgments} I would like to thank Benson Farb and Thomas Koberda for several helpful comments on an earlier draft.

\bibliography{ct_icm}
\bibliographystyle{alpha}

\end{document}